\theoremstyle{plain}\newtheorem{theorem}[subsection]{Theorem}
\newtheorem{lemma}[subsection]{Lemma}
\newtheorem{proposition}[subsection]{Proposition}
\newtheorem{corollary}[subsection]{Corollary}
\theoremstyle{definition}
\newtheorem{definition}[subsection]{Definition}
\newtheorem{example}[subsection]{Example}
\newtheorem{remark}[subsection]{Remark}
\newcommand{\parenum}{\setenumerate[1]{label=\textbf{(\arabic*)}}}
\newcommand{\twiddle}[1]{\widetilde{#1}}
\newcommand{\E}{\mathbb{E}}
\newcommand{\N}{\mathbb{N}}
\newcommand{\R}{\mathbb{R}}
\newcommand{\ideal}[1]{\left\langle#1\right\rangle}
\newcommand{\length}[2][\sigma]{\left.L(#1\right|_{#2})}
\newcommandx{\GTri}[3][1=x,2=y,3=z]{\ensuremath{\Delta(#1,#2,#3)}}
\newcommandx{\CTri}[3][1=x,2=y,3=z]{\ensuremath{\overline{\Delta}(\overline{#1},\overline{#2},\overline{#3})}}
\definecolor{back}{RGB}{255,255,255}
\definecolor{fore}{RGB}{0,0,0}
\definecolor{title}{RGB}{255,0,90}
\definecolor{green}{rgb}{0.0, 0.5, 0.0}
\definecolor{purple}{rgb}{0.5, 0.0, 0.5}
\definecolor{bluegreen}{rgb}{0.0,0.5, 0.5}
\definecolor{orange}{rgb}{1,0.5, 0.1}
\definecolor{redgreen}{rgb}{0.5, 0.5, 0.0}
\author{Søren Poulsen}
\begin{document}
\begin{titlingpage}
	\centering
	{\scshape Aarhus Universitet \par}
	\vspace{1cm}
	{\scshape\Large Mathematical project\par}
	\vspace{1.5cm}
	{\huge\bfseries A gentle introduction to CAT($0$) spaces\par}
    \vspace{0.5cm}
	\vspace{2cm}
	{\Large\itshape Søren R. B. Poulsen\par}
    {\par}
	\vfill
	supervised by:\par
	Corina-Gabriela \textsc{Ciobotaru}

	\vfill

	{\large 4\textsuperscript{th} of June 2024\par}
\end{titlingpage}

\begin{abstract}
In this project we explore the geometry of general metric spaces, where we do not necessarily have the tools of differential geometry on our side.
Some metric spaces \((X,d)\) allow us to define geodesics, permitting us to compare geodesic triangles in \((X,d)\) to geodesic triangles in a so called model space.
In Chapters 1 and 2 we first discuss how to define the length of curves, and geodesics on \((X,d)\), and then using these to  portray the notion of ``non-positive curvature'' for a metric space.
Chapter 3 concerns itself with special cases of such non-positively curved metric spaces, called CAT($0$) spaces. These satisfy particularly nice properties, such as being uniquely geodesic, contractible, and having a convex metric, among others.

We mainly follow the book \cite{BridHaef} by Martin R. Bridson and André Haefliger, with some differences.
Firstly, we restrict ourselves to using the Euclidean plane \(\E^2\) as our model space, which is all that is necessary to define CAT($0$) spaces.
Secondly, we skip many sections of the mentioned book, as many are not relevant for our specific purposes. 
Finally, we add details to some of the proofs, which can be sparse in details or completely non-existent in the original literature.
In this way we hope to create a more streamlined, self-contained, and accessible introduction to CAT($0$) spaces.
\end{abstract}
\tableofcontents

\chapter{Length spaces}
\section{Length of curves}
Let \((X,d)\) be a metric space and let \(I=[0,a]\subseteq\R\) be a compact interval not the empty set. 
A \emph{curve} is a continuous map \(\sigma:I\to X\) joining the endpoints \(x=\sigma(0)\) and \(y=\sigma(a)\).
In our case, curves are not necessarily smooth, let alone differentiable.
Therefore, the well known notion of arc length \(\ell(\sigma)=\int_a^b|\sigma'(t)|dt\) from undergraduate geometry will not fit our purposes.
A different and more general approach is to ``approximate'' the curve by a polygonal path, the length of which can be determined by applying the metric \(d\) to each line segment.
Intuitively, the more line segments the polygonal path has, the better an approximation of the curve it is.
This is made precise by the following definition:

\begin{definition}
\label{length}
Let \(\sigma:I\to X\) be a curve from the compact interval \(I=[a,b]\subseteq \R\) to a metric space \((X,d)\).
Let \(P\) be a partition \(a=t_0<t_1<\ldots<t_n=b\) of \([a,b]\), and define \(L_P(\sigma)=\sum_{i=1}^n d(\sigma(t_{i-1}),\sigma(t_i))\).
The \emph{length} \(L(\sigma)\) of \(\sigma\) is defined as
$L(\sigma):=\sup L_P(\sigma)$
where the supremum is taken over all possible partitions \(P\) of \(I\).
A curve is said to be \textit{recitifiable} if it has finite length, i.e. \(L(\sigma)<\infty\).
\end{definition}

\medskip
As the name suggests, length provides in a natural way a notion of distance, i.e. a metric on \(X\):
\begin{definition}
Let \((X,d)\) be a metric space.
The \textit{length metric associated with $d$} is the function \(d_i:X\times X\to [0,\infty]\) on \(X\) given by
\[d_i(x,y)=\inf L(\sigma),\]
where the infimum is taken over all rectifiable curves \(\sigma\) joining \(x\) and \(y\). 
If no rectifiable curve between points \(x\) and \(y\) we set \(d_i(x,y)=\infty\).
\end{definition}

\begin{proposition}
The length metric \(d_i\) is a metric on \(X\).
\end{proposition}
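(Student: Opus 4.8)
The plan is to verify the four axioms of an (extended) metric for $d_i$, namely non-negativity, the identity of indiscernibles, symmetry, and the triangle inequality, keeping in mind that $d_i$ may take the value $\infty$. Non-negativity is immediate: every $L_P(\sigma)$ is a sum of distances $d(\sigma(t_{i-1}),\sigma(t_i))\ge 0$, so each $L(\sigma)$ is non-negative, and an infimum of non-negative numbers is non-negative.

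For symmetry I would observe that reversing a curve does not change its length. Given a rectifiable curve $\sigma:[a,b]\to X$ from $x$ to $y$, define $\bar\sigma:[a,b]\to X$ by $\bar\sigma(t)=\sigma(a+b-t)$; this is a continuous curve from $y$ to $x$, and each partition of $[a,b]$ corresponds to one whose terms in $L_P$ are merely listed in reverse order, so $L(\bar\sigma)=L(\sigma)$. Hence the two sets of lengths over which the infima defining $d_i(x,y)$ and $d_i(y,x)$ are taken coincide, giving $d_i(x,y)=d_i(y,x)$. For the identity of indiscernibles, the constant curve at $x$ has length $0$ and joins $x$ to itself, so $d_i(x,x)=0$. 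Conversely, the key observation is that for any curve $\sigma$ from $x$ to $y$ the trivial partition $\{a,b\}$ yields $L_{\{a,b\}}(\sigma)=d(x,y)$, so $L(\sigma)\ge d(x,y)$ and therefore $d_i(x,y)\ge d(x,y)$. Thus $d_i(x,y)=0$ forces $d(x,y)=0$, and since $d$ is a metric, $x=y$.

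The triangle inequality is the heart of the matter, and I expect it to be the main obstacle. The strategy is to concatenate curves and show that length is additive under concatenation. Precisely, I first establish the lemma that if $\sigma:[a,c]\to X$ is a curve and $a<b<c$, then $L(\sigma)=L(\sigma|_{[a,b]})+L(\sigma|_{[b,c]})$. The ingredient that makes this work is that refining a partition can only increase $L_P$: inserting a new point into a segment $[t_{i-1},t_i]$ replaces $d(\sigma(t_{i-1}),\sigma(t_i))$ by two distances whose sum is at least as large, by the triangle inequality for $d$. Consequently the supremum defining $L(\sigma)$ is unchanged if we restrict to partitions containing $b$; each such partition splits as a partition of $[a,b]$ together with one of $[b,c]$, and $L_P$ splits correspondingly as a sum, so additivity follows after taking suprema.

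With additivity in hand, given rectifiable curves $\sigma_1$ from $x$ to $y$ and $\sigma_2$ from $y$ to $z$, I concatenate them into a single curve $\sigma$ from $x$ to $z$, reparametrising onto one compact interval (say gluing $[a,b]$ and $[b,c]$); continuity holds because the pieces agree at the junction $y$, and additivity gives $L(\sigma)=L(\sigma_1)+L(\sigma_2)$. If $d_i(x,y)$ or $d_i(y,z)$ is infinite the inequality is trivial; otherwise, for $\varepsilon>0$ I choose $\sigma_1,\sigma_2$ with lengths within $\varepsilon$ of the respective infima, obtaining $d_i(x,z)\le L(\sigma)<d_i(x,y)+d_i(y,z)+2\varepsilon$, and letting $\varepsilon\to 0$ yields the triangle inequality. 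The delicate point to get right is precisely the additivity-under-refinement argument, together with the bookkeeping ensuring that the concatenation is a legitimate continuous curve on a compact interval.
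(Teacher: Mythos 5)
Your proof is correct and follows essentially the same route as the paper: curve reversal for symmetry, the constant curve for $d_i(x,x)=0$, and concatenation of curves for the triangle inequality. In fact your version is tighter in the two places where the paper is sparse: the trivial-partition bound $L(\sigma)\ge d(x,y)$ (which is what positive definiteness actually requires, rather than the paper's remark that all summands of $L_P(\sigma)$ are positive), and the explicit additivity-under-refinement lemma together with the $\varepsilon$-approximation argument needed to pass from an inequality between lengths of particular curves to an inequality between the infima defining $d_i$.
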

\begin{proof}
\textbf{Positive definiteness:} 
Let \(x,y\in X\) be points, and assume that \(d_i(x,y)\neq\infty\).
Then there exists at least one rectifiable curve between \(x\) and \(y\).
Consider first \(x\neq y\).
As \(d\) is a metric it is in particular positively definite, so \(d(x,y)>0\). 
This implies that all summands in the \(L_P(\sigma)\) are positive for any rectifiable curve \(\sigma\) between \(x\) and \(y\), and any partition \(P\) of \(\sigma\), so \(d_i(x,y)>0\) when \(x\neq y\).
In the case \(x=y\), the constant curve mapping all elements in \([a,b]\) to \(x\) is a rectifiable curve from \(x\) to \(y\).
For all partitions we have \(\sigma(t_{i-1})=\sigma(t_i)\), so all summands of \(L_P(\sigma)\) are 0, whereby \(L(\sigma)=0\).
This in turn means that \(0\leq d_i(x,y)\leq L(\sigma)= 0\) so \(d_i(x,y)\) is zero if and only if \(x=y\).

\textbf{Symmetry:}
For all rectifiable curves \(\sigma:[a.b]\to X\) from \(x\) to \(y\) there is a rectifiable curve \(-\sigma:[a,b]\to X\) from \(y\) to \(x\) given by \(-\sigma(t)=\sigma(a+b-t)\).
This can be considered the same curve in reverse.
This curve \(-\sigma\) obviously has the same length as \(\sigma\), so for every curve from \(x\) to \(y\) there is a curve from \(y\) to \(x\) with the same length, and vice versa.
Therefore the infimum of the length of all curves from \(y\) to \(x\) must be the infimum of the length of all curves from \(x\) to \(y\), i.e. \(d_i(x,y)=d_i(y,x)\).
This shows that \(d_i\) is symmetric.

\textbf{Triangle inequality:}
Let \(x,y\) and \(z\) be points in \(X\), let \(\sigma_{a,b}:[a,b]\to X\) be a curve from \(x\) to \(y\), and let \(\sigma_{b,c}:[b,c]\to X\) be a curve from \(y\) to \(z\).
There is then a curve \(\sigma_{a,c}:[a,c]\to X\) which is simply the concatenation of the two curves, i.e.
\[\sigma_{x,z}(t)=\begin{cases}
\sigma_{x,y}(t)     &\text{if }t\in [a,b]\\
\sigma_{y,z}(t)     &\text{if }t\in [b,c].
\end{cases}\]
We clearly have \(L(\sigma_{x,z})\leq L(\sigma_{x,y})+L(\sigma_{y,z})\), whereby \(d_i(x,z)\leq d_i(x,y)+d_i(y,z)\), proving the triangle inequality.
\end{proof}

\begin{definition}\label{def::length metric}
    If \(d=d_i\), the metric space \((X,d)=(X,d_i)\) is called a \textit{length space}. 
\end{definition}

\example{
As curves usually are visualized in \(\R^2\) or \(\R^3\), which are length spaces when equipped with the usual Euclidean metric \(d\), it might be more useful to consider a non-example which nontheless can be visualized.
The unit circle \(S^1\) with the metric \(d_E(x,y)=|x-y|\) induced by the Euclidean metric of \(\R^2\) is \emph{not} a length space for the following reason:
Consider two points \(x,y\in S^1\). 
The length metric \(d_i(x,y)\) is the arc length along the circle \(d_i(x,y)=\arccos(\ideal{x,y})\).
In particular, \(d_i(x,y)\neq d_E(x,y)\), so \((S^1,d_E)\) is \emph{not} a length space.}\label{ex_1}

\section{Geodesic spaces}
Let \((X,d)\) be a metric space and let \(I=[a,b]\subseteq\R\) be a compact interval not the empty set. 
A curve \(\sigma:I\to X\) is said to have \textit{constant speed \(\lambda\)} if \(L(\sigma|_{[t,t']})=\lambda|t-t'|\) for all \(t\leq t'\in I\).
Intuitively, it takes the same ``time'' (difference in input) to ``travel the same distance'' (length between outputs), justifying the name \textit{speed}.
If the speed \(\lambda=1\), \(\sigma\) is said to be \textit{parametrized by arc length}.
\definition{
A curve \(\gamma:I\to X\) is called a \textit{geodesic} if it has constant speed and \(L(\gamma)|_{[t,t']}=d(\gamma(t),\gamma(t'))\) for all \(t\leq t'\in I\).
A curve \(\sigma:I\to X\) is called a \textit{local geodesic} if for any \(t\in I\) there exists \(\varepsilon_t>0\) such that \(\sigma|_{I\cap [t-\varepsilon_t,t+\varepsilon_t]}\) is a geodesic.
A space \((X,d)\) is called a \textit{geodesic space} if any two points \(x,y\in X\) have a geodesic \(\gamma_{xy}:[0,1]\to X\) joining them.
Similarly, a space \((X,d)\) is called a \textit{local geodesic space} if any two points \(x,y\in X\) have a local geodesic \(\sigma_{xy}:[0,1]\to X\) joining them.
If such a geodesic is unique, we say that the metric space \((X,d)\) is \textit{uniquely geodesic}.
} 

Letting \(t=a,t'=b\) as in the definition of a geodesic from $\gamma(a)=x$ to $\gamma(b)=y$ we get \(L(\gamma)|_{[a,b]}=d(x,y)\), showing us that a geodesic can be considered a curve between \(x\) and \(y\) with minimal length.
An alternative definition of a geodesic is the following:
\begin{proposition}
Let \((X,d)\) be a metric space.
A curve \(\gamma:I\to X\) is a geodesic if and only if there exists a constant \(\lambda\geq 0\) such that \(d(\gamma(t),\gamma(t'))=\lambda |t-t'|\) for all \(t,t'\in I\).
\end{proposition}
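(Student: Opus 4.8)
The plan is to prove the two implications separately, unwinding the definition of geodesic — namely constant speed together with the length-equals-distance property — in each direction. The forward implication will be essentially a matter of substitution, while the reverse implication will require returning to the definition of length as a supremum over partitions and exploiting a telescoping phenomenon.

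For the forward direction, I would assume that \(\gamma\) is a geodesic. By definition it then has constant speed \(\lambda\geq 0\), meaning \(L(\gamma|_{[t,t']})=\lambda|t-t'|\) for all \(t\leq t'\), and it satisfies \(L(\gamma|_{[t,t']})=d(\gamma(t),\gamma(t'))\) for all \(t\leq t'\). Setting these two expressions equal immediately yields \(d(\gamma(t),\gamma(t'))=\lambda|t-t'|\) whenever \(t\leq t'\). Since both \(d\) and \(|\cdot|\) are symmetric in their arguments, the restriction \(t\leq t'\) may be dropped, giving the desired identity for all \(t,t'\in I\).

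For the reverse direction, I would assume the existence of a constant \(\lambda\geq 0\) with \(d(\gamma(t),\gamma(t'))=\lambda|t-t'|\) for all \(t,t'\), and compute the length of an arbitrary restriction \(\gamma|_{[s,s']}\). Given any partition \(s=t_0<t_1<\cdots<t_n=s'\), the associated polygonal length is
\[
L_P(\gamma|_{[s,s']})=\sum_{i=1}^n d(\gamma(t_{i-1}),\gamma(t_i))=\sum_{i=1}^n \lambda(t_i-t_{i-1})=\lambda(s'-s),
\]
where I have used that the partition points are increasing, so \(|t_i-t_{i-1}|=t_i-t_{i-1}\), and that the resulting sum telescopes. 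Crucially, this value is independent of the chosen partition, so the supremum defining the length is attained by \emph{every} partition and \(L(\gamma|_{[s,s']})=\lambda|s'-s|\). This is exactly the constant-speed condition, and comparing it with the hypothesis \(d(\gamma(s),\gamma(s'))=\lambda|s'-s|\) shows \(L(\gamma|_{[s,s']})=d(\gamma(s),\gamma(s'))\). Both defining properties of a geodesic therefore hold.

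The only step requiring any care is the telescoping computation in the reverse direction: one must observe that the hypothesis forces every polygonal approximation to share the same length, so that the supremum over partitions collapses to a single value. Once this is recognised the argument is immediate, and since \(\gamma\) is assumed to be a curve its continuity is automatic and needs no separate verification.
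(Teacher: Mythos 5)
Your proof is correct and follows essentially the same route as the paper's: the forward direction by combining the two defining properties of a geodesic, and the reverse direction by observing that the hypothesis makes every polygonal sum telescope to \(\lambda|s'-s|\), so the supremum over partitions is immediate. Your version is in fact slightly cleaner, since you make explicit both the telescoping step and the removal of the restriction \(t\leq t'\) by symmetry.
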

\begin{proof}
For the first implication, let \(\gamma:I\to X\) be a geodesic.
Note that the definition of a geodesic requires the curve \(\gamma\) to be of constant speed \(\lambda>0\), i.e. \(L(\gamma|_{[t,t']})=\lambda|t-t'|\).
Combining definitions we then get \(d(\gamma(t),\gamma(t'))=L(\gamma|_{[t,t']})=\lambda|t-t'|\) proving the first implication.

Next consider a curve \(\gamma:I\to X\) as in the proposition.
The length of such a curve restricted to any interval \([t,t']\) is
\[L(\gamma|_{[t,t']})
=\sup\sum d(\gamma(t_{i-1}),\gamma(t_i))
=\sup\sum\lambda|t_i-t_{i-1}|
\overset{(1)}{=} \sup \lambda|t-t'|
\overset{(2)}=\lambda|t-t'|=d(\gamma(t),\gamma(t')),\]
where equality (1) follows from the sum being over a partition of the interval \([t,t']\), and equality (2) follows from \(\lambda|t-t'|\) being constant.
This shows that \(\gamma\) has constant speed \(\lambda\) and that \(L(\gamma|_{[t,t']})=d(\gamma(t),\gamma(t'))\), i.e. that \(\gamma\) is a geodesic.

\end{proof}

\begin{proposition}
Every geodesic metric space is a length space.\end{proposition}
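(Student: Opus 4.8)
The plan is to prove directly that $d = d_i$, which is exactly the defining condition for $(X,d)$ to be a length space (Definition \ref{def::length metric}). I would establish this by proving the two inequalities $d(x,y) \le d_i(x,y)$ and $d_i(x,y) \le d(x,y)$ for arbitrary points $x,y \in X$.

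First I would prove $d(x,y) \le d_i(x,y)$, which in fact holds in \emph{any} metric space and does not require the geodesic hypothesis. The idea is that for any rectifiable curve $\sigma\colon[a,b]\to X$ joining $x$ and $y$ and any partition $P$ given by $a = t_0 < \cdots < t_n = b$, repeated application of the triangle inequality for $d$ yields
\[
d(x,y) = d(\sigma(t_0),\sigma(t_n)) \le \sum_{i=1}^{n} d(\sigma(t_{i-1}),\sigma(t_i)) = L_P(\sigma) \le L(\sigma).
\]
Since this holds for every such curve, taking the infimum over all rectifiable curves joining $x$ and $y$ gives $d(x,y) \le d_i(x,y)$.

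Next I would use the geodesic hypothesis to obtain the reverse inequality. Because $(X,d)$ is a geodesic space, there is a geodesic $\gamma\colon[0,1]\to X$ joining $x$ and $y$. Evaluating the defining condition of a geodesic at the endpoints (equivalently, using the proposition that a geodesic realizes the distance between its endpoints) shows $L(\gamma) = d(x,y) < \infty$, so $\gamma$ is rectifiable and is therefore one of the curves over which the infimum defining $d_i$ is taken. Consequently $d_i(x,y) \le L(\gamma) = d(x,y)$.

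Combining the two inequalities yields $d(x,y) = d_i(x,y)$ for all $x,y \in X$, hence $d = d_i$ and $(X,d)$ is a length space. I do not anticipate a serious obstacle: the first inequality is a routine consequence of the triangle inequality, and the second is an immediate application of the earlier result identifying the length of a geodesic with the distance between its endpoints. The only point needing a little care is verifying that the geodesic is genuinely rectifiable, so that it is admissible in the infimum defining $d_i$; this holds because its length equals the finite number $d(x,y)$.
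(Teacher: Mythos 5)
Your proof is correct, and its skeleton is the same as the paper's: exhibit a geodesic as a rectifiable curve of length $d(x,y)$, show no rectifiable curve can do better, and conclude $d_i(x,y)=d(x,y)$. The execution differs in two ways, and both differences are in your favor. First, the paper spends the bulk of its proof reparametrizing the geodesic $\gamma':[a,b]\to X$ by the map $\varphi(t)=a+\tfrac{t(b-a)}{d(x,y)}$, verifying that the result is a unit-speed geodesic on $[0,d(x,y)]$, and then recomputing its length via partitions; you bypass all of this by evaluating the defining property of a geodesic, $L(\gamma|_{[t,t']})=d(\gamma(t),\gamma(t'))$, at the endpoints --- which is legitimate, and is in fact exactly the remark the paper itself makes immediately after the definition of a geodesic. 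Second, the paper handles the lower bound with the single phrase ``by the definition of length, $L(\sigma)\geq d(x,y)$ for any rectifiable curve $\sigma$,'' whereas you supply the actual justification: the telescoping triangle inequality
\[
d(x,y)=d(\sigma(t_0),\sigma(t_n))\leq\sum_{i=1}^{n}d(\sigma(t_{i-1}),\sigma(t_i))=L_P(\sigma)\leq L(\sigma),
\]
followed by taking the infimum. So your write-up is shorter where the paper is needlessly long, and more detailed on the one step the paper glosses over; your closing observation that $L(\gamma)=d(x,y)<\infty$ guarantees the geodesic is admissible in the infimum is also a point of care the paper leaves implicit.
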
 
\begin{proof}
Let \((X,d)\) be a geodesic metric space, and let \(x\neq y\in X\).
By definition of geodesic metric space, there exists a geodesic \(\gamma':[a,b]\to X\) between \(x\) and \(y\) with speed \(\lambda\).
Define \(\varphi:[0,d(x,y)]\to [a,b]\) by \(\varphi(t)=a+\frac{t(b-a)}{d(x,y)}\), and let \(\gamma=\gamma'\circ\varphi\).
We check 
\begin{align*}
    \gamma(0)       &=\gamma'\left(a+\frac{0(b-a)}{d(x,y)}\right)=\gamma'(a)\\
    \gamma(d(x,y))  &=\gamma'\left(a+\frac{d(x,y)(b-a)}{d(x,y)}\right)=\gamma'(b),\\
    d(\gamma(t),\gamma(t')) &=\lambda\left|a+\frac{(b-a)}{d(x,y)}t'-a-\frac{b-a}{d(x,y)}t\right|=\lambda_1|t'-t|,
\end{align*}   
which shows that \(\gamma:[0,d(x,y)]\to X\) is a geodesic with speed \(\lambda_1=\frac{b-a}{d(x,y)}\lambda\) defined on \([0,d(x,y)]\).
By definition of a geodesic \(d(\gamma(0),\gamma(d(x,y))=d(x,y)=\lambda_1|0-d(x,y)|=\lambda_1 d(x,y)\) as \(d(x,y)\geq 0\) by definition of a metric.
As \(x,y\) were chosen to be distinct we further have \(d(x,y)>0\) so we can divide through to get \(\lambda_1=1\).
Next consider \(L(\gamma)=\sup\sum_{i=1}^nd(\gamma(t_i),\gamma(t_{i-1}))=\sup\sum_{i=1}^n |t_i-t_{i-1}|\).
The last equality follows from \(d(\gamma(s),\gamma(t))=|s-t|\) and \(t_i\geq t_{i-1}\).
Now we are summing over a partition of \([0,d(x,y)]\), so \(\sum_i |t_i-t_{i-1}|=d(x,y)-0=d(x,y)\), whereby \(L(\gamma)=d(x,y)\).
By the definition of length, \(L(\sigma)\geq d(x,y)\) for any rectifiable curve \(\sigma\) joining \(x,y\), so 
\[d(x,y)=L(\gamma)=\inf L(\gamma)=d_i(x,y),\] whereby \(d(x,y)=d_i(x,y)\) finishing the proof.\end{proof}

\section{The Hopf--Rinow Theorem}
In the previous section we showed that every geodesic metric space is a length space.
The converse is not always true as seen in the following example:
\begin{example}
Consider the space \(X=\R^2\setminus\{0\}\) equipped with the induced metric \(d_E\) from the Euclidean metric on \(\R^2\).
Any curve \(\sigma:[-t,t]\to X\) between \(x\in X\) and \(-x\in X\) cannot pass through the origin. 
Therefore \(L(\sigma|_{[-t,t]})>d_E(\sigma(-t),\sigma(t))\) whereby \(\sigma\) is not a geodesic. 
\((X,d)\) is therefore not a geodesic metric space.

The infimum of the length of all rectifiable curves between \emph{any} two points \(x,y\in X\), namely \(d_i(x,y)\), is the length of the straight line, i.e. the Euclidean metric \(d_E(x,y)\). 
This can be seen as follows:
If the straight line between \(x\) and \(y\), namely \([x,y]\subseteq\R^2\), does not contain the origin, \(d_i(x,y)=d_E(x,y)\) just as in \((\R^2,d)\). 
In the case where \([x,y]\) \emph{does} contain the origin, let \(P_\varepsilon\) be a point on the unique line passing through \(0\) orthogonal to the line between \(x\) and \(y\) such that \(d(P_\varepsilon,0)=\varepsilon\).
Consider the polygonal curve \(\sigma_\varepsilon=[x,P_\varepsilon,y]\), which is rectifiable.
By Pythagoras this has length \(\sqrt{d(x,0)^2+\varepsilon^2}+\sqrt{\varepsilon^2+d(0,y)^2}\).
As \(\varepsilon>0\) can be chosen arbitrarily small, \(\inf(L(\sigma_\varepsilon))=\sqrt{d(x,0)^2}+\sqrt{d(0,y)^2}=d(x,0)+d(0,y)=d(x,y)\).
As \(d_i(x,y)\geq d(x,y)\) we have \(d(x,y)=\inf(L(\sigma_\varepsilon))=\inf(L(\sigma))=d_i(x,y)\) whereby \(X\) is a length space.
\end{example}

A natural question would now be: under which circumstances are length spaces geodesic spaces?
This is encapsulated in the so-called Hopf--Rinow Theorem. 
Before stating (and proving) this important result, we will need some preliminaries.

\subsection{Continuity of the length function}
\begin{proposition}\label{prop:cont_res}
    Let \(\sigma:[0,1]\to X\) be a rectifiable curve from the interval \([0,1]\) to a metric space \((X,d)\).
    Then \(\length{[0,t]}\) is continuous with respect to the parameter \(t\in [0,1]\).
\end{proposition}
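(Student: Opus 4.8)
The plan is to write $\ell(t) := \length{[0,t]}$ and to exploit two structural facts about length: additivity under subdivision, namely $\ell(t) = \ell(s) + \length{[s,t]}$ for all $0 \le s \le t \le 1$, and the non-negativity of each summand, which makes $\ell$ non-decreasing. I would first record this additivity, since it reduces the whole problem to a single estimate: because $\ell(t) - \ell(s) = \length{[s,t]}$ for $s \le t$, the function $\ell$ is continuous at a point precisely when the length of $\sigma$ over a shrinking subinterval tends to $0$. Being monotone and bounded (by $L(\sigma) < \infty$), $\ell$ automatically has one-sided limits everywhere, so the only thing to rule out is a jump.

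Fix $t_0 \in [0,1)$ and $\varepsilon > 0$; I would establish right-continuity, the left case being symmetric. The key idea is to control the finite tail $\length{[t_0,1]}$ through the supremum defining it: choose a partition $t_0 = u_0 < u_1 < \cdots < u_m = 1$ of $[t_0,1]$ with $\length{[t_0,1]} - \varepsilon < \sum_{j=1}^m d(\sigma(u_{j-1}),\sigma(u_j))$. For any $t$ with $t_0 < t < u_1$, the points $t < u_1 < \cdots < u_m$ form a partition of $[t,1]$, so $\length{[t,1]} \ge d(\sigma(t),\sigma(u_1)) + \sum_{j=2}^m d(\sigma(u_{j-1}),\sigma(u_j))$. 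Subtracting this from the additivity identity $\length{[t_0,1]} = \length{[t_0,t]} + \length{[t,1]}$, almost every term cancels, leaving $\length{[t_0,t]} < d(\sigma(t_0),\sigma(u_1)) - d(\sigma(t),\sigma(u_1)) + \varepsilon$. The reverse triangle inequality converts the difference of distances into $d(\sigma(t_0),\sigma(t))$, yielding $\length{[t_0,t]} < d(\sigma(t_0),\sigma(t)) + \varepsilon$.

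To finish, I would invoke the continuity of the curve $\sigma$: as $t \to t_0^+$ we have $d(\sigma(t_0),\sigma(t)) \to 0$, so for $t$ sufficiently close to $t_0$ the right-hand side drops below $2\varepsilon$. Since $\varepsilon$ was arbitrary, $\ell(t) - \ell(t_0) = \length{[t_0,t]} \to 0$, which is right-continuity at $t_0$; repeating the argument on $[0,t_0]$ with a partition pinned at the left endpoint gives left-continuity, and together these yield continuity of $\ell$ on all of $[0,1]$. I expect the main obstacle to be the cancellation estimate of the second paragraph: it succeeds only because the partition is chosen to nearly realize the supremum \emph{and} because $t$ is squeezed strictly inside the first subinterval $(t_0,u_1)$, and one must apply the reverse triangle inequality in the correct direction. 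The additivity of length under subdivision, if not taken as known, is the other delicate point, though it follows routinely from \Cref{length} by comparing partitions of $[a,b]$ with those obtained by inserting the intermediate division point.
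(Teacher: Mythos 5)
Your proof is correct, and while it rests on the same engine as the paper's --- a partition nearly realizing the supremum, plus additivity of length under subdivision --- the implementation is genuinely different and, in one respect, tighter. The paper works globally: it invokes \emph{uniform} continuity of \(\sigma\) on the compact interval, picks one near-optimal partition of all of \([0,1]\), refines it to mesh \(<\delta_\varepsilon\), and then uses the fact that a sum of non-negative deficits \(\sum_i\bigl[\length{[t_{i-1},t_i]}-d(\sigma(t_{i-1}),\sigma(t_i))\bigr]\leq\varepsilon\) forces each individual deficit to be at most \(\varepsilon\), giving \(\length{[t_{i-1},t_i]}<2\varepsilon\). You instead work locally at a fixed \(t_0\): the partition is pinned at \(t_0\), the cancellation is done by comparing the partition sum for \([t_0,1]\) against the induced partition of \([t,1]\), and the reverse triangle inequality converts the leftover difference into \(d(\sigma(t_0),\sigma(t))\), so only \emph{pointwise} continuity of \(\sigma\) at \(t_0\) is needed. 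What the paper's route buys is (in principle) a uniform modulus for \(t\mapsto\length{[0,t]}\) in one stroke; what yours buys is the avoidance of a small gap in the paper's final step --- its estimate \(\length{[t_{i-1},t_i]}<2\varepsilon\) is literally established only for \emph{consecutive points of the chosen refined partition}, not for an arbitrary pair \(t',t\) with \(|t-t'|<\delta_\varepsilon\) (such a pair can straddle a partition point, which requires an extra line, e.g.\ splitting at that point at the cost of a factor \(2\)). Your bound \(\length{[t_0,t]}<d(\sigma(t_0),\sigma(t))+\varepsilon\) holds for every \(t\in(t_0,u_1)\), so no such patch is needed. Your reliance on additivity, \(\length{[0,t]}=\length{[0,s]}+\length{[s,t]}\), is the same unproved ingredient the paper also asserts at the outset; your closing remark that it follows from \Cref{length} by inserting the division point into partitions is exactly the justification one would supply, and finiteness of \(L(\sigma)\) (rectifiability) is what makes your subtraction \(\length{[t_0,t]}=\length{[t_0,1]}-\length{[t,1]}\) legitimate.
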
 
\begin{proof}
By the definition of \(L\), note that \(\length{[0,b]}-\length{[0,a]}=\length{[a,b]}\), for any $0<a<b<1$.
We now want to show that for every \(\varepsilon>0\) there exists a \(\delta_\varepsilon>0\) such that \(|t-t'|<\delta_\varepsilon\) implies \(\length{[0,t]}-\length{[0,t']}=\length{[t',t]}<\varepsilon\).

\medskip
Indeed, by the definition of a curve \(\sigma\) is continuous on the compact interval \([0,1]\), so it must be uniformly continuous. 
Therefore, for every \(\varepsilon>0\) there exists a \(\delta_\varepsilon>0\) such that \(|t-t_1|<\delta_\varepsilon\) implies \(d(\sigma(t),\sigma(t_1))<\varepsilon\).
As \(L(\sigma)\) is the supremum over all partitions, we can find a partition \(P\) of \([0,1]\) such that 
\begin{equation}\label{eq:L-e}
L(\sigma)-\varepsilon<\sum_{i=1} d(\sigma(t_i),\sigma(t_{i-1})).
\end{equation}
We can now choose a refinement \(P'\) of \(P\) such that \(|t_i-t_{i-1}|<\delta_\varepsilon\) for all \(i\).
By the triangle inequality, the length of the refinement will be greater than or equal to the original partition \(P\), so the inequality \eqref{eq:L-e} still holds.
As we are dealing with a partition of \([0,1]\), we have 
\[L(\sigma)=\sum_{i=1} \length{[t_{i-1},t_i]}\geq \sum_{i=1} d(\sigma(t_i),\sigma(t_{i-1}))\geq L(\sigma)-\varepsilon.\]
Subtracting the different expressions for \(L(\sigma)\) and rearranging we get
\begin{align*}
\sum_{i=1} \left[d(\sigma(t_i),\sigma(t_{i-1}))-\length{[t_{i-1},t]}\right]    &\geq -\varepsilon&\\
\sum_{i=1} \left[\length{[t_{i-1},t_i]}-d(\sigma(t_i),\sigma(t_{i-1}))\right] &\leq \varepsilon.
\end{align*}

As \(\length{[t_{i-1},t_i]}\geq d(\sigma(t_i),\sigma(t_{i-1}))\) all of the terms above are positive.
Selecting only one of the terms must therefore still preserve the inequality, whereby:
\begin{align*}
0\leq\length{[t_{i-1},t_i]}-d(\sigma(t_i),\sigma(t_{i-1}))&\leq \varepsilon\\
\length{[t_{i-1},t_i]}  &\leq \varepsilon+d(\sigma(t_i),\sigma(t_{i-1})).
\intertext{Finally, as \(d(\sigma(t_i),\sigma(t_{i-1}))<\varepsilon\):}
\length{[t_{i-1},t_i]}  &<2\varepsilon,                             
\end{align*}
whenever \(|t_{i-1},t_i|<\delta_\varepsilon\).
As \(\varepsilon\) was chosen arbitrarily that concludes the proof.
\end{proof}

\subsection{Midpoints}
Looking at the length space \(\R^2\setminus\{0\}\) from above we notice that there is no point \emph{exactly} in the middle between \(-1\) and \(1\), because this would be the point \(\{0\}\) which is not in the set.
We would like to know when such midpoints exist.
This motivates the following definition:

\begin{definition}
Let \(x,y\) be two points in a metric space \((X,d)\), and let \(\varepsilon\geq 0\).
An \textit{\(\varepsilon\)-midpoint} between \(x\) and \(y\) is a point \(z\in X\) such that 
\[d(x,z),d(y,z)\leq \frac{d(x,y)}{2}+\varepsilon.\]
A 0-midpoint is simply called a \textit{midpoint}.
If for a given \(\varepsilon>0\) there exists an \(\varepsilon\)-midpoint between any two points, we say that \(X\) \textit{admits \(\varepsilon\)-midpoints}.
If there exists a midpoint between any two points, we simply say that \(X\) \textit{admits midpoints}.
\end{definition} 
\begin{lemma}[Midpoints]\label{lem:midpoints}
\parenum
\begin{enumerate}
    \item \(X\) is a length space if and only if it admits \(\varepsilon\)-midpoints for all \(\varepsilon>0\).
    \item \(X\) is a geodesic space if and only if it admits midpoints.
\end{enumerate}
\end{lemma}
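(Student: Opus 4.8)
The plan is to handle the two biconditionals in parallel, exploiting that they differ only in replacing approximate midpoints by exact ones. In each part I would prove the forward implication straight from the definitions and the backward implication by an iterated midpoint construction over the dyadic rationals, followed by a limiting step. For the forward direction of (1), suppose $X$ is a length space and fix $x,y$ and $\varepsilon>0$. Since $d=d_i$, choose a rectifiable curve $\sigma\colon[0,1]\to X$ from $x$ to $y$ with $L(\sigma)<d(x,y)+2\varepsilon$. By \Cref{prop:cont_res} the map $t\mapsto\length{[0,t]}$ is continuous and nondecreasing, running from $0$ to $L(\sigma)$, so the intermediate value theorem produces $t_0$ with $\length{[0,t_0]}=\tfrac12 L(\sigma)$. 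Putting $z=\sigma(t_0)$ and using $d(x,z)\le\length{[0,t_0]}$ and $d(z,y)\le\length{[t_0,1]}$, both equal to $\tfrac12 L(\sigma)<\tfrac12 d(x,y)+\varepsilon$, shows $z$ is an $\varepsilon$-midpoint. The forward direction of (2) is shorter: given a geodesic $\gamma\colon[0,1]\to X$ from $x$ to $y$, the alternative characterisation of geodesics gives $d(\gamma(s),\gamma(t))=d(x,y)\,|s-t|$, whence $z=\gamma(\tfrac12)$ is an exact midpoint.

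The substance is in the backward implications. For (2), assuming $X$ admits midpoints, I would define $\gamma$ on the dyadic rationals by $\gamma(0)=x$, $\gamma(1)=y$, and recursively letting $\gamma\big(\tfrac{2k+1}{2^{n+1}}\big)$ be a midpoint of $\gamma\big(\tfrac{k}{2^n}\big)$ and $\gamma\big(\tfrac{k+1}{2^n}\big)$. The first observation is that for an exact midpoint $m$ of $a,b$ the triangle inequality forces $d(a,m)=d(m,b)=\tfrac12 d(a,b)$, since $d(a,b)\le d(a,m)+d(m,b)\le d(a,b)$. An induction on the dyadic level upgrades this to the clean identity $d(\gamma(s),\gamma(t))=d(x,y)\,|s-t|$ for all dyadic $s,t$: consecutive points at level $n$ lie at distance $d(x,y)/2^n$, and a telescoping use of the triangle inequality (bounding above by summing consecutive distances, below by comparison with the total distance $d(x,y)$) makes every intermediate triangle inequality an equality. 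This makes $\gamma$ Lipschitz on a dense subset of $[0,1]$, so it extends to a continuous $\gamma\colon[0,1]\to X$; the identity persists for the extension, and by the alternative characterisation of geodesics $\gamma$ is the required geodesic.

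For the backward direction of (1) I would run the same construction with $\varepsilon_n$-midpoints at level $n$, choosing $\varepsilon_n>0$ so that $\sum_n 2^n\varepsilon_n<\varepsilon$. The midpoint distances are now only approximately halved, so I would track $a_n=\max_k d\big(\gamma(\tfrac{k}{2^n}),\gamma(\tfrac{k+1}{2^n})\big)$, which obeys $a_{n+1}\le\tfrac12 a_n+\varepsilon_{n+1}$; solving the recursion gives $2^n a_n\le d(x,y)+\sum_j 2^j\varepsilon_j<d(x,y)+\varepsilon$, so $\gamma$ is $(d(x,y)+\varepsilon)$-Lipschitz on the dyadics. After extending to a curve $\gamma\colon[0,1]\to X$, this Lipschitz bound controls every partition sum $L_P(\gamma)$ and yields $L(\gamma)\le d(x,y)+\varepsilon$; hence $d_i(x,y)\le d(x,y)+\varepsilon$ for all $\varepsilon>0$, and since $d_i\ge d$ always, $d=d_i$.

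I expect the crux to be the limiting step common to both backward implications: verifying that the dyadic skeleton really assembles into an honest continuous, rectifiable curve, and in part (1) controlling the accumulated midpoint errors tightly enough to bound $L(\gamma)$. This step quietly uses completeness of $X$, and some such hypothesis is genuinely needed: $\Q^2$ admits exact midpoints yet carries no nonconstant curve, so it has midpoints without being a length or geodesic space. I would therefore invoke completeness at the extension step, or state the backward implications for complete $X$.
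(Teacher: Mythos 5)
Your proposal is correct and follows essentially the same route as the paper: forward directions via continuity of arc length (Proposition \ref{prop:cont_res}) and bisection of the length, backward directions via an iterated (approximate) midpoint construction on the dyadic rationals, a Lipschitz bound (your recursion on \(a_n\) with \(\sum_n 2^n\varepsilon_n<\varepsilon\) is just a tidier bookkeeping of the paper's choice \(\varepsilon_n=\varepsilon/2^{2n}\) and geometric-series estimate), a continuous extension, and a partition-sum bound giving \(L(\gamma)\le d(x,y)+\varepsilon\). Your closing observation is also well taken: the lemma as stated omits completeness, yet the paper's own proof invokes it at the extension step, so your insistence on adding that hypothesis --- with \(\Q^2\) as the counterexample showing it is genuinely needed --- is a point of precision on which your write-up improves on the original.
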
 
\begin{proof}
\parenum
\begin{enumerate}
    \item First assume \(X\) is a length space.
For any two points \(x,y\in X\) and \(\varepsilon>0\) we want to find an \(\varepsilon\)-midpoint.
As \(X\) is a length space, for any rectifiable curve \(\sigma:[0,1]\to X\) with \(\sigma(0)=x,\sigma(1)=y\) we have \(d(x,y)\leq L(\sigma)\).
In particular, there must exist a curve \(\sigma_1\) with length \(d(x,y)\leq L(\sigma_1)\leq d(x,y)+\varepsilon\).
By Proposition \ref{prop:cont_res}, the restricted length function \(\length[\sigma_1]{[0,t]}\) is continuous with respect to the variable \(t\).
Furthermore, \(\length[\sigma_1]{[0,t]}+\length[\sigma_1]{[t,1]}=L(\sigma_1)\), so we can find \(t\in [0,1]\) such that \(\length[\sigma_1]{[0,t]}=\length[\sigma_1]{[t,1]}\) whereby \(L(\sigma_1)=2\length[\sigma_1]{[0,t]}=2\length[\sigma_1]{[t,1]}\geq 2d(x,z), 2d(z,y)\), where \(z=\sigma_1(t)\).
We now have \(2d(z,y)\leq 2L(\sigma_1|_{[0,t]})=L(\sigma_1)\leq d(x,y)+\varepsilon\), whereby 
\begin{align*}
    d(x,z)&\leq\frac{d(x,y)}{2}+\varepsilon/2<\frac{d(x,y)}{2}+\varepsilon,\\
    d(z,y)&\leq\frac{d(x,y)}{2}+\varepsilon/2<\frac{d(x,y)}{2}+\varepsilon,
\end{align*}
showing that \(\sigma_1(t)\) is an \(\varepsilon\)-midpoint and thereby showing that all length spaces admit \(\varepsilon\)-midpoints.

Assume now that a metric space \((X,d)\) admits \(\varepsilon\)-midpoints for all \(\varepsilon>0\).
Let \(x,y\in X\).
Define for a given positive integer \(n\) the number \(\varepsilon_n=\varepsilon/2^{2n}>0\).
Now define \(\sigma(1/2)\) to be an \(\varepsilon_1\)-midpoint between \(x=\sigma(0)\) and \(y=\sigma(1)\).
Next, define \(\sigma(1/4),\sigma(3/4)\) to be \(\varepsilon_2\)-midpoints between \(\sigma(0),\sigma(1/2)\) and \(\sigma(1/2),\sigma(1)\) respectively.
Keep going, defining \(\sigma(k/2^n)\), where \(0<k/2^n<1, k\), with \(k\) odd, to be an \(\varepsilon_n\)-midpoint between \(\sigma\left(\frac{k-1}{2^n}\right)\) and \(\sigma\left(\frac{k+1}{2^n}\right)\).

We now have a function \(\sigma:D\to X\) from the dyadic rationals \(D\), i.e. rationals of the form \(m/2^n\), to the metric space \(X\).
We now claim that \(d\left(\sigma\left(\frac{k}{2^n}\right),\sigma\left(\frac{k+1}{2^{n}}\right)\right)\leq \frac{1}{2^n}(d(x,y)+\varepsilon)\).
First, as \(\sigma\left(\frac{k}{2^n}\sigma\right)\) is an \(\varepsilon_n\)-midpoint between \(\sigma\left(\frac{k-1}{2^n}\right)\) and \(\sigma\left(\frac{k+1}{2^n}\right)\) we have
\begin{align}
d\left(\sigma\left(\frac{k}{2^n}\right),\sigma\left(\frac{k+1}{2^n}\right)\right)
&\leq \frac{1}{2}d\left(\sigma\left(\frac{k-1}{2^n}\right),\sigma\left(\frac{k+1}{2^n}\right)\right)+\varepsilon_n\label{eq:mid1}\\
&=d\left(\sigma\left(\frac{k}{2^n}\right),\sigma\left(\frac{k+1}{2^n}\right)\right)+\varepsilon/2^{2n}.\notag
\end{align}

As \(k\) is odd, we can define \(k-1=2\ell\) to get integers \(\frac{k-1}{2^n}=\frac{\ell}{2^{n-1}}\) and \(\frac{k+1}{2^{n}}=\frac{\ell+1}{2^{n-1}}\).
Either \(\ell\) or \(\ell+1\) are odd.
Assume without loss of generality that \(\ell+1\) is odd.
Then
\begin{align*}
\frac{1}{2}d\left(\sigma\left(\frac{k-1}{2^n}\right),\sigma\left(\frac{k+1}{2^n}\right)\right)
    &=\frac{1}{2}d\left(\sigma\left(\frac{\ell}{2^{n-1}}\right),\sigma\left(\frac{\ell+2}{2^{n-1}}\right)\right)\label{eq:mid2}\\
    &\leq \frac{1}{2^2}d\left(\sigma\left(\frac{\ell}{2^{n-1}}\right),\sigma\left(\frac{\ell+2}{2^{n-1}}\right)\right)+\frac{1}{2}\varepsilon_{n-1}.\notag
\end{align*}

Combining equations \eqref{eq:mid1} and \eqref{eq:mid2}, we get
\begin{align}d\left(\sigma\left(\frac{k}{2^n}\right),\sigma\left(\frac{k+1}{2^n}\right)\right)
&\leq d\left(\sigma\left(\frac{k}{2^n}\right),\sigma\left(\frac{k+1}{2^n}\right)\right)+\varepsilon/2^{2n}\\
&\leq \frac{1}{2^2}d\left(\sigma\left(\frac{\ell}{2^{n-1}}\right),\sigma\left(\frac{\ell+2}{2^{n-1}}\right)\right)+\frac{1}{2}\varepsilon_{n-1}.
\end{align}

Repeating this argument eventually gets us
\begin{align*}
d\left(\sigma\left(\frac{k}{2^n}\right),\sigma\left(\frac{k+1}{2^n}\right)\right)
&\leq \frac{1}{2^n}d(x,y)+\frac{\varepsilon}{2^{2n}}+\frac{1}{2}\frac{\varepsilon}{2^{2(n-1)}}+\ldots+\frac{1}{2^{n-1}}\frac{\varepsilon}{2^2}\\
&\leq \frac{1}{2^n}d(x,y)+\frac{\varepsilon}{2^n}\left(\frac{1}{2^n}+\frac{1}{2^{n-1}}+\ldots+\frac{1}{2}\right)
\end{align*}
Now the parenthesis on the right hand side is a partial sum of the series \(\sum_{i=1}^\infty 1/2^n=1\).
All terms are positive, so the partial sum is strictly smaller than the series, finally giving us
\begin{equation}
d\left(\sigma\left(\frac{k}{2^n}\right),\sigma\left(\frac{k+1}{2^n}\right)\right)\leq 
\frac{1}{2^n}(d(x,y)+\varepsilon)\label{eq:mid3}.    
\end{equation}

Let \(t,t'\in [0,1]\cap D\) be any two dyadic rationals.
Equation \eqref{eq:mid3} then tells us that \(d(\sigma(t),\sigma(t'))=(d(x,y)+\varepsilon)|t-t'|\), showing us that \(\sigma\) is Lipschitz with Lipschitz constant \(d(x,y)+\varepsilon\).
Because \(X\) is complete, and \(\sigma\) is Lipschitz, we can then extend \(\sigma\) continuously to a function \(\sigma:[0,1]\to X\). 
This extension is a curve, where we still have \(d(\sigma(t),\sigma(t'))\leq (d(x,y)+\varepsilon)|t-t'|\), now for all \(t,t'\in [0,1]\).

Let \(P\) be a partition \(0=t_0<t_1<\ldots<t_m=1\) of \([0,1]\).
Using the Lipschitz bound \((d(x,y)+\varepsilon)\) we then get
\begin{align}
\sum_{i=0}^{m-1} d\left(\sigma(t_i),\sigma(t_{i+1})\right)
&\leq\sum_{i=0}^{m-1} (d(x,y)+\varepsilon)|t_i-t_{i+1}|\notag\\
&=(d(x,y)+\varepsilon)\sum_{i=0}^{m-1}|t_i-t_{i+1}|\notag\\
&=d(x,y)+\varepsilon.\label{eq:mid4}
\end{align}
As this holds for all partitions of \([0,1]\) the definition of length gives us \(L(\sigma)\leq d(x,y)+\varepsilon\).
Because \(\varepsilon\) was arbitrary, we get that the infimum of lengths of \emph{all} rectifiable curves \(\sigma'\) must be at most the infimum of curves constructed in this way for any given \(\varepsilon\).
This infimum is \(d(x,y)\), so \(L(\sigma')\leq d(x,y)\) whereby \(X\) is a length space.

\item Remove any mentions of \(\varepsilon\) in the previous proof and proceed in the exact same way.    
\end{enumerate}
\end{proof}

Now we are ready to state the Hopf--Rinow Theorem.
\begin{theorem}[Hopf--Rinow]\label{thm:Hopf_Rinow}
Let \((X,d)\) be a length space.
If \((X,d)\) is complete and locally compact, then
\setenumerate[1]{label=\textbf{(\arabic*)}}
\begin{enumerate}
    \item every closed bounded subset of \((X,d)\) is compact (i.e. \((X,d)\) is proper).
    \item \((X,d)\) is a geodesic space.
\end{enumerate} 
\end{theorem}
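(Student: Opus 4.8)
The plan is to treat the two conclusions separately, using conclusion (1) to establish conclusion (2). For (1) I would first reduce to showing that every closed ball $\overline{B}(x_0,r)=\{x:d(x_0,x)\le r\}$ is compact, since any closed bounded set is a closed subset of such a ball, and closed subsets of compact sets are compact. Fixing a basepoint $x_0$, I set $\rho=\sup\{r\ge 0:\overline{B}(x_0,r)\text{ is compact}\}$ and aim to prove $\rho=\infty$. Local compactness guarantees $\rho>0$, and monotonicity (a closed ball of smaller radius is a closed subset of a larger compact ball) means $\overline{B}(x_0,r)$ is compact for every $r<\rho$. The heart of the matter is to derive a contradiction from the assumption $\rho<\infty$. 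For (2), once (1) is known the space is proper, and I would invoke the Midpoints Lemma \ref{lem:midpoints}: since $X$ is a length space it admits $\varepsilon$-midpoints for all $\varepsilon$, and properness lets me upgrade a sequence of $\tfrac1n$-midpoints to an honest midpoint, so $X$ admits midpoints and is therefore geodesic.

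For part (1) I would proceed in two steps. First, show $\overline{B}(x_0,\rho)$ itself is compact by proving it is totally bounded; it is complete, being closed in the complete space $X$. Given $\varepsilon>0$, pick $r<\rho$ with $\rho-r$ small, so $\overline{B}(x_0,r)$ is compact and hence has a finite $\varepsilon/2$-net $N$. For any $z\in\overline{B}(x_0,\rho)$ the length-space hypothesis gives a curve $\sigma$ from $x_0$ to $z$ of length close to $d(x_0,z)\le\rho$; by continuity of the restricted length function (Proposition \ref{prop:cont_res}) and the intermediate value theorem I can pick $w=\sigma(t)$ with $L(\sigma|_{[0,t]})=r$, so that $d(x_0,w)\le r$ and $d(w,z)$ is at most the remaining length, which is close to $\rho-r$. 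Thus $z$ is close to $\overline{B}(x_0,r)$ and hence close to $N$, making $N$ an $\varepsilon$-net and proving total boundedness. Second, assuming $\rho<\infty$ I derive a contradiction: by local compactness each $y\in\overline{B}(x_0,\rho)$ has a compact ball $\overline{B}(y,r_y)$; finitely many $B(y_i,r_{y_i}/2)$ cover the compact set $\overline{B}(x_0,\rho)$, and with $\varepsilon=\min_i r_{y_i}/2$ the same intermediate-point argument shows $\overline{B}(x_0,\rho+\varepsilon)\subseteq\bigcup_i\overline{B}(y_i,r_{y_i})$, a finite union of compact sets. Hence $\overline{B}(x_0,\rho+\varepsilon)$ is compact, contradicting the definition of $\rho$. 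Therefore $\rho=\infty$.

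For part (2), given $x,y\in X$ and each $n\in\N$, let $z_n$ be a $\tfrac1n$-midpoint, which exists by Lemma \ref{lem:midpoints}(1). All $z_n$ lie in the closed bounded set $\overline{B}(x,\tfrac12 d(x,y)+1)$, which is compact by part (1), so a subsequence $z_{n_k}$ converges to some $z$. Passing to the limit in the inequalities $d(x,z_{n_k}),d(y,z_{n_k})\le\tfrac12 d(x,y)+\tfrac1{n_k}$ gives $d(x,z),d(y,z)\le\tfrac12 d(x,y)$, and the triangle inequality $d(x,y)\le d(x,z)+d(z,y)$ forces both to equal $\tfrac12 d(x,y)$; thus $z$ is a midpoint. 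Hence $X$ admits midpoints and is geodesic by Lemma \ref{lem:midpoints}(2).

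The main obstacle I expect is the bootstrapping step in part (1): converting the qualitative local compactness into a uniform enlargement radius $\varepsilon$ and showing $\overline{B}(x_0,\rho+\varepsilon)$ is still compact. The subtlety is that in a mere length space we have no geodesics yet — that is precisely what we are proving — so to locate the intermediate point $w$ near the sphere of radius $r$ I can only use curves of nearly minimal length, each introducing a slack $\eta>0$. Because the covering is finite, I can send $\eta\to 0$ along a subsequence and still land in one fixed compact ball $\overline{B}(y_i,r_{y_i})$, which is what closes the argument; arranging these constants and limiting steps to line up cleanly is the delicate bookkeeping of the proof.
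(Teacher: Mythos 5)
Your proposal is correct, and its skeleton is the paper's: reduce to closed balls, set $\rho=\sup\{r:\overline{B}(x_0,r)\text{ compact}\}$, show $\overline{B}(x_0,\rho)$ is compact, show compactness propagates past $\rho$ to force $\rho=\infty$, and then get part (2) from a convergent subsequence of $\tfrac1n$-midpoints plus Lemma \ref{lem:midpoints}. Where you genuinely diverge is in how the two halves of part (1) are executed. For compactness of $\overline{B}(x_0,\rho)$, the paper works with sequential compactness: it pushes each point of a given sequence inward along a near-minimal curve into a compact ball $\overline{B}(x_0,\rho-\varepsilon_i/2)$, extracts nested convergent subsequences, and runs a diagonal argument to produce a Cauchy subsequence. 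You instead prove total boundedness (using the same inward-pushing idea, via Proposition \ref{prop:cont_res} and the intermediate value theorem, to show an $\varepsilon/2$-net of a slightly smaller compact ball is an $\varepsilon$-net of $\overline{B}(x_0,\rho)$) and then invoke completeness; this buys you a cleaner argument with no diagonalization bookkeeping, at the cost of quoting the standard ``complete $+$ totally bounded $\Rightarrow$ compact'' theorem. For the enlargement step, the paper covers a compact ball by finitely many compact balls and obtains a uniform $\delta>0$ as the minimum of a distance function over a compact ``annulus'' set $Y$; you instead cover by half-radius balls $B(y_i,r_{y_i}/2)$, take $\varepsilon=\min_i r_{y_i}/2$, and use the near-minimal-curve argument again (with the pigeonhole step to kill the slack $\eta$) to land in $\bigcup_i\overline{B}(y_i,r_{y_i})$. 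Your version has the merit of making explicit where the length-space hypothesis enters this step — namely, that in a length space a point of $\overline{B}(x_0,\rho+\varepsilon)$ really is within about $\varepsilon$ of $\overline{B}(x_0,\rho)$ — a point the paper's own argument glosses over. (Minor remark: in your net argument, handle separately the trivial case $L(\sigma)<r$, where $z$ already lies in $\overline{B}(x_0,r)$.) Your part (2) is the same as the paper's, except that passing directly to the limit in $d(x,z_{n_k}),d(y,z_{n_k})\le\tfrac12 d(x,y)+\tfrac1{n_k}$ is cleaner than the paper's contradiction phrasing.
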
 
\begin{proof}

\parenum
\begin{enumerate}
\item Let \((X,d)\) be a metric space as in the theorem, and let \(S\) be a closed bounded subset.
We want to show that \(S\) is compact.
As \(S\) is bounded it will be a closed subset of a closed ball with some center \(x\in X\) and some radius \(r\).
Showing that a closed ball in \(X\) is compact will therefore also show that \(S\) is compact.
It is therefore enough to show that closed balls are compact under the assumptions of the theorem.

Let \(x\in X\) be fixed, and denote by \(\rho\) the largest non-negative number \(\rho\) such that the closed ball \(\overline{B}(x,r)=\{y\in X:d(x,y)\leq r\}\) is compact for \(r<\rho\).
As \(X\) is locally compact, \(\rho>0\).
Let \(\overline{B(x,r)}\) be a compact ball with radius \(r\).
As this is compact, for every \(y\in \overline{B(x,r)}\) there is some \(r_y>0\) such that the corresponding closed ball \(B(y,r_y)\) is compact.
The set of all such compact balls cover the compact ball \(\overline{B(x,r)}\), so by compactness there is a finite subset of these which cover \(\overline{B(x,r)}\), i.e. a finite set of \(B(y_i,r_i)\) such that 
\[\overline{B(x,r)}\subsetneq \bigcup_{i=1}^k B(y_i,r_i)\subsetneq \bigcup_{i=1}^k\overline{B(y_i,r_i)}.\]
The middle expression is a finite union of open sets, so it is itself open.
This in turn means that \(X\setminus \bigcup_{i=1}^k B(y_i,r_i)\) is closed by definition.
The right expression is a finite union of compact and closed sets, so it is itself closed and compact.
Together this means that the set
\[Y:=\left(\bigcup_{i=1}^k\overline{B(y_1,r_i)}\right)\cap\left(X\setminus\bigcup_{i=1}^k B(y_i,r_i)\right)
=\bigcup_{i=1}^k\overline{B(y_i,r_i)}\setminus B(y_i,r_i)\]
is closed and compact.

Consider now the map \(X\to [0,\infty]\) given by \(x'\mapsto d(x',\overline{B(x,r)})\), which is continuous on \(X\), and therefore also continuous on the compact subset \(Y\subseteq X\).
As \(Y\) is compact, so is its image \(I\) under \(d(-,\overline{B(x,r)})\), whereby \(I\) is closed.
We now define \(\delta_x=\min \{I\}\subseteq [0,\infty]\).
This is strictly positive as \(\delta_x=0\) would imply that there is a point \(y\in Y\) which is also contained in \(\overline{B(x,r})\), which is a proper subset of the open set \(\bigcup_{i=1}^k B(y_i,r_i)\) and therefore disjoint from \(Y\).
This shows that \(B(x,r+\delta_x)\subseteq \bigcup_{i=1}^k \overline{B(y_i,r_i)}\), so \(\overline{B(x,r+\delta_x)}\) is compact.
We conclude that \(A:=\{r>0:\overline{B(x,r)}\text{ is compact}\}\) is an open set.

Next we want to show that \(\overline{B(x,\rho)}\) is compact if \(\overline{B(x,r)}\) is compact for all \(r<\rho\).
We do this by showing sequential compactness.

Let \((y_j)_{j\in\N}\) be a sequence in \(\overline{B(x,\rho)}\), and let \((\varepsilon_i)_{i\in\N}\) be a sequence of real numbers \(0<\varepsilon_i<\rho\) converging to 0, i.e. \(\varepsilon_i\to 0\) when \(i\to \infty\).

Given \(y_j\) and \(\varepsilon_i\), we want to choose a point \(x_j^i\) such that 
\[x_j^i\in \overline{B(y_j,\varepsilon_i)}\quad\text{ and }\quad x_j^i\in \overline{B(x,\rho-\varepsilon_i/2)}.\]

As \(X\) is a length space there is a continuous curve \(\sigma:[0,1]\to X\) between \(y_j=\sigma(1)\) and \(x=\sigma(0)\) with \(d(x,y_j)\leq L(\sigma)\leq d(x,y_j)+\varepsilon_i/2\).
By Proposition \ref{prop:cont_res} the restricted length function \(\length{[0,t]}\) is continuous in \(t\), so we can choose a \(t\) such that \(L(\sigma|_{[t,1]})=\varepsilon_i\).
Set \(x_j^i=\sigma(t)\).
We now have \(d(y_j,x_j^i)\leq L(\sigma|_{[t,1]})=\varepsilon_i\) so this \(x_j^i\) satisfies the first condition.
Furthermore, 
\[d(x,x_j^i)+\varepsilon_i\leq \length{[0,t]}+\length{[t,1]}=\length{[0,t]}+\varepsilon_i= L(\sigma)\leq d(x,y_j)+\varepsilon_i/2\leq\rho+\varepsilon_i/2.\]
Now \(d(x,x_j^i)+\varepsilon_i\leq\rho+\varepsilon_i/2\) so \(d(x,x_j^i)\leq \rho(x)-\varepsilon/2\), showing that \(x_j^i\) satisfies the second property.

Now as \((x_j^i)_j\in \overline{B(x,\rho-\varepsilon_i/2)}\) which is compact for any \(i\in \N\), \((x_j^1)_{j\in\N}\) has a convergent subsequence \((x_{j_k^1}^1)_{k\in\N}\).
By the same reasoning the sequence \((x_{j_k^1}^2)_{k\in\N}\) has a convergent subsequence \((x_{j_k^2}^2)_{k\in\N}\).
Using the same argument again, the sequence \((x_{j_k^2}^3)_{k\in\N}\) has a convergent subsequence \((x_{j_k^3}^3)_{k\in\N}\), and so on.
For any \(p\in\N\) we can then keep repeating the same argument to get a sequence \((j_k^p)_{k\in\N}\) such that \((x_{j_k^p}^i)_{k\in\N}\) converges for all \(i\leq p\).
A diagonalization argument then gives us a sequence of integers \((j_k)_{k\in\N}\) such that \((x_{j_k}^p)_{k\in\N}\) converges for all \(p\).

Consider now the subsequence \((y_{j_k})_{k\in\N}\) of \((y_j)_{j\in\N}\).
As \((\varepsilon_i)\to 0\) we can choose an \(i\) such that \(\varepsilon_i<\varepsilon\).
Now all the \((x_{j_k}^p)_{k\in \N}\) are convergent for all \(p\), and therefore Cauchy, so by definition \(d(x_{j_k}^i,x_{j_l}^i)\leq \varepsilon\) for sufficiently large \(k,l\).
By the triangle inequality:
\[d(y_{j_k},y_{j_l})\leq d(y_{j_k},x_{j_k}^i)+d(x_{j_k}^i,x_{j_l}^i)+d(x_{j_l}^i,y_{j_l})\leq \varepsilon_i+\varepsilon+\varepsilon_i\leq 3\varepsilon.\]
As \(X\) is assumed complete, the subsequence \((y_{j_k})\) of \((y_j)\) converges, whereby \(\overline{B(x,\rho)}\) is sequentially compact and therefore compact as was to be shown.

Assuming that \(\rho<\infty\) we then have that \(\overline{B(x,\rho)}\) is compact.
But this implies as shown before that there exists a \(\delta>0\) such that \(\overline{B(x,\rho+\delta)}\) is compact which contradicts \(\rho\) being an upper bound.
Therefore \(\rho=\infty\) and we are done with the first part of the theorem.

\item Let \(x,y\in X\).
Since \(X\) is a length space it admits \(\varepsilon\)-midpoints by Lemma \ref{lem:midpoints} for all \(\varepsilon>0\).
Let \((z_j)_{j\in\N}\) be a sequence of \(\frac{1}{j}\)-midpoints between \(x\) and \(y\).
All of these lie in the closed bounded subset \(\overline{B(x,\frac{1}{2}d(x,y)+1)}\), which is compact by part (1). Therefore \((z_j)_{j\in\N}\) has a convergent subsequence.
We show that such a subsequence converges to a midpoint.
Assume for contradiction that the subsequence converges to something that is \emph{not} a midpoint. 
Then there exists a \(N\) such that \(j>N\) implies that \(d(z_j,x)> \frac{1}{2}d(x,y)+1/j\) or \(d(z_j,y)>\frac{1}{2}d(x,y)+1/j\). 
This contradicts \(z_j\) being a \(1/j\)-midpoint for all \(j\), so the subsequence must therefore converge to a midpoint.
By completeness this limit lies in \(X\), whereby \(X\) admits midpoints.
Lemma \ref{lem:midpoints} now proves that \(X\) is a geodesic space.
\end{enumerate}
\end{proof}

An immediate consequence of the Hopf--Rinow Theorem is the following: 
\begin{corollary}
A length space is proper if and only if it is complete and locally compact.
\end{corollary}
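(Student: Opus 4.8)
The plan is to split the biconditional into its two implications and to observe that one of them has already been established. The implication that a complete and locally compact length space is proper is exactly part (1) of Theorem \ref{thm:Hopf_Rinow}, so for that half I would simply cite the theorem and move on. The substance of the corollary therefore lies entirely in the converse, and it is worth remarking at the outset that this converse uses nothing about length spaces: it holds for an arbitrary metric space, and the length-space hypothesis is only needed to invoke Hopf--Rinow in the other direction.

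For the converse I would assume \((X,d)\) is proper and establish completeness and local compactness separately. Local compactness I would get immediately: for any \(x\in X\) the closed ball \(\overline{B}(x,1)=\{y\in X:d(x,y)\leq 1\}\) is closed, being the preimage of \([0,1]\) under the continuous map \(d(x,\cdot)\), and it is bounded, so by properness it is compact; since it contains the open ball \(B(x,1)\), it is a compact neighborhood of \(x\), and as \(x\) was arbitrary \(X\) is locally compact.

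For completeness I would take a Cauchy sequence \((x_n)_{n\in\N}\) and first confine it to a compact set. A Cauchy sequence is bounded: choosing \(N\) so that \(d(x_n,x_m)<1\) for all \(n,m\geq N\) forces every tail term into \(\overline{B}(x_N,1)\), while the finitely many earlier terms are automatically at finite distance from \(x_N\), so the whole sequence lies in a single closed bounded ball about \(x_N\). By properness this ball is compact, hence complete (equivalently, sequentially compact), and a Cauchy sequence in a complete space converges; its limit lies in the ball and thus in \(X\), so \(X\) is complete. I do not anticipate a genuine obstacle in any of this: the only steps requiring a little care are verifying that a Cauchy sequence is bounded and invoking the standard fact that a compact metric space is complete.
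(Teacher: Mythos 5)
Your proof is correct and follows exactly the route the paper intends: the paper states this corollary as an immediate consequence of Theorem \ref{thm:Hopf_Rinow} without writing out details, and your argument supplies precisely those details --- part (1) of Hopf--Rinow for one implication, and the standard metric-space facts (closed balls are compact neighborhoods; Cauchy sequences are bounded, hence lie in a compact ball and converge) for the converse. Your observation that the converse needs no length-space hypothesis is also accurate and a worthwhile remark.
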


\chapter{Non-positive curvature}
\section{Non-positive curvature in the sense of Alexandrov}

Given three points \(x,y,z\in X\) in a geodesic space \((X,d)\), we can by definition find geodesics \(\gamma_1,\gamma_2,\gamma_3\) with lengths \(\ell_1,\ell_2,\ell_3\) between those three points, together forming a 
\textit{geodesic triangle} \GTri.
This gives rise to the concept of a \textit{comparison triangle}:
\begin{definition}
Given a geodesic triangle \(\GTri\) in a geodesic space \((X,d)\), a \textit{comparison triangle} is a triangle \(\CTri\) in \(\E^2\) with vertices \(\overline{x},\overline{y},\overline{z}\) having the same side lengths as \GTri.
A point \(p\in [\overline{x},\overline{y}]\) is called a \textit{comparison point} for \(p\in [x,y]\) if \(d(x,p)=d(\overline{x},\overline{p})\), and similarly for points on \([x,z],[y,z]\).
The inner angle of \(\CTri\) at \(\overline{x}\) is called the \textit{comparison angle} between \(y\) and \(z\) at \(x\), denoted \(\overline{\angle}_x(y,z)\), or sometimes \(\angle_{\overline{x}}(\overline{y},\overline{z})\).
\end{definition}

It is important for our following discussion that such a triangle actually exists.
\begin{proposition}
    Every geodesic triangle in a geodesic space \((X,d)\) has a corresponding comparison triangle.
\end{proposition}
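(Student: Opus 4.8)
The plan is to reduce the statement to a purely Euclidean fact: that three non-negative real numbers satisfying all three triangle inequalities can be realised as the side lengths of a (possibly degenerate) triangle in $\E^2$. First I would record that the three side lengths of the geodesic triangle $\GTri$ are exactly the metric distances $d(x,y)$, $d(y,z)$, and $d(x,z)$. Indeed, each side is by definition a geodesic joining two of the vertices, and we observed earlier that a geodesic $\gamma$ between two points $p,q$ satisfies $L(\gamma)=d(p,q)$; hence the lengths $\ell_1,\ell_2,\ell_3$ coincide with $d(x,y),d(y,z),d(z,x)$. Since $d$ is a metric, the triangle inequality yields $d(x,z)\le d(x,y)+d(y,z)$ together with the two analogous inequalities, so the three numbers satisfy all triangle inequalities.

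Next I would construct the comparison triangle $\CTri$ explicitly in coordinates. Set $\overline{x}=(0,0)$ and $\overline{y}=(d(x,y),0)$, so that the distance between $\overline{x}$ and $\overline{y}$ equals $d(x,y)$ automatically. It then remains to place $\overline{z}$ so that the distance from $\overline{x}$ is $d(x,z)$ and the distance from $\overline{y}$ is $d(y,z)$; that is, $\overline{z}$ must lie in the intersection of the circle of radius $d(x,z)$ about $\overline{x}$ and the circle of radius $d(y,z)$ about $\overline{y}$. The standard criterion for two circles of radii $r_1,r_2$ whose centres are a distance $a$ apart to meet is precisely $|r_1-r_2|\le a\le r_1+r_2$, which here is exactly the system of triangle inequalities established in the first step; hence such a point $\overline{z}$ exists. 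Concretely, I would solve for the angle $\theta=\overline{\angle}_x(y,z)$ at $\overline{x}$ via the law of cosines, $\cos\theta=\frac{d(x,y)^2+d(x,z)^2-d(y,z)^2}{2\,d(x,y)\,d(x,z)}$, and set $\overline{z}=\bigl(d(x,z)\cos\theta,\ d(x,z)\sin\theta\bigr)$; a short computation then confirms all three side lengths.

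The crux of the argument is verifying that this construction never breaks down, i.e. that $\cos\theta$ as defined always lies in $[-1,1]$, and I expect this to be the main (and essentially only) obstacle. It is resolved by observing that $\cos\theta\le 1$ is equivalent to $(d(x,y)-d(x,z))^2\le d(y,z)^2$, i.e. to $|d(x,y)-d(x,z)|\le d(y,z)$, while $\cos\theta\ge -1$ is equivalent to $d(y,z)\le d(x,y)+d(x,z)$ — both again consequences of the triangle inequality for $d$. The boundary cases $\cos\theta=\pm1$ correspond to a degenerate (collinear) triangle, which occurs exactly when one of the triangle inequalities is an equality; these remain admissible as comparison triangles. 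Finally, one should note that if $d(x,y)$ or $d(x,z)$ vanishes the formula for $\cos\theta$ is undefined, but then two of the vertices coincide and $\overline{z}$ may be chosen directly, so the construction still goes through. This completes the construction of $\CTri$.
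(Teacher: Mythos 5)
Your proof is correct and takes essentially the same approach as the paper: place a segment $[\overline{x},\overline{y}]$ of length $d(x,y)$ in $\E^2$ and obtain $\overline{z}$ as an intersection point of two circles, whose intersection is guaranteed by the triangle inequality. The only difference is that you carry out the details the paper leaves implicit --- identifying the side lengths with the distances $d(x,y),d(y,z),d(x,z)$, verifying the circles actually meet via the law of cosines, and handling the degenerate cases --- which strengthens rather than changes the argument.
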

\begin{proof}
    Let \(\GTri\) be a geodesic triangle in \((X,d)\) with side lengths \(a,b\) and \(c\).
    Let \([\overline{x},\overline{y}]\) be a line segment in \(\E^2\) with length \(a\).
    Denote by \(C_1\) the circle with center \(\overline{x}\) and radius \(b\) and denote by \(C_2\) the circle with center \(\overline{y}\) and radius \(c\).
    These must intersect at some points \(\overline{z}_1,\overline{z}_2\) due to the triangle inequality.
    Now the triangle with vertices \(\overline{x},\overline{y},\overline{z_1}\) and the triangle with vertices \(\overline{x},\overline{y},\overline{z_2}\) both have side lengths \(a,b\) and \(c\) as desired.    
\end{proof}

With comparison triangles defined we can now define a generalization of the inner angle from Euclidean geometry in \emph{any} geodesic metric space.
\begin{definition}
Let \((X,d)\) be a geodesic metric space, and let \(\gamma_1:[0,1]\to X\) and \(\gamma_2:[0,1]\to X\) be two geodesic paths such that \(\gamma_1(0)=\gamma_2(0)=p\).
The \textit{Alexandrov angle} or the \textit{upper angle} between \(\gamma_1\) and \(\gamma_2\) is defined by
\[\angle(\gamma_1,\gamma_2)=\limsup_{t,t'\to 0}\overline{\angle}_{p}(\gamma_1(t),\gamma_2(t'))=\lim_{\varepsilon\to 0}\sup_{0<t,t'<\varepsilon}\overline{\angle}_{p}(\gamma_1(t),\gamma_2(t'))\leq\pi.\]
The \textit{angular excess} of a geodesic triangle \(\Delta=\GTri\) is defined to be \(\delta(\GTri)=\alpha+\beta+\gamma-\pi\), where \(\alpha,\beta\) and \(\gamma\) are the upper angles at \(x,y\) and \(z\).
\end{definition}
If \(\lim_{t,t'\to 0}\overline{\angle}_{p}(\gamma_1(t),\gamma_2(t'))\) exists we say the angle exists \textit{in the strict sense}.

\begin{definition}\label{def:geo_con}
Let \((X,d)\) be a metric space, and let \(x\in X\).
If \(x\) admits a neighborhood \(U\subseteq X\) such that any two points \(a,b\in U\) can be joined by a geodesic contained in \(U\), we call \(U\) a \textit{geodesically convex neighborhood} of \(x\).
\end{definition}

\begin{figure}\label{fig:alex_angle}
\setlength{\belowcaptionskip}{-10pt}
    \centering
    \includegraphics[width=\textwidth/2]{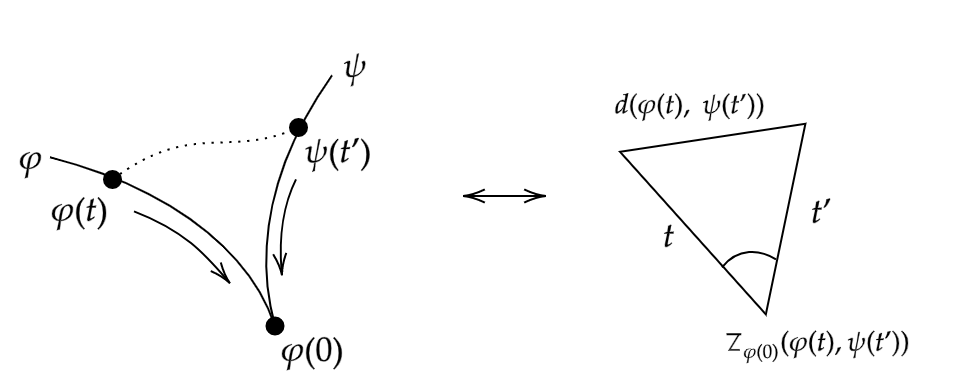}
    \includegraphics[width=\textwidth/2]{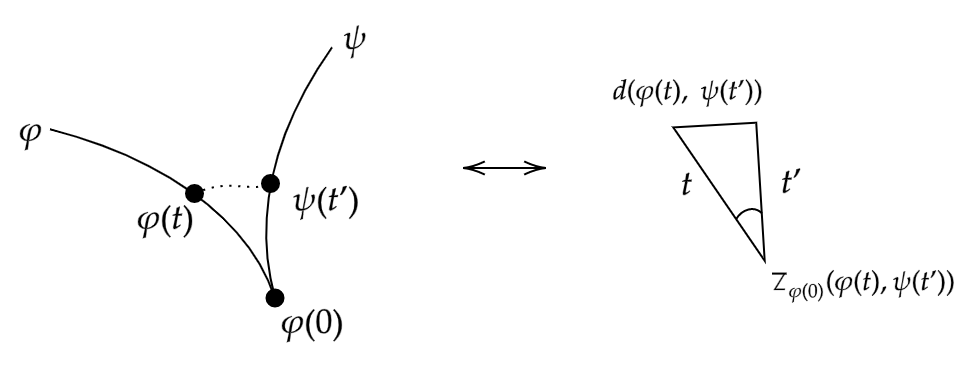}
    \caption{\cite{AngleWeb} The comparison angle changes depending on which points along the geodesics we choose, so we need the Alexandrov angle to give one value. 
    Here the geodesics are labelled \(\varphi,\psi\) instead of \(\gamma_1,\gamma_2\).}
\end{figure}
We are now ready to state the main definition of this section:
\begin{definition}
Let \((X,d)\) be a metric space.
If every point \(x\in X\) has a geodesically convex neighborhood \(U\) such that the angular excess of any geodesic triangle with edges in \(U\) is non-positive, \(X\) is said to be \textit{non-positively curved in the sense of Alexandrov}.
\end{definition}
\begin{remark}
In \(\E^2\), the Alexandrov angle and the usual inner angle coincide.
\end{remark}

The angle function \((\gamma_1,\gamma_2)\mapsto\angle(\gamma_1,\gamma_2)\) is obviously symmetric and positive.
However, it is possible that two geodesics are identical in a sufficiently small neighborhood around \(p\) in which case \(\angle(\gamma_1,\gamma_2)=0\).
This shows that \(\angle\) is not positively definite, so \(\angle\) is not a metric.
The following Proposition however shows that the triangle inequality holds, whereby \(\angle\) \emph{is} a \textit{pseudometric}:

\begin{proposition}\label{prop:pseudo}
Let \((X,d)\) be a metric space with three geodesics \(\gamma_1,\gamma_2,\gamma_3\) all issuing from a point \(p=\gamma_1(0)=\gamma_2(0)=\gamma_3(0)\).
Then the triangle inequality holds:
\[\angle(\gamma_1,\gamma_2)\leq \angle(\gamma_1,\gamma_3)+\angle(\gamma_3,\gamma_2).\]
\end{proposition}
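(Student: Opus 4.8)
The plan is to reduce the statement to a single inequality about Euclidean comparison angles and then prove that inequality by an unfolding construction in $\E^2$. First I would unwind the definitions. Write $\alpha=\angle(\gamma_1,\gamma_3)$ and $\beta=\angle(\gamma_3,\gamma_2)$. Since each Alexandrov angle is a limit of suprema, for every $\delta>0$ there is an $\varepsilon>0$ so that
\[
\overline{\angle}_p(\gamma_1(s),\gamma_3(u))\le\alpha+\tfrac{\delta}{2},\qquad \overline{\angle}_p(\gamma_3(u),\gamma_2(s'))\le\beta+\tfrac{\delta}{2}
\]
for all parameters $0<s,s',u<\varepsilon$. Arguing by contradiction, if the triangle inequality failed I could pick $\delta>0$ with $\angle(\gamma_1,\gamma_2)>\alpha+\beta+\delta$, and then the $\limsup$ defining $\angle(\gamma_1,\gamma_2)$ forces arbitrarily small $s,s'<\varepsilon$ with $\overline{\angle}_p(\gamma_1(s),\gamma_2(s'))>\alpha+\beta+\delta$. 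Hence it suffices to produce, for such $s,s'$, a single parameter $u<\varepsilon$ with
\[
\overline{\angle}_p(\gamma_1(s),\gamma_2(s'))\le \overline{\angle}_p(\gamma_1(s),\gamma_3(u))+\overline{\angle}_p(\gamma_3(u),\gamma_2(s')),
\]
since combining this with the two bounds above gives $\overline{\angle}_p(\gamma_1(s),\gamma_2(s'))\le\alpha+\beta+\delta$, the desired contradiction.

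For the core inequality I would work entirely in $\E^2$. Writing $x=\gamma_1(s)$, $z=\gamma_3(u)$, $y=\gamma_2(s')$, I take the Euclidean comparison triangles for the triples $(p,x,z)$ and $(p,z,y)$ and glue them along their common edge (which has length $d(p,z)$), placing them on opposite sides so that the two comparison angles at the apex $\overline p$ add up to $\overline{\angle}_p(x,z)+\overline{\angle}_p(z,y)$. In this planar figure the two outer sides have lengths $d(p,x)$ and $d(p,y)$, while the two broken edges have lengths $d(x,z)$ and $d(z,y)$. If the two apex angles already sum to at least $\pi$ there is nothing to prove, as the left-hand side never exceeds $\pi$; so assume the sum is less than $\pi$. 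The Euclidean law of cosines makes the length of the side opposite a given apex angle a strictly increasing function of that angle once the two adjacent sides $d(p,x),d(p,y)$ are fixed. Therefore the claimed inequality is equivalent to $d(x,y)\le|\overline x\,\overline y|$, where $|\overline x\,\overline y|$ is the distance between the two outer vertices of the unfolded figure.

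The last inequality is exactly where the argument must use the geodesic hypothesis, and this is the step I expect to be the main obstacle: the analogous statement for four arbitrary points of a metric space is \emph{false} (one can rig four mutual distances satisfying every triangle inequality yet violating comparison-angle subadditivity), so the passage to the limit and the freedom to slide $z$ along $\gamma_3$ are essential. My plan is to choose the intermediate parameter $u$ by an intermediate value argument so that the unfolded points $\overline x,\overline z,\overline y$ become collinear. Because $\gamma_3$ is continuous and $d(p,\gamma_3(u))$ varies continuously with $u$, the base angles at $\overline z$ of the two glued comparison triangles vary continuously, so their defect from a straight angle changes sign as $u$ ranges over a suitable subinterval of $(0,\varepsilon)$; at the value where $\overline z$ lands on the segment $[\overline x,\overline y]$ one has $|\overline x\,\overline y|=d(x,z)+d(z,y)$, and the triangle inequality $d(x,y)\le d(x,z)+d(z,y)$ in $(X,d)$ then yields $d(x,y)\le|\overline x\,\overline y|$. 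The delicate heart of the proof is guaranteeing that such a $u$ exists below $\varepsilon$, together with disposing of the degenerate configurations in which $\gamma_3$ effectively points outside the sector spanned by $\gamma_1$ and $\gamma_2$ (these are absorbed by the trivial bound $\le\pi$).
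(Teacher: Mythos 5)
Your reduction is sound and in fact parallels the paper's: arguing by contradiction, fixing \(\delta\), choosing \(\varepsilon\) so that the two comparison angles involving \(\gamma_3\) are controlled, and picking \(s,s'<\varepsilon\) where \(\overline{\angle}_p(\gamma_1(s),\gamma_2(s'))\) is large — all of that is exactly what the paper does, and the observation that it then suffices to find one parameter \(u<\varepsilon\) with \(\overline{\angle}_p(x,y)\leq \overline{\angle}_p(x,\gamma_3(u))+\overline{\angle}_p(\gamma_3(u),y)\) is correct. The gap is precisely where you flagged it, and it is genuine: your intermediate value argument needs the angle sum \(S(u)=\overline{\angle}_{\gamma_3(u)}(p,x)+\overline{\angle}_{\gamma_3(u)}(p,y)\) at the glued vertex \(\overline{z}\) to hit \(\pi\) exactly for some \(u\in(0,\varepsilon)\). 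One half of the IVT is available: as \(u\to 0^+\) the base angles at \(\overline{x},\overline{y}\) vanish, and since the contradiction hypothesis forces \(\alpha+\beta+\delta<\pi\), one gets \(S(u)>\pi\) for all small \(u\). But nothing whatsoever in the hypotheses gives the other half. All your quantitative control (the bounds by \(\alpha+\delta/2\) and \(\beta+\delta/2\), and the largeness of \(\overline{\angle}_p(x,y)\)) concerns comparison angles \emph{based at \(p\)}; the function \(S(u)\) is built from comparison angles \emph{based at \(\gamma_3(u)\)}, which the hypotheses do not constrain at all. For instance, if \(\gamma_3\) happens to run along \(\gamma_1\) one has \(S\equiv 2\pi\) on \((0,\min(s,s'))\), so continuity plus the behaviour at \(0\) can never, by itself, produce the sign change; any proof of it would have to extract information about angles at \(\gamma_3(u)\) from angles at \(p\), and no such mechanism is offered. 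Note also that exact collinearity is indispensable for your scheme: if \(\overline{z}\notin[\overline{x},\overline{y}]\) (whether \(S(u)>\pi\) or \(S(u)<\pi\)), the triangle inequality in \(X\) only yields \(d(x,y)\leq d(x,z)+d(z,y)=|\overline{x}\,\overline{z}|+|\overline{z}\,\overline{y}|\), which is strictly weaker than the needed \(d(x,y)\leq|\overline{x}\,\overline{y}|\), so you cannot fall back on an inequality version of the collinearity claim.

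The paper's proof shows how to sidestep this entirely, and the difference is instructive. Instead of tuning \(u\) so that a metric-space-dependent quantity (the unfolded angle at \(\overline{z}\)) equals \(\pi\), the paper \emph{prescribes} the Euclidean picture first: it builds a triangle in \(\E^2\) with apex angle \(\alpha\) (chosen strictly between \(\angle(\gamma_1,\gamma_3)+\angle(\gamma_3,\gamma_2)+2\delta\) and \(\overline{\angle}_p(\gamma_1(t_1),\gamma_2(t_2))\)) and adjacent sides \(t_1,t_2\), splits \(\alpha=\alpha_1+\alpha_2\) with \(\alpha_1>\angle(\gamma_1,\gamma_3)+\delta\) and \(\alpha_2>\angle(\gamma_3,\gamma_2)+\delta\), and then defines the parameter on \(\gamma_3\) to be the length \(t\) of the resulting cevian \([y,x_3]\), which automatically satisfies \(t<\max\{t_1,t_2\}<\varepsilon\). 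Collinearity of \(x_1,x_3,x_2\) now holds by construction, and the law-of-cosines monotonicity converts the angle comparisons into \(d(\gamma_1(t_1),\gamma_3(t))<|x_1-x_3|\), \(d(\gamma_3(t),\gamma_2(t_2))<|x_3-x_2|\), and \(d(\gamma_1(t_1),\gamma_2(t_2))>|x_1-x_2|=|x_1-x_3|+|x_3-x_2|\), violating the triangle inequality in \(X\). To repair your proof you would essentially have to make this same move — replace the IVT by pulling back the Euclidean cevian length as the choice of \(u\) — at which point the argument becomes the paper's.
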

\begin{proof}
Assume for contradiction that \(\angle(\gamma_1,\gamma_2)>\angle(\gamma_1,\gamma_3)+\angle(\gamma_3,\gamma_2)\).
We can then choose \(\delta\) such that \(\angle(\gamma_1,\gamma_2)>\angle(\gamma_1,\gamma_3)+\angle(\gamma_3,\gamma_2)+3\delta\).
From properties of limsup we get the existence of an \(\varepsilon\) such that:
\parenum
\begin{enumerate}
    \item \(\overline{\angle}_p(\gamma_1(t_1),\gamma_3(t_3))<\angle(\gamma_1,\gamma_3)+\delta\) for all \(0<t_1,t_3<\varepsilon\),
    \item \(\overline{\angle}_p(\gamma_2(t_2),\gamma_3(t_3))<\angle(\gamma_2,\gamma_3)+\delta\) for all \(0<t_2,t_3<\varepsilon\), and
    \item \(\overline{\angle}_p(\gamma_1(t_1),\gamma_2(t_2))>\angle(\gamma_1,\gamma_2)-\delta\) for some \(0<t_1,t_2<\varepsilon\).
\end{enumerate}
Choose \(t_1,t_2\) such that (3) holds, and choose \(\alpha\) such that \(\overline{\angle}_p(\gamma_1(t_1),\gamma_2(t_2))>\alpha >\angle(\gamma_1,\gamma_1)-\delta\).

Now construct a triangle \(\overline{\Delta}(x_1,y,x_2)\)in \(\E^2\) such that the sides \([x_1,y]\) and \([y,x_2]\) meet at \(y\) with an inner angle \(0<\alpha<\pi\) with sidelengths \(t_1\) and \(t_2\), respectively.

As \(0<\alpha<\pi\), the triangle is non-degenerate.
Combining choices of \(\alpha\) and \(\delta\):
\begin{align*}
\alpha  &>\angle(\gamma_1,\gamma_2)-\delta          &\text{by choice of } \alpha\\
        &>\angle(\gamma_1,\gamma_3)+\angle(\gamma_2,\gamma_3)+3\delta-\delta    &\text{by choice of }\delta\\
        &=\angle(\gamma_1,\gamma_2)+\angle(\gamma_1,\gamma_3)+2\delta.
\end{align*}

Therefore we can choose a point \(x_3\) on the side \([x_1,x_2]\) splitting the triangle into two smaller triangles with the line \([x_3,y]\), such that the interior angles \(\alpha_1,\alpha_2\) between \([x_1,y],[y,x_3]\) and between \([x_2,y],[y,x_3]\), respectively, satisfy \(\alpha_1>\angle(\gamma_1,\gamma_3)+\delta\) and \(\alpha_2>\angle(\gamma_2,\gamma_3)+\delta\).
Denote by \(t\) the length of \([y,x_3]\), which satisfies \(t<\max\{t_1,t_2\}\) implying \(t<\varepsilon\).
Using inequalities (1) and (2) we get 
\begin{align*}
\overline{\angle}_p(\gamma_1(t_1),\gamma_3(t))    &<\angle(\gamma_1,\gamma_3)+\delta<\alpha_1\\
\overline{\angle}_p(\gamma_2(t_2),\gamma_3(t))   &<\angle(\gamma_2,\gamma_3)+\delta<\alpha_2.
\end{align*}
As the angle in the comparison triangles are smaller, the distances in the comparison triangle (and thereby also the distances in \(X\)) must be smaller, so \(d(\gamma_3(t),\gamma_2(t_2))<|x_3-x_2|\) and \(d(\gamma_1(t_1),\gamma_3(t))<|x_3-x_1|\).
This combined with \(\overline{\angle}_p(\gamma_1(t_1),\gamma_2(t_2))>\alpha\) yields
\begin{align*}
d(\gamma_2(t_2),\gamma_1(t_1))  &>|x_1-x_2|\\
                    &=|x_3-x_1|+|x_3-x_2|\\
                    &>d(\gamma_3(t),\gamma_2(t_2))+d(\gamma_1(t_1),\gamma_3(t)),
\end{align*}
contradicting the triangle inequality of the metric \(d\) and so finishing the proof.
\end{proof}

To end off this section we will state and prove Alexandrov's Lemma.
\begin{lemma}[Alexandrov's Lemma]\label{lem:alex}
    Let \((X,d)\) be a geodesic metric space, and let \(p,x,y,z\) be distinct points in \(X\) with \(z\in [x,y]\).
    Then the expressions
    \begin{align*}
        &\overline{\angle}_x([x,p],[x,y])-\overline{\angle}_x([x,p],[x,z])\\
        &\overline{\angle}_z([z,p],[z,x])+\overline{\angle}_z([z,p],[z,y])-\pi
    \end{align*}
    have the same sign.
\end{lemma}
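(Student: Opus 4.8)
The plan is to collapse both expressions into a single planar comparison figure and read off their signs from one trigonometric identity. Abbreviate the two angles of the first expression as $\alpha:=\overline{\angle}_x([x,p],[x,y])$ and $\alpha':=\overline{\angle}_x([x,p],[x,z])$, and those of the second as $\beta_1:=\overline{\angle}_z([z,p],[z,x])$ and $\beta_2:=\overline{\angle}_z([z,p],[z,y])$. All four depend only on the pairwise distances among $p,x,y,z$, and the hypothesis $z\in[x,y]$ contributes the single relation $d(x,z)+d(z,y)=d(x,y)$. I would first build the figure in $\E^2$: take the comparison triangle $\overline{\Delta}(\overline{x},\overline{p},\overline{z})$ for $(x,p,z)$, whose angle at $\overline{x}$ is exactly $\alpha'$ and whose angle at $\overline{z}$ between $\overline{p}$ and $\overline{x}$ is exactly $\beta_1$. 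Extend $[\overline{x},\overline{z}]$ beyond $\overline{z}$ to a point $\overline{y}_0$ with $|\overline{z}\,\overline{y}_0|=d(z,y)$, so that $\overline{z}\in[\overline{x},\overline{y}_0]$ and $|\overline{x}\,\overline{y}_0|=d(x,z)+d(z,y)=d(x,y)$. Since $\overline{z}$ and $\overline{y}_0$ lie on the same ray out of $\overline{x}$, the angle of $\overline{\Delta}(\overline{x},\overline{p},\overline{y}_0)$ at $\overline{x}$ is still $\alpha'$; set $\ell:=|\overline{p}\,\overline{y}_0|$, the (possibly ``wrong'') Euclidean distance produced by this construction.

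The first reduction is a monotonicity argument. The genuine comparison triangle computing $\alpha$ shares the two sides $d(x,p)$ and $d(x,y)$ at $\overline{x}$ with $\overline{\Delta}(\overline{x},\overline{p},\overline{y}_0)$, differing only in the opposite side, which is $d(p,y)$ rather than $\ell$. Because the law of cosines makes the enclosed angle a strictly increasing function of the opposite side when the two adjacent sides are fixed, $\alpha-\alpha'$ has the same sign as $d(p,y)-\ell$, hence the same sign as $d(p,y)^2-\ell^2$.

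Next I compute both squared lengths by the law of cosines at $\overline{z}$. As $\overline{y}_0$ lies on the ray opposite to $\overline{x}$ from $\overline{z}$, the angle $\angle_{\overline{z}}(\overline{p},\overline{y}_0)=\pi-\beta_1$, giving $\ell^2=d(z,p)^2+d(z,y)^2+2\,d(z,p)\,d(z,y)\cos\beta_1$, whereas the honest comparison triangle for $(z,p,y)$ has angle $\beta_2$ at $\overline{z}$ and so $d(p,y)^2=d(z,p)^2+d(z,y)^2-2\,d(z,p)\,d(z,y)\cos\beta_2$. Subtracting, $d(p,y)^2-\ell^2=-2\,d(z,p)\,d(z,y)(\cos\beta_1+\cos\beta_2)$, whose sign is that of $-(\cos\beta_1+\cos\beta_2)$ since $d(z,p),d(z,y)>0$. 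Finally the identity $\cos\beta_1+\cos\beta_2=2\cos\tfrac{\beta_1+\beta_2}{2}\cos\tfrac{\beta_1-\beta_2}{2}$, together with $\beta_1,\beta_2\in[0,\pi]$ so that $\cos\tfrac{\beta_1-\beta_2}{2}\ge 0$, shows this sign equals that of $-\cos\tfrac{\beta_1+\beta_2}{2}$, which is positive exactly when $\beta_1+\beta_2>\pi$. Chaining the three sign equalities yields that $\alpha-\alpha'$ and $\beta_1+\beta_2-\pi$ share a sign, as claimed.

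I expect the main difficulty to be bookkeeping rather than conceptual depth. I must verify carefully that $\overline{y}_0$ genuinely sits on the far side of $\overline{z}$, so that the supplementary angle $\pi-\beta_1$ (not $\beta_1$) enters the law of cosines, and I should check the degenerate case $\cos\tfrac{\beta_1-\beta_2}{2}=0$: this forces $\{\beta_1,\beta_2\}=\{0,\pi\}$, hence simultaneously $\beta_1+\beta_2-\pi=0$ and $\cos\beta_1+\cos\beta_2=0$, so $\alpha=\alpha'$ as well and all three quantities vanish together, leaving the conclusion intact. Existence of every comparison triangle used above is automatic, since pairwise distances in a metric space always satisfy the triangle inequality.
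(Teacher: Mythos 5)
Your proof is correct and is essentially the paper's own argument: both extend the comparison segment $[\overline{x},\overline{z}]$ beyond $\overline{z}$ to an auxiliary point at distance $d(x,y)$ from $\overline{x}$, and both reduce the claim to comparing $d(p,y)$ with the auxiliary length $\ell$ using the side--angle monotonicity of the law of cosines. The only divergence is the final step: the paper turns the sign of $d(p,y)-\ell$ into the sign of $\beta_1+\beta_2-\pi$ by a second appeal to that same monotonicity at $\overline{z}$ combined with supplementary angles, whereas you do it by an explicit law-of-cosines computation and the sum-to-product identity, which has the minor benefit of exhibiting the degenerate case $\{\beta_1,\beta_2\}=\{0,\pi\}$ explicitly where the paper's terse phrasing passes over it.
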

\begin{proof}
Extend the comparison line \([\overline{x},\overline{z}]\) to a line \([\overline{x},\tilde{y}]\) such that \(d(x,y)=d(\overline{x},\tilde{y})\), whereby \(d(\overline{x},\overline{z})=d(x,z)\) and \(d(\overline{z},\tilde{y})=d(z,y)\).
Now as increasing an angle makes the opposite side longer and vice versa, the following expressions must have the same sign:
\begin{align*}
    \angle_{\overline{x}}([\overline{x},\overline{p}],[\overline{x},\overline{y}])
            &-  \angle_{\overline{x}}([\overline{x},\overline{p}],[\overline{x},\tilde{y}])\\
    d(p,y)  &-  d(\overline{p},\tilde{y})\\
    \angle_{\overline{z}}([\overline{z},\overline{p}],[\overline{z},\overline{y}])  &-  \angle_{\overline{z}}           ([\overline{z},\overline{p}],[\overline{z},\tilde{y}]).
\end{align*}
Because of how we chose \(\tilde{y}\) we have \(\angle_{\overline{x}}([\overline{x},\overline{p}],[\overline{x},\tilde{y}])=\angle_{\overline{x}}([\overline{x},\overline{p}],[\overline{x},\overline{z}])\).
As we are in \(\E^2\) also \(\angle_{\overline{z}}([\overline{z},\overline{p}],[\overline{z},\tilde{y}]=\pi-\angle_{\overline{z}}([\overline{z},\overline{p}],[\overline{z},\overline{x}])\), finishing the proof.
\end{proof}

\section{Non-positive curvature in the sense of Busemann}
Non-positive curvature in the sense of Alexandrov compares the angles of geodesic triangles to the angles in their corresponding comparison triangles.
Another approach to non-positive curvature would be to instead compare the length between midpoints of geodesic triangles to the length between midpoints in their corresponding comparison triangles.
This is formalized in the following definition:
\begin{definition}\label{def:neg_buse}
Let \((X,d)\) be a metric space.
\(X\) is said to have \textit{non-positive curvature in the sense of Busemann} if every point \(x\in X\) has a geodesically convex neighborhood \(U\) such that for any geodesic triangle \(\GTri[a][b][c]\) with edges in \(U\) we have 
\[d(m,m')\leq\frac{1}{2}d(b,c)=d(\overline{m},\overline{m}'),\]
where \(m\) and \(m'\) are the midpoints of \([a,b]\) and \([a,c]\), respectively, with comparison midpoints \(\overline{m},\overline{m}'\).
\end{definition}

An equivalent definition looks at the relation between \emph{any} two geodesics parametrized proportional to the arc length in the same neighborhood \(U\) of \(x\):
\begin{proposition}\label{prop:conv_mid}
Let \((X,d)\) be a metric space such that any \(x\in X\) is contained in a geodesically convex neighborhood \(U_x\subseteq X\). The following are equivalent:
\parenum
\begin{enumerate}
    \item Any geodesic triangle \(\Delta(a,b,c)\) in \(U_x\) satisfies \(d(m,m')\leq\frac{1}{2}d(b,c)\), where \(m\) and \(m'\) are the midpoints of \([a,b]\) and \([a,c]\) respectively.
    \item Each pair of geodesics \(\gamma_1:[0,1]\to U_x,\gamma_2:[0,1]\to U_x\) parametrized proportional to the arc length satisfy \(d(\gamma_1(t),\gamma_2(t))\leq (1-t)d(\gamma_1(0),\gamma_2(0))+td(\gamma_1(1),\gamma_2(1))\) for all \(t\in[0,1]\).
\end{enumerate}
\end{proposition}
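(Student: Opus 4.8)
The plan is to prove the two implications separately, with the forward direction (1) $\Rightarrow$ (2) being the substantial one. The implication (2) $\Rightarrow$ (1) is immediate: given a geodesic triangle $\Delta(a,b,c)$ in $U_x$, take $\gamma_1,\gamma_2\colon[0,1]\to U_x$ to be the edges $[a,b]$ and $[a,c]$ parametrized proportional to arc length, so that $\gamma_1(0)=\gamma_2(0)=a$, $\gamma_1(1)=b$, $\gamma_2(1)=c$. Then $\gamma_1(\tfrac12)=m$ and $\gamma_2(\tfrac12)=m'$, and evaluating the inequality in (2) at $t=\tfrac12$ gives $d(m,m')\le\tfrac12 d(\gamma_1(0),\gamma_2(0))+\tfrac12 d(b,c)=\tfrac12 d(b,c)$, since $d(\gamma_1(0),\gamma_2(0))=d(a,a)=0$. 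This is exactly (1).

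For the converse, the first and crucial step is to upgrade the triangle hypothesis (1) into a midpoint inequality for two \emph{arbitrary} geodesics: for any geodesics $\sigma_1,\sigma_2\colon[0,1]\to U_x$ parametrized proportional to arc length,
\[ d\!\left(\sigma_1(\tfrac12),\sigma_2(\tfrac12)\right)\le\tfrac12\big(d(\sigma_1(0),\sigma_2(0))+d(\sigma_1(1),\sigma_2(1))\big). \]
I would prove this by introducing a diagonal: writing $a=\sigma_1(0)$, $b=\sigma_1(1)$, $c=\sigma_2(0)$, $e=\sigma_2(1)$, choose (using geodesic convexity of $U_x$) a geodesic $[a,e]\subseteq U_x$ and let $p$ be its midpoint. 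Applying (1) to the triangle $\Delta(a,b,e)$ with apex $a$ gives $d(\sigma_1(\tfrac12),p)\le\tfrac12 d(b,e)$, and applying (1) to the triangle $\Delta(e,a,c)$ with apex $e$ gives $d(p,\sigma_2(\tfrac12))\le\tfrac12 d(a,c)$; the triangle inequality for $d$ then yields the claim. This diagonal construction, reducing a ``skew'' midpoint comparison to two triangles sharing the auxiliary point $p$, is the main obstacle and the only genuinely geometric idea in the argument.

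Once this midpoint inequality is in hand, the rest is a standard dyadic-induction-plus-continuity bootstrap. Set $f(t)=d(\gamma_1(t),\gamma_2(t))$ and let $g(t)=(1-t)d(\gamma_1(0),\gamma_2(0))+t\,d(\gamma_1(1),\gamma_2(1))$ be the affine interpolant of the endpoint values. Applying the midpoint inequality to the restrictions $\gamma_1|_{[s,s']}$ and $\gamma_2|_{[s,s']}$ (again geodesics in $U_x$, with midpoints $\gamma_i(\tfrac{s+s'}{2})$) shows that $f$ is midpoint-convex:
\[ f\!\left(\tfrac{s+s'}{2}\right)\le\tfrac12\big(f(s)+f(s')\big)\quad\text{for all } 0\le s\le s'\le 1. \]
Since $g$ is affine it satisfies $g(\tfrac{s+s'}{2})=\tfrac12(g(s)+g(s'))$ with equality, and $f(0)=g(0)$, $f(1)=g(1)$.

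A straightforward induction on the dyadic level $n$ then gives $f(k/2^n)\le g(k/2^n)$ for every dyadic rational in $[0,1]$: the inductive step compares the value at the midpoint $(2k+1)/2^{n+1}$ of two consecutive dyadic points, using midpoint-convexity of $f$ and midpoint-linearity of $g$. Finally, $f$ is continuous, being the composition of the continuous curves $\gamma_1,\gamma_2$ with the continuous metric $d$, and $g$ is continuous, so the inequality $f\le g$ extends from the dense set of dyadic rationals to all of $[0,1]$, which is exactly statement (2).
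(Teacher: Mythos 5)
Your proposal is correct and follows essentially the same route as the paper: the diagonal geodesic from $\sigma_1(0)$ to $\sigma_2(1)$ splitting the configuration into two triangles whose midpoint inequalities are combined by the triangle inequality is exactly the paper's argument (its $\gamma_3$ is your $[a,e]$), followed by the same dyadic-induction-plus-continuity bootstrap. The only difference is organizational: you factor the diagonal construction into a standalone midpoint-convexity lemma applied to the restrictions $\gamma_i|_{[s,s']}$, whereas the paper re-runs that construction explicitly at each dyadic level.
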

\begin{proof}
\((1)\implies (2)\):
We will prove the statement for all dyadic rationals in \([0,1]\) by induction.
By continuity of \(\gamma_1\) and \(\gamma_2\), we will then have proved it for all real numbers in \([0,1]\).

\textbf{Induction start:}
The first dyadic number for which we need to show the stament is \(t=1/2\).
Consider the geodesic \(\gamma_3:[0,1]\to U_x\) with \(\gamma_3(0)=\gamma_1(0)\) and \(\gamma_3(1)=\gamma_2(1)\).
By the triangle inequality we have \(d(\gamma_1(1/2),\gamma_2(1/2))\leq d(\gamma_1(1/2),\gamma_3(1/2))+d(\gamma_3(1/2),\gamma_2(1/2))\).
Condition (1) gives applied to the two triangles \(\Delta(\gamma_1(0),\gamma_1(1),\gamma_2(1))\) and \(\Delta(\gamma_1(0),\gamma_2(1),\gamma_2(0))\) gives us \(d(\gamma_1(1/2),\gamma_3(1/2))\leq \frac{1}{2}d(\gamma_1(1),\gamma_2(1))\) and \(d(\gamma_3(1/2),\gamma_2(1/2))\leq \frac{1}{2}d(\gamma_1(0),\gamma_2(0))\).
Combining finally gives us
\begin{align*}
d(\gamma_1(1/2),\gamma_2(1/2))&\leq d(\gamma_1(1/2),\gamma_3(1/2))+d(\gamma_3(1/2),\gamma_2(1/2)\\
&\leq \left(1-\frac{1}{2}\right)d(\gamma_1(0),\gamma_2(0))+\frac{1}{2}d(\gamma_1(1),\gamma_2(1)),\end{align*}
showing that (1) implies (2) for \(t=1/2\).

\textbf{Induction step:}
Assume (1), and assume that (2) holds for all dyadic rationals of the form \(\frac{k}{2^n}\), where \(k\in\{0,1,\ldots,2^n\}\).
We then need to show that these imply (2) for any \(t\in\left\{\frac{\ell}{2^{n+1}}:\ell\in\{0,1,\ldots,2^{n+1}\}\right\}\).

Let \(\ell\) be odd, whereby \(\ell-1\) and \(\ell+1\) are even.
In particular, \(\frac{\ell-1}{2^{n+1}}\) and \(\frac{\ell+1}{2^{n+1}}\) are of the form \(\frac{k}{2^n}\).
Now \(\frac{\ell}{2^{n+1}}\) is exactly halfway between the dyadic rational numbers \(\frac{\ell-1}{2^{n+1}}\) and \(\frac{\ell+1}{2^{n+1}}\) for which (2) holds by the induction hypothesis.
Consider now the geodesic \(\gamma_3:[0,1]\to U_x\) from \(\gamma_1\left(\frac{\ell-1}{2^{n+1}}\right)\) to \(\gamma_2\left(\frac{\ell+1}{2^{n+1}}\right)\).
The triangle inequality gives us
\[d\left(\gamma_1\left(\frac{\ell}{2^{n+1}}\right),\gamma_2\left(\frac{\ell}{2^{n+1}}\right)\right)
\leq
d\left(\gamma_1\left(\frac{\ell}{2^{n+1}}\right),\gamma_3\left(\frac{1}{2}\right)\right)
+
d\left(\gamma_3\left(\frac{1}{2}\right),\gamma_2\left(\frac{\ell}{2^{n+1}}\right)\right),\]
while (1) gives us
\begin{align*}
d\left(\gamma_1\left(\frac{\ell}{2^{n+1}}\right),\gamma_3\left(\frac{1}{2}\right)\right)
&\leq
\frac{1}{2}d\left(\gamma_1\left(\frac{\ell+1}{2^{n+1}}\right),\gamma_2\left(\frac{\ell+1}{2^{n+1}}\right)\right)\\
d\left(\gamma_3\left(\frac{1}{2}\right),\gamma_2\left(\frac{\ell}{2^{n-1}}\right)\right)
&\leq
\frac{1}{2}d\left(\gamma_1\left(\frac{\ell-1}{2^{n+1}}\right),\gamma_2\left(\frac{\ell-1}{2^{n+1}}\right)\right)
\end{align*}
Combining finally gets us (2), finishing the induction step.

\((2)\implies (1)\):
Let \(\gamma_1\) be the geodesic from \(a=\gamma_1(0)\) to \(b=\gamma_1(1)\), and let \(\gamma_2\) be the geodesic from \(a=\gamma_2(0)\) to \(c=\gamma_2(1)\).
The midpoints are then \(m=\gamma_1(1/2),m'=\gamma_2(1/2)\).
(2) then tells us
\[d(m,m')\leq\left(1-\frac{1}{2}\right)d(\gamma_1(0),\gamma_2(0))+\frac{1}{2}d(\gamma_1(1),\gamma_2(1))=\frac{1}{2}d(a,a)+\frac{1}{2}d(b,c)=\frac{1}{2}d(b,c),\]
which is what was to be shown.
\end{proof}

The second of the equivalent statements in Proposition \ref{prop:conv_mid} is the definition of \(d\) being a convex function when restricted to \(U_x\).
This justifies saying that the induced metric on \(U_x\) from \((X,d)\) is convex if one of the two statements holds.

\begin{definition}
\label{def::loc_conv}
    Let $(X,d)$ be a metric space. 
    If every \(x\in X\) has a geodesically convex neighborhood where the induced metric is convex, we call \((X,d)\) \textit{locally convex}.
\end{definition}

Comparing all definitions we get that \((X,d)\) is non-positively curved in the sense of Busemann if and only if \((X,d)\) is locally convex.

\chapter{CAT($0$) spaces}
\section{The CAT($0$)-inequality}
\begin{definition}
Let \((X,d)\) be a metric space, let \(\Delta=\Delta(x,y,z)\) a geodesic triangle in \(X\), and let \(\overline{\Delta}\subseteq\E^2\) its corresponding comparison triangle.
\(\Delta\) is said to satisfy the \emph{CAT($0$)-inequality} if we for all \(p,q\in\Delta\) with comparison points \(\overline{p},\overline{q}\in\overline{\Delta}\) have \(d(p,q)\leq d(\overline{p},\overline{q})\).
A \textit{CAT($0$)-space} is a geodesic metric space \((X,d)\) such that every geodesic triangle in \(X\) satisfies the CAT($0$)-inequality.
\end{definition}

If a CAT($0$)-space is furthermore complete we may call it a \textit{Hadamard-space}.
\begin{figure}[h]
    \centering
    \includegraphics[width=2\textwidth/3]{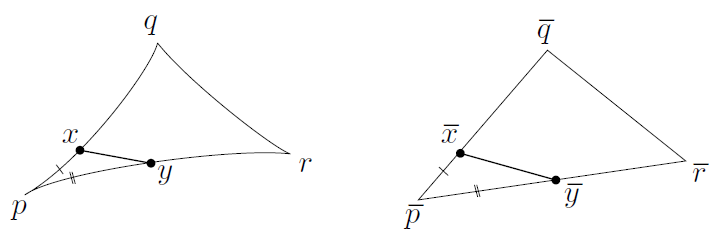}
    \caption{\cite{BridHaef} The CAT($0$)-inequality. The distance between the points \(x\) and \(y\) is no greater than the distance between their comparison points \(\overline{x}\) and \(\overline{y}\)}
    \label{fig:ineq}
\end{figure}

\begin{definition}\label{def:loc_cat}
A metric space \((X,d)\) is said to be \textit{non-positively curved} or \textit{locally} CAT($0$) if for every \(x\in X\) there exists an \(r_x>0\) such that \(B(x,r_x)\) is a CAT($0$)-space.
\end{definition}
\begin{remark}
As the CAT($0$) inequality holds for all points, it must in particular hold for midpoints.
All CAT($0$) spaces are therefore negatively curved in the sense of Busemann as defined in Definition \ref{def:neg_buse}.
\end{remark}

The ``$0$'' in ``CAT($0$)'' is due to the curvature being \(\leq 0\) everywhere.
If instead of using \(\E^2\) we used other \textit{model spaces}, such as hyperbolic space, it would be possible to define spaces with curvature \(\leq \kappa\), fittingly called CAT(\(\kappa\))-spaces.
We will however restrict ourselves to CAT($0$) spaces.

\begin{proposition}\label{prop:apmid_close}
Let \((X,d)\) be a CAT($0$)-space.
\parenum
\begin{enumerate}
    \item \(X\) is uniquely geodesic;
    \item Every local geodesic in \(X\) is a geodesic;
    \item \(X\) is contractible;
    \item Approximate midpoint are close to midpoints in the following sense:
    For every \(\varepsilon>0\) and \(0<L<\infty\) there exists some \(\delta=\delta(\varepsilon,L)\) with the following property: \(d(m,m')<\varepsilon\) for every \(\delta\)-midpoint \(m'\) between \(x\) and \(y\), where \(m\) is the midpoint between \(x\) and \(y\), and \(d(x,y)\leq L\).
\end{enumerate}
\end{proposition}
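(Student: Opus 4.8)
The plan is to prove the four claims in sequence, establishing unique geodesics first since the later parts depend on it. The recurring tool throughout will be the Euclidean median (Apollonius) identity: in a comparison triangle with apex $\overline{p}$ and base $[\overline{x},\overline{y}]$, the midpoint $\overline{m}$ of the base satisfies $d(\overline{p},\overline{m})^2 = \tfrac12 d(\overline{p},\overline{x})^2 + \tfrac12 d(\overline{p},\overline{y})^2 - \tfrac14 d(\overline{x},\overline{y})^2$. Combined with the CAT($0$)-inequality, which lets me replace distances in $X$ by the (larger) Euclidean comparison distances, this identity drives parts (2) and (4), whereas parts (1) and (3) rest on \emph{degenerate} comparison triangles.

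For (1), suppose $\gamma_1,\gamma_2:[0,1]\to X$ are two geodesics from $x$ to $y$, parametrized proportionally to arc length, and fix $t$. I would form the geodesic triangle with vertices $x,y,\gamma_1(t)$, taking $\gamma_2$ as the side $[x,y]$ and the two halves of $\gamma_1$ as the other sides. Because $\gamma_1(t)$ lies on a geodesic from $x$ to $y$, the side lengths satisfy $d(x,\gamma_1(t)) + d(\gamma_1(t),y) = d(x,y)$, so the comparison triangle is degenerate and $\overline{\gamma_1(t)}$ sits on $[\overline{x},\overline{y}]$ at distance $t\,d(x,y)$ from $\overline{x}$. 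The comparison point of $\gamma_2(t)$ lies at the same spot, so the CAT($0$)-inequality forces $d(\gamma_1(t),\gamma_2(t)) = 0$, hence $\gamma_1 = \gamma_2$.

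Part (2) is the step I expect to be the main obstacle. Let $c$ be a local geodesic parametrized by arc length and set $D = \{t : c|_{[0,t]} \text{ is a geodesic}\}$; it is nonempty, down-closed (restrictions of geodesics are geodesics), and, by continuity of $d$ and of $L$ (Proposition \ref{prop:cont_res}), closed, so $[0,t_0]\subseteq D$ where $t_0 = \sup D$. The work is in showing $t_0 = L$. If $t_0 < L$, the local-geodesic property gives $\varepsilon>0$ on which $c$ is genuinely geodesic near $t_0$; by uniqueness (1), $c(t_0)$ is then the exact midpoint of $[c(t_0-\eta), c(t_0+\eta)]$ for small $\eta$. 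Applying the CAT($0$)-inequality to $\Delta(c(0), c(t_0-\eta), c(t_0+\eta))$ and the median identity to its comparison triangle yields $d(c(0),c(t_0))^2 \le \tfrac12 d(c(0),c(t_0-\eta))^2 + \tfrac12\ell^2 - \eta^2$ with $\ell = d(c(0),c(t_0+\eta))$; substituting $d(c(0),c(t_0)) = t_0$ and $d(c(0),c(t_0-\eta)) = t_0-\eta$ and simplifying gives $(t_0+\eta)^2 \le \ell^2$, i.e.\ $\ell \ge t_0+\eta$. Since the triangle inequality gives $\ell \le t_0+\eta$, equality holds and $c|_{[0,t_0+\eta]}$ is a geodesic, contradicting the choice of $t_0$. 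Hence $D = [0,L]$.

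For (3), fix a basepoint $x_0$ and use uniqueness to define $H(x,s) = \gamma_x(1-s)$, where $\gamma_x$ is the unique geodesic from $x_0$ to $x$; then $H(\cdot,0) = \mathrm{id}$ and $H(\cdot,1) \equiv x_0$, so only continuity remains. I would split $d(H(x',s'),H(x,s))$ by the triangle inequality into a term $d(\gamma_x(1-s'),\gamma_x(1-s))$ handled by continuity of the single geodesic $\gamma_x$, and a term $d(\gamma_{x'}(r),\gamma_x(r))$; applying the CAT($0$)-inequality to $\Delta(x_0,x,x')$ together with the fact that in $\E^2$ the two radial points at parameter $r$ differ by exactly $r\,d(x,x')$ gives $d(\gamma_{x'}(r),\gamma_x(r)) \le d(x,x')$, which settles continuity. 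Finally, for (4) I would apply the CAT($0$)-inequality to $\Delta(x,y,m')$, where $m'$ is a $\delta$-midpoint and the comparison point $\overline{m}$ of the true midpoint is the midpoint of $[\overline{x},\overline{y}]$, to get $d(m,m') \le d(\overline{m},\overline{m'})$, and then feed the bounds $d(x,m'),d(y,m') \le \tfrac12 d(x,y)+\delta$ into the median identity. This collapses to $d(m,m')^2 \le d(x,y)\,\delta + \delta^2 \le L\delta + \delta^2$, so choosing $\delta$ with $L\delta+\delta^2 < \varepsilon^2$, a choice depending only on $\varepsilon$ and $L$, completes the proof.
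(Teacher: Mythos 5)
Your proposal is correct. Parts (1), (3) and (4) essentially coincide with the paper's proof: in (1) you use the same degenerate-comparison-triangle trick; in (3) you contract along the unique geodesics to a basepoint, and your continuity argument---the bound \(d(\gamma_x(r),\gamma_{x'}(r))\leq r\,d(x,x')\) extracted from the comparison triangle of \(\Delta(x_0,x,x')\)---is in fact more careful than the paper's, which only gestures at geodesics varying continuously with endpoints; in (4) you reduce to a Euclidean estimate via the comparison triangle of \(\Delta(x,y,m')\) exactly as the paper does, and your median identity yields the same bound \(d(m,m')^2\leq \delta(L+\delta)\) that the paper obtains from Pythagoras, while avoiding its informal claim about which intersection points of the two circles are farthest from \(m\). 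The genuine difference is part (2). The paper proves that \(S=\{t:\sigma|_{[a,t]}\text{ is a geodesic}\}\) is both closed and open, where openness is a qualitative argument: for \(p<t<q\) close to \(t\), the comparison triangle of \(\Delta(\sigma(a),\sigma(t),\sigma(q))\) must be degenerate, since otherwise the strict triangle inequality in \(\E^2\) contradicts the CAT(\(0\)) inequality combined with the local geodesic property; connectedness of \([a,b]\) then finishes. You instead take \(t_0=\sup S\) and argue quantitatively: Apollonius applied to the comparison triangle of \(\Delta(c(0),c(t_0-\eta),c(t_0+\eta))\), together with the CAT(\(0\)) inequality at the midpoint \(c(t_0)\), forces \(d(c(0),c(t_0+\eta))\geq t_0+\eta\), and the triangle inequality gives the reverse. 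Both routes are valid; yours buys a uniform toolkit (one identity drives both (2) and (4)), whereas the paper's degeneracy argument avoids computation entirely. One step you should spell out: from \(d(c(0),c(t_0+\eta))=t_0+\eta\) and unit speed it does not yet formally follow that \(c|_{[0,t_0+\eta]}\) is a geodesic in the paper's sense; you need the standard sandwich argument
\[
t_0+\eta = d(c(0),c(t_0+\eta)) \leq d(c(0),c(s))+d(c(s),c(s'))+d(c(s'),c(t_0+\eta)) \leq s+(s'-s)+(t_0+\eta-s')=t_0+\eta,
\]
which forces \(d(c(s),c(s'))=s'-s\) for all \(0\leq s\leq s'\leq t_0+\eta\). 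This is routine, but it is the analogue of the paper's explicit verification that degeneracy of the comparison triangle makes \(\sigma|_{[a,q]}\) a geodesic.
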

\begin{proof}
\parenum
\begin{enumerate}
    \item Assume for contradiction that there exists two different geodesics \([x,y]\) and \([x,y]'\) in \(X\) connecting the points \(x\) and \(y\).
Choose \(z\in[x,y]\) and \(z'\in[x,y]'\) such that \(d(x,z)=d(x,z')\) and \(z\neq z'\).
Such a pair must exist as \([x,y]\) and \([x,y]'\) are assumed distinct.
Now consider the comparison triangle of the geodesic triangle \([x,z],[z,y],[x,y]'\).
We have \(d(x,z)+d(z,y)=d(x,y)\) so the comparison triangle must be degenerate, whereby the CAT($0$)-inequality gives us \(d(z,z')\leq d(\overline{z},\overline{z}')=0\), contradicting \(z\) and \(z'\) being distinct.
    \item Let \(\sigma:[a,b]\to X\) be a local geodesic, and define \(S=\{t\in [a,b]:\sigma|_{[a,t]}\text{ is a geodesic}\}\).
First we show that \(S\) is closed.
By hypothesis, \(S\) contains its infimum \(a\), as well as a neighborhood of \(a\).
Take \(s:=\sup\{t\in S\}\).
We just need to show \(s\in S\).

Let \(t\leq s\). 
In Proposition \ref{prop:cont_res} we showed that \(L(\sigma|_{[a,t]})\) is continuous as a function of \(t\), so the limit \(\lim_{t\to s}\length{[a,t]}\) exists and equals \(L(\sigma|_{[a,s]})\).
The restriction \(\sigma|_{[a,t]}\) is a geodesic by definition of \(s\), so \(d(\sigma(a),\sigma(t))=L(\sigma|_{[a,t]})=\lambda|t-a|\) for some constant \(\lambda\geq 0\).
Taking the limit as \(t\to s\) we get
\[d(\sigma(a),\sigma(s))=\lim_{t\to s}d(\sigma(a),\sigma(t))=\lim_{t\to s}\length{[a,t]}=L(\sigma|_{[a,s]})=\lambda|t-s|,\]
whereby \(\sigma|_{[a,s]}\) is a geodesic and \(s\) therefore in \(S\), showing that \(S\) is closed.
 
Next we show that \(S\) is open.
Let \(t\in S\).
As \(\sigma\) is a local geodesic, there exists a neighborhood \([t-\varepsilon,t+\varepsilon]\) of \(t\), with \(\varepsilon>0\), such that \(\sigma|_{[t-\varepsilon,t+\varepsilon]}\) is a geodesic.
Let \(t-\varepsilon<p<t<q<t+\varepsilon\), and consider the geodesic triangle \(\Delta([\sigma(a),\sigma(t)],[\sigma(t),\sigma(q)],[\sigma(q),\sigma(a)])\).
We claim that the comparison triangle \(\overline{\Delta}\) is degenerate.
Assume not.
Then
\begin{align*}
d(\overline{\sigma(p)},\overline{\sigma(t)})+d(\overline{\sigma(t)},\overline{\sigma(q)})
    &>d(\overline{\sigma(p)},\overline{\sigma(q)})\tag*{(as non-degenerate)}\\
    &\geq d(\sigma(p),\sigma(q))\tag*{(CAT(0) inequality)}\\
    &=d(\sigma(p),\sigma(t))+d(\sigma(t),\sigma(q))\tag*{(as local geodesic)}\\
    &=d(\overline{\sigma(p)},\overline{\sigma(t)})+d(\overline{\sigma(t)},\overline{\sigma(q)})
\end{align*}
which is a contradiction.
\(\overline{\Delta}\) must therefore be degenerate.
This in turn means \(d(\sigma(a),\sigma(t))+d(\sigma(t),\sigma(q))=d(\sigma(a),\sigma(q))\) whereby \(\sigma|_{[a,q]}\) is a geodesic.
In other words, for every \(t\in S\) there exists a \(q>t\) where \(q\in S\), showing that \(S\) is open.
Now \(S\subseteq [a,b]\) is both open and closed.
The only subsets of \([a,b]\) which are both open and closed are the empty set and the entire set, as \([a,b]\) is connected.
As we have seen above, \(S\) is non-empty as \(\sigma\) is a local geodesic, so \(S=[a,b]\) whereby \(\sigma\) is a geodesic.

    \item Let \(x\in X\) be a point. 
Consider the map \(c:X\times [0,1]\to X\) which maps \((y,t)\) to the point \(z\in X\) on the geodesic segment \([x,y]\subseteq X\), such that \(d(x,z)=td(x,y)\).
Such a point exists as geodesics exist and are unique by part (1), and vary continuously with their endpoints as seen in Proposition \ref{prop:cont_res}.
We now see that \(c\) is a deformation retraction of \(X\) to \(x\), so \(X\) is contractible.

    \item We start by considering only Euclidean space, i.e. \(X=\E^2\).
Let \(\varepsilon>0\), and let \(0<L<\infty\).
Let \(x,y\in X\) be such that \(d(x,y)=L\), and let \(m\) be their midpoint.
We want to find a \(\delta(\varepsilon,L)>0\) such that the proposition is true.
Draw the line segment \([x,y]\) between \(x\) and \(y\), and draw two circles \(S_x,S_y\) with radius \(L/2+\delta\) centered around \(x\) and \(y\), respectively.
These have radius bigger than half the distance between the two points, so they intersect at points \(m_1\) and \(m_2\).
For \(\delta\) small enough, \(m_1\) and \(m_2\) are the points in \(\overline{B(x,L/2+\delta)}\cap \overline{B(y,L/2+\delta)}\) with maximal distance to \(m\).
These points are exactly the \(\delta\)-midpoints between \(x\) and \(y\).
Now the line segments \([m,m_i]\) for \(i\in\{1,2\}\) are orthogonal to the line segment \([x,y]\).
Applying Pythagoras' theorem we get
\begin{align*}
&d(m,m_i)^2+d(m,y)^2=d(m_i,y)^2\\
&\iff d(m,m_i)^2+(L/2)^2=(L/2+\delta^2)\\
&\iff d(m,m_i)^2=\delta(L+\delta).
\end{align*}
We conclude that we want to find a \(\delta\) such that \(\sqrt{\delta(L+\delta)}<\varepsilon\).
As the left hand side can be arbitrarily small by choosing \(\delta\) small enough, we can find infinitely many such \(\delta\).
In the case where \(d(x,y)<L\), then a similar calculation gives us 
\[d(m,m_i')=\sqrt{\delta(d(x,y)+\delta)}<\sqrt{\delta(L+\delta)}<\varepsilon.\]

Now we consider general CAT($0$) spaces.
We again let \(x,y\in X\) be points such that \(d(x,y)\leq L\), and let \(m'\) be a \(\delta\)-midpoint, where we use the same \(\delta=\delta(\varepsilon,L)\) as in the Euclidean case.
Consider now the triangle \(\Delta(x,y,m')\) and its corresponding comparison triangle \(\overline{\Delta}(\overline{x},\overline{y},\overline{m'})\).
The CAT($0$) inequality then finally gives us
\[d(m,m')\leq d(\overline{m},\overline{m'})<\varepsilon,\]
finishing the proof.
\end{enumerate}
\end{proof}

\section{Equivalent definitions of CAT(0)-spaces}
There are multiple other ways of defining CAT(0)-spaces.
\begin{proposition}\label{prop:equiv}
Let \((X,d)\) be a geodesic metric space.
The following are equivalent:
\parenum
\begin{enumerate}
    \item \(X\) is a CAT(0)-space;
    \item For every geodesic triangle \(\Delta=\Delta(x,y,z)\subseteq X\) and every point \(p\in [y,z]\) with comparison point \(\overline{p}\in [\overline{y},\overline{z}]\) we have \(d(x,p)\leq d(\overline{x},\overline{p})\);
    \item For every geodesic triangle \(\Delta=\Delta(x,y,z)\subseteq X\) and every pair of points \(p\in (x,y],q\in (x,z]\), the angles at \(\overline{x}\) in the comparison triangle satisfies \(\angle_{\overline{x}}(\overline{p},\overline{q})\leq\angle_{\overline{x}}(\overline{y},\overline{z})\);
    \item The Alexandrov angle between sides of any geodesic triangle in \(X\) with distinct vertices is less than or equal to the angle between the sides in the comparison triangle in \(\E^2\). 
\end{enumerate}
\end{proposition}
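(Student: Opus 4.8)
The plan is to prove the chain of equivalences $(1) \Rightarrow (2) \Rightarrow (3) \Rightarrow (4) \Rightarrow (1)$, since a cyclic implication is the most economical way to establish that all four conditions are equivalent. Several of the individual links are straightforward, and I would organise the work so that the genuine geometric content is isolated in as few places as possible.

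\medskip

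**First I would handle** $(1) \Rightarrow (2)$, which is essentially immediate: condition $(2)$ is just the CAT($0$)-inequality applied to the special pair of points $q = x$ (a vertex) and $p \in [y,z]$. Since $x$ is its own comparison point, $d(x,p) \le d(\overline{x},\overline{p})$ is a direct instance of $d(p,q) \le d(\overline{p},\overline{q})$. Next, for $(2) \Rightarrow (3)$, I would argue by contraposition using Euclidean plane geometry in the comparison triangle. Given $p \in (x,y]$ and $q \in (x,z]$, the comparison points $\overline{p},\overline{q}$ lie on the segments $[\overline{x},\overline{y}]$, $[\overline{x},\overline{z}]$ at the correct distances from $\overline{x}$. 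The key Euclidean fact is monotonicity: in a triangle with two fixed side lengths $d(\overline{x},\overline{p})$ and $d(\overline{x},\overline{q})$, the length of the opposite side $d(\overline{p},\overline{q})$ is a strictly increasing function of the included angle at $\overline{x}$. I would show that the comparison angle $\overline{\angle}_x(p,q)$ computed from the \emph{sub-triangle} $\Delta(x,p,q)$ equals $\angle_{\overline{x}}(\overline{p},\overline{q})$ precisely when condition $(2)$, applied to the relevant sub-triangles, forces $d(p,q) \le d(\overline{p},\overline{q})$; combined with the law of cosines this yields $\angle_{\overline{x}}(\overline{p},\overline{q}) \le \angle_{\overline{x}}(\overline{y},\overline{z})$.

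\medskip

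**For $(3) \Rightarrow (4)$** I would pass to the limit. By definition the Alexandrov angle is $\angle([x,y],[x,z]) = \limsup_{p,q \to x} \overline{\angle}_x(p,q)$, where $\overline{\angle}_x(p,q)$ is the comparison angle of the sub-triangle $\Delta(x,p,q)$. Condition $(3)$ bounds the comparison angle $\angle_{\overline{x}}(\overline{p},\overline{q})$ of the \emph{full} comparison triangle by the fixed quantity $\angle_{\overline{x}}(\overline{y},\overline{z})$; the subtle point is relating $\overline{\angle}_x(p,q)$ (from the sub-triangle's own side lengths) to $\angle_{\overline{x}}(\overline{p},\overline{q})$ (read off the full comparison triangle). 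The link is that $d(p,q) \le d(\overline{p},\overline{q})$ makes the sub-triangle comparison angle no larger than the full-triangle angle, so taking $\limsup$ gives $\angle([x,y],[x,z]) \le \angle_{\overline{x}}(\overline{y},\overline{z})$, which is exactly $(4)$.

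\medskip

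**The hard part will be** $(4) \Rightarrow (1)$, closing the loop back to the full CAT($0$)-inequality from angle information alone. Here I would fix a geodesic triangle $\Delta(x,y,z)$ and arbitrary points $p,q$ on it, and reduce to the case where $p,q$ lie on two different sides sharing a vertex, say $p \in [x,y]$ and $q \in [x,z]$ (the case of both points on one side being trivial since geodesics are isometric embeddings). The strategy is to compare the angle $\angle_x([x,p],[x,q])$ in $X$ against the Euclidean angle $\angle_{\overline{x}}(\overline{p},\overline{q})$ via $(4)$, and then invoke the law of cosines in $\E^2$: since $d(x,p) = d(\overline{x},\overline{p})$ and $d(x,q) = d(\overline{x},\overline{q})$ by construction of comparison points, a smaller angle at $x$ forces $d(p,q) \le d(\overline{p},\overline{q})$. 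The technical obstacle is that $(4)$ controls the Alexandrov angle between the \emph{sides} $[x,y]$ and $[x,z]$ of the triangle, whereas I need control over the triangle $\Delta(x,p,q)$ with its own comparison triangle; the resolution is to observe that $[x,p] \subseteq [x,y]$ and $[x,q] \subseteq [x,z]$ so the Alexandrov angles agree, and then to compare the sub-triangle $\Delta(x,p,q)$ to $\Delta(x,y,z)$ using the monotonicity already established and Alexandrov's Lemma (Lemma~\ref{lem:alex}) to propagate the inequality from the vertex angle to the full distance $d(p,q)$. This last step, stitching together the angle bound at one vertex with the distance bound for an arbitrary pair of interior points, is where the real care is needed.
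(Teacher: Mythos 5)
Your first three links are, in outline, the paper's own argument: (1)$\Rightarrow$(2) is immediate, and (2)$\Rightarrow$(3) works exactly as you sketch it, by applying (2) twice to sub-triangles and converting distances to angles with the law of cosines. One notational remark that matters: in this paper \(\angle_{\overline{x}}(\overline{p},\overline{q})\) \emph{denotes} the comparison angle \(\overline{\angle}_x(p,q)\), i.e.\ the vertex angle of a comparison triangle for the sub-triangle \(\Delta(x,p,q)\); with that reading (3) is a genuine monotonicity statement and (3)$\Rightarrow$(4) is literally just taking the \(\limsup\), so the ``subtle point'' you raise there dissolves (and the way you propose to resolve it, by invoking \(d(p,q)\le d(\overline{p},\overline{q})\), would be circular, since that is condition (1)). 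The genuine gap is in (4)$\Rightarrow$(1). Hypothesis (4) bounds \emph{Alexandrov} angles from above by comparison angles: applied to \(\Delta(x,y,z)\) it gives \(\angle_x([x,y],[x,z])\le\angle_{\overline{x}}(\overline{y},\overline{z})\), and applied to \(\Delta(x,p,q)\) it gives \(\angle_x([x,p],[x,q])\le\overline{\angle}_x(p,q)\). Since the Alexandrov angle depends only on the germs of the sides at \(x\), these two statements bound the \emph{same} number from above; they give no ordering whatsoever between the two comparison angles \(\overline{\angle}_x(p,q)\) and \(\angle_{\overline{x}}(\overline{y},\overline{z})\) --- and it is precisely the inequality \(\overline{\angle}_x(p,q)\le\angle_{\overline{x}}(\overline{y},\overline{z})\) that the law of cosines needs in order to output \(d(p,q)\le d(\overline{p},\overline{q})\). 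The Alexandrov angle can be strictly below both (at a vertex of a tripod in a metric tree it is \(0\)), so no limiting argument at the vertex can recover the needed comparison-angle inequality. Your proposed repair, ``the monotonicity already established'', is circular: that monotonicity \emph{is} condition (3) (equivalently (2)), which in your cyclic scheme is exactly what you are not allowed to assume while proving (4)$\Rightarrow$(1).

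The missing idea is to apply (4) at an interior point of a side rather than at the vertex. Given \(p\in[y,z]\), the segment \([x,p]\) cuts \(\Delta\) into \(\Delta(x,y,p)\) and \(\Delta(x,p,z)\), and the two Alexandrov angles \(\gamma,\gamma'\) that \([p,x]\) makes with \([p,y]\) and \([p,z]\) satisfy \(\gamma+\gamma'\ge\angle_p([p,y],[p,z])=\pi\) by the triangle inequality for angles (Proposition~\ref{prop:pseudo}). Hypothesis (4), applied to the two sub-triangles, transfers this to their comparison angles: \(\overline{\gamma}+\overline{\gamma}'\ge\pi\). Gluing the two comparison triangles along \([\overline{x},\overline{p}]\) and invoking Alexandrov's Lemma (Lemma~\ref{lem:alex}) converts this angle-sum inequality into the distance inequality \(d(x,p)\le d(\overline{x},\overline{p})\), which is condition (2); the cycle then closes through (2)$\Rightarrow$(3) and the law-of-cosines equivalence of (3) with (1), both of which you already have. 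This is how the paper argues. So your instinct to bring in Alexandrov's Lemma is correct, but it must be deployed at the subdivision point on the side opposite \(x\), where the lower bound \(\pi\) on the angle sum is available; at the vertex \(x\) itself, hypothesis (4) pushes in the wrong direction.
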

\begin{proof}
(2) is a special case of the CAT(0)-inequality, so it follows trivially from (1).
(4) also follows immediately from (3) as the Alexandrov angle is no greater than any comparison angle.

\textbf{(1) is equivalent to (3):}
Let \(x,y,z,p,q\) be as in (3).
Construct the comparison triangles \(\overline{\Delta}=\overline{\Delta}(\overline{x},\overline{y},\overline{z})\) and \(\overline{\Delta}'=\overline{\Delta}(\overline{x}',\overline{p}',\overline{q}')\) with interior angles \(\angle_{\overline{x}}(\overline{y},\overline{z})=\alpha,\angle_{\overline{x}}(\overline{p},\overline{q})=\alpha'\) at \(\overline{x}\) and \(\overline{x}'\), respectively.
By the law of cosines, \(d(\overline{p}',\overline{q}')=d(p,q)\leq d(\overline{p},\overline{q})\) is true if and only if \(\alpha\geq \alpha'\).
The first inequality is the CAT(0)-inequality, and the second is the inequality \(\angle_{\overline{x}}(\overline{p},\overline{q})\leq\angle_{\overline{x}}(\overline{y},\overline{z})\), the same as in (3).
Therefore (1) is equivalent to (3).

\textbf{(2) implies (3):}
Let \(\overline{\Delta}''=\overline{\Delta}(\overline{x}'',\overline{y}'',\overline{q}'')\) be another comparison triangle of \(\Delta(x,y,q)\) with interior angle \(\alpha''\) at \(\overline{x}''\).
By (2) we have \(d(\overline{y}'',\overline{q}'')=d(y,q)\leq d(\overline{y},\overline{q})\).
Similarly as before, the law of cosines now gives us \(\alpha''\leq \alpha\).
Applying (2) to \(\overline{\Delta}''\) instead gives us \(d(\overline{p}',\overline{q}')=d(p,q)\leq d(\overline{p}'',\overline{q}'')\), where \(d(\overline{p}',\overline{q}')\) is the distance between the vertices \(\overline{p}'\) and \(\overline{q}'\) of \(\overline{\Delta}'\), and where \(\overline{p}''\) is the comparison point of \(p\) on the side \([\overline{x}'',\overline{y}'']\).
Using the law of cosines yet again gives \(\alpha'\leq\alpha''\).
Combining inequalities gets us the desired \(\alpha'\leq\alpha\).

\textbf{(3) implies (4):}
Immediate consequence of (3).

\textbf{(4) implies (2):}
Let \(\Delta(x,y,z)\) be a geodesic triangle and let \(p\in[y,z]\).
The geodesic segment from \(x\) to \(p\) forms two geodesic triangles.
Let \(\gamma,\gamma'\) be the Alexandrov angles \([x,p]\) makes at \(p\) with \([y,p]\) and \([p,z]\), respectively, and let \(\overline{\beta}\) be the angle at \(\overline{y}\) in the comparison triangle \(\overline{\Delta}(\overline{x},\overline{y},\overline{z})\).
Draw the two comparison triangles \(\overline{\Delta}(\overline{x'},\overline{y'},\overline{p'})\) of \(\Delta(x,y,p)\) and \(\overline{\Delta}(\overline{x}',\overline{p}',\overline{z}')\) of \(\Delta(x,p,z)\).
These share the side \([\overline{x}',\overline{p}']\), so we can superimpose the two to get a quadrilateral \([\overline{x}',\overline{y}',\overline{p}',\overline{z}']\).
Denote by \(\overline{\beta}'\) the angle at \(\overline{y}'\), and let \(\overline{\gamma},\overline{\gamma}'\) be the angles at \(\overline{p}'\) similarly to \(\gamma,\gamma'\).
By Proposition \ref{prop:pseudo} the Alexandrov angle is a pseudometric, so \(\gamma+\gamma'\geq\angle(y,z)=\pi\). 
By (4) we then have \(\overline{\gamma}+\overline{\gamma'}\geq\pi\), or more suggestively, \(\overline{\gamma}+\overline{\gamma'}-\pi\geq 0\).
Alexandrov's Lemma (\ref{lem:alex}) states that \(\overline{\beta}-\overline{\beta}'\) has the same sign as \(\overline{\gamma}+\overline{\gamma}'-\pi\), which was just shown to be positive.
We therefore have \(\overline{\beta}'\leq\overline{\beta}\), whereby the law of cosines gives us \(d(x,p)=d(\overline{x}',\overline{p}')\leq d(\overline{x},\overline{p})\) as desired.
\end{proof}

\section{The four point condition}
So far we have used triangles for all our considerations.
One can instead reformulate CAT(0) spaces in terms of quadrilaterals, using the so called \emph{CAT(0) 4-point condition}.
\begin{definition}
Let \((X,d)\) be a (pseudo)-metric space and let \((x_1,x_2,y_1,y_2)\) be a four-tuple of points in \(X\).
A \emph{subembedding} in \(\E^2\) of \((x_1,x_2,y_1,y_2)\) is a four-tuple of points \((\overline{x}_1,\overline{x}_2,\overline{y}_1,\overline{y}_2)\) in \(\E^2\) such that
\begin{align}
    d(x_i,y_j)  &=d(\overline{x}_i,\overline{y}_j)\quad\text{for }i,j\in\{1,2\}\label{quad:dif}\\
    d(x_1,x_2)  &\leq d(\overline{x}_1,\overline{x}_2)\label{quad:x}\\
    d(y_1,y_2)  &\leq d(\overline{y}_1,\overline{y}_2)\label{quad:y}.
\end{align}
\((X,d)\) is said to satisfy the \textit{CAT(0) 4-point condition} if every four-tuple of points in \(X\) has a subembedding in \(\E^2\).
\end{definition}
\begin{proposition}
    Let \((X,d)\) be a complete metric space.
    Then the following are equivalent:
    \begin{enumerate}
        \item \(X\) is a CAT(0) space;
        \item \(X\) satisfies the CAT(0) four-point condition and every pair of points \(x,y\in X\) has approximate midpoints.
    \end{enumerate}
\end{proposition}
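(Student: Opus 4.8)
The plan is to prove the two implications separately, using condition (2) of Proposition~\ref{prop:equiv} as the bridge between the purely metric four-point data and the geodesic-triangle formulation of the CAT($0$)-inequality.

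\textbf{$(1)\Rightarrow(2)$.} Since a CAT($0$)-space is by definition geodesic, every pair $x,y$ is joined by a geodesic and hence has a genuine midpoint; a fortiori $X$ admits approximate midpoints, so that half is immediate. For the four-point condition I would take a four-tuple $(x_1,x_2,y_1,y_2)$ and build a planar configuration by placing, on opposite sides of a common segment $[\overline{y}_1,\overline{y}_2]$ of length $d(y_1,y_2)$, the comparison triangles of $\Delta(x_1,y_1,y_2)$ and $\Delta(x_2,y_1,y_2)$. By construction all four cross distances $d(x_i,y_j)$ are reproduced, giving \eqref{quad:dif}, and \eqref{quad:y} holds with equality. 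It remains to check \eqref{quad:x}. Since $\overline{x}_1,\overline{x}_2$ lie on opposite sides of the line through $\overline{y}_1,\overline{y}_2$, the segment $[\overline{x}_1,\overline{x}_2]$ meets that line in a point $\overline{p}$. If $\overline{p}\in[\overline{y}_1,\overline{y}_2]$, let $p\in[y_1,y_2]$ be its comparison point; condition (2) of Proposition~\ref{prop:equiv} applied to both triangles gives $d(x_i,p)\le d(\overline{x}_i,\overline{p})$, whence
\[
d(x_1,x_2)\le d(x_1,p)+d(p,x_2)\le d(\overline{x}_1,\overline{p})+d(\overline{p},\overline{x}_2)=d(\overline{x}_1,\overline{x}_2),
\]
as $\overline{p}\in[\overline{x}_1,\overline{x}_2]$.

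\textbf{The main obstacle.} The difficulty is the complementary, non-convex case, where $\overline{p}$ falls outside $[\overline{y}_1,\overline{y}_2]$ so that the glued quadrilateral has a reflex angle at $\overline{y}_1$ or $\overline{y}_2$ and the naive configuration may fail \eqref{quad:x}. Here I would invoke Alexandrov's Lemma~(\ref{lem:alex}): the reflex angle is exactly the sign condition allowing the two comparison triangles to be ``unfolded'' about the offending vertex into a new four-point configuration with the same cross distances but with $d(\overline{x}_1,\overline{x}_2)$ no smaller, reducing matters to the convex case. Pinning down this folding and verifying it preserves \eqref{quad:dif} is the technical heart of this direction.

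\textbf{$(2)\Rightarrow(1)$.} First I would upgrade approximate midpoints to genuine ones. Fix $x,y$, put $D=d(x,y)$, and let $m_1,m_2$ be $\delta$-midpoints. Applying the four-point condition to $(x,y,m_1,m_2)$ produces planar points with the cross distances preserved and, by \eqref{quad:x}, $d(\overline{x},\overline{y})\ge D$; each $\overline{m}_i$ then lies in $\overline{B}(\overline{x},D/2+\delta)\cap\overline{B}(\overline{y},D/2+\delta)$, a lens whose diameter is at most $2\sqrt{\delta(D+2\delta)}$, so \eqref{quad:y} yields
\[
d(m_1,m_2)\le d(\overline{m}_1,\overline{m}_2)\le 2\sqrt{\delta(D+2\delta)}.
\]
Hence a sequence of $\tfrac1n$-midpoints is Cauchy; by completeness it converges to a point $m$ with $d(x,m)=d(y,m)=D/2$, a genuine midpoint, so $X$ admits midpoints and, being complete, is geodesic by Lemma~\ref{lem:midpoints}. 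Finally, to get the CAT($0$)-inequality it suffices by condition (2) of Proposition~\ref{prop:equiv} to show $d(x,p)\le d(\overline{x},\overline{p})$ for $p\in[y,z]$ in a triangle $\Delta(x,y,z)$. I would apply the four-point condition to $(y,z,x,p)$ in the roles $(x_1,x_2,y_1,y_2)$: by \eqref{quad:x} the subembedding has $d(\overline{y},\overline{z})\ge d(y,z)$, while the planar triangle inequality forces $d(\overline{y},\overline{z})\le d(\overline{y},\overline{p})+d(\overline{p},\overline{z})=d(y,p)+d(p,z)=d(y,z)$. Thus $d(\overline{y},\overline{z})=d(y,z)$ and $\overline{p}\in[\overline{y},\overline{z}]$, so the subembedding is exactly the comparison triangle of $\Delta(x,y,z)$ with $\overline{p}$ the comparison point of $p$, and \eqref{quad:y} reads $d(x,p)\le d(\overline{x},\overline{p})$, as required.
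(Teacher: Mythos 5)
Your proposal follows essentially the same route as the paper: in (1)$\Rightarrow$(2) the same gluing of two comparison triangles with a convex/non-convex dichotomy (you glue along $[\overline{y}_1,\overline{y}_2]$ where the paper glues along $[\overline{x}_1,\overline{x}_2]$; this is only a relabelling), and in (2)$\Rightarrow$(1) the same Cauchy-sequence-of-approximate-midpoints argument followed by a verification of condition (2) of Proposition \ref{prop:equiv} via a subembedding that is forced to degenerate onto a comparison triangle. The parts you carry out are correct; indeed your final step is more careful than the paper's, since you justify $d(\overline{y},\overline{z})=d(y,z)$ and $\overline{p}\in[\overline{y},\overline{z}]$ by equality in the planar triangle inequality, a point the paper leaves implicit.

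The genuine gap is the non-convex case of (1)$\Rightarrow$(2), which you explicitly defer, and whose sketch misplaces the difficulty. An ``unfolding with the same cross distances'' cannot be a rigid rotation of one comparison triangle about the reflex vertex $\overline{y}_1$: any such rotation moves that triangle's copy of $\overline{y}_2$ and so destroys the equality $d(\overline{x}_2,\overline{y}_2)=d(x_2,y_2)$; there is no configuration to which the convex-case argument can be applied. What works (and is what the paper does, with the roles of the $x$'s and $y$'s interchanged) is to build a fresh degenerate configuration: put $\tilde{y}_1$ on a segment $[\tilde{x}_1,\tilde{x}_2]$ of length $d(x_1,y_1)+d(y_1,x_2)$ with $d(\tilde{x}_i,\tilde{y}_1)=d(x_i,y_1)$, and choose $\tilde{y}_2$ with $d(\tilde{x}_i,\tilde{y}_2)=d(x_i,y_2)$. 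Then \eqref{quad:dif} holds by construction, and \eqref{quad:x} is simply the triangle inequality in $X$, namely $d(x_1,x_2)\leq d(x_1,y_1)+d(y_1,x_2)=d(\tilde{x}_1,\tilde{x}_2)$; so the two items your sketch proposes to verify are the trivial ones. What genuinely requires the reflex-angle hypothesis --- and what your sketch never flags --- is, first, that such a planar point $\tilde{y}_2$ exists at all, and second, condition \eqref{quad:y}, i.e.\ $d(\tilde{y}_1,\tilde{y}_2)\geq d(y_1,y_2)$, which is no longer automatic once the two triangles have been deformed. Both statements are exactly the content of Alexandrov's Lemma (Lemma \ref{lem:alex}) applied at $\overline{y}_1$, where the two comparison angles sum to at least $\pi$: straightening the path $\overline{x}_1\,\overline{y}_1\,\overline{x}_2$ into a segment does not decrease the distance from $\overline{y}_1$ to the opposite vertex $\overline{y}_2$. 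Once this is supplied, your argument closes and coincides with the paper's.
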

\begin{proof}
\textbf{(1) implies (2):}
Every CAT(0) space is uniquely geodesic, so midpoints exist.
Let \((x_1,x_2,y_1,y_2)\) be a four-tuple in \(X\).
Consider the two comparison triangles \(\overline{\Delta}(\overline{x}_1,\overline{x}_2,\overline{y}_1)\) and \(\overline{\Delta}(\overline{x}_1,\overline{x}_2,\overline{y}_2)\).
Superimposing these along the common edge \([\overline{x}_1,\overline{x}_2]\) yields a quadrilateral \(Q\).
Consider first the case where \(Q\) is convex.
In this case conditions \eqref{quad:dif} and \eqref{quad:x} are by construction fulfilled, so we just need to prove condition \eqref{quad:y}.
As \(Q\) is convex the lines \([\overline{x}_1,\overline{x}_2]\) and \([\overline{y}_2,\overline{y}_2]\) intersect at some point \(\overline{z}\).
By the triangle inequality, the CAT(0) inequality, and as \(\overline{y}_1,\overline{z},\overline{y}_2\) are colinear, we have \(d(y_1,z)+d(z,y_2)\leq d(\overline{y}_1,\overline{z})+d(\overline{z},\overline{y}_2)=d(\overline{y}_1,\overline{y}_2)\).

In the case where \(Q\) is \emph{not} convex, one of the vertices, say \(\overline{x}_2\), is in the interior of the convex hull of \(Q\).
Following Alexandrov's Lemma (\ref{lem:alex}) we create a convex triangle \(\tilde{\Delta}(\tilde{x}_1,\tilde{y}_1,\tilde{y}_2)\) on the edge \(\tilde{x}_2\in [\tilde{y}_1,\tilde{y}_2]\) with \(d(\tilde{x}_i,\tilde{y}_j)=d(x_i,y_j)\).
The law of cosines gives us \(d(\tilde{x}_1,\tilde{x}_2)\geq d(\overline{x}_1,\overline{y}_1)=d(x_1,y_1)\), and the colinearity of \(\tilde{y}_1,\tilde{x}_2\) and \(\tilde{y}_2\) gives us \(d(\tilde{y}_1,\tilde{y}_2)=d(\tilde{y}_1,\tilde{x}_2)+d(\tilde{x}_2,\tilde{y}_2)=d(y_1,x_2)+d(x_2,y_2)\geq d(y_1,y_2)\), showing that \([\tilde{x}_1,\tilde{y}_1,\tilde{x}_2,\tilde{y}_2]\) is a subembedding of \([x_1,y_1,x_2,y_2]\). 
 
\textbf{(2) implies (1):}
First we need to show that geodesics exist.
We do this by showing that midpoints between any two \(x,y\in X\) exist.
As \((X,d)\) is assumed complete we just need to find a Cauchy sequence of approximate midpoints.
Let \(\{m_i\}\in X\) be a sequence of approximate midpoints with \(\max\{d(x,m_i)\}\leq\frac{1}{2}d(x,y)+\frac{1}{i}\), and fix \(\varepsilon>0\).
Let \((\overline{x},\overline{m}_i,\overline{y},\overline{m}_j)\) be a subembedding of \((x,m_i,y,m_j)\).
Then \(d(\overline{m}_i,\overline{m}_j)\geq d(m_i,m_j)\) and \(d(\overline{x},\overline{y})\leq d(\overline{x},\overline{m}_i)+d(\overline{m}_i,\overline{y})=d(x,m_i)+d(m_i,y)\leq d(x,y)+\frac{2}{i}\).
For \(i,j\) sufficiently large we then have \(d(\overline{x},\overline{y})\leq L\) for some constant \(L\).
We know by Proposition \ref{prop:apmid_close} that approximate midpoints are close to midpoints in \(\E^2\), so there exists a \(\delta(L,\varepsilon)\) such that if \(\max\{\frac{1}{i},\frac{1}{j}\}<\delta(\varepsilon,L)\), then \(d(\overline{m}_i,\overline{m})\leq\varepsilon\) and \(d(\overline{m}_j,\overline{m})\leq\varepsilon\), where \(\overline{m}\) is the midpoint of \(\overline{x}\) and \(\overline{y}\) (which exists as \(d(\overline{x},\overline{y})\leq L\)).
Finally, the inequality \(d(\overline{m}_i,\overline{m}_j)\geq d(m_i,m_j)\) shows that \(d(m_i,m_j)<2\varepsilon\) for sufficiently large \(i,j\), whereby \(\{m_i\}\) is Cauchy.
It is therefore convergent, converging to a midpoint of \(x,y\).
\(X\) is assumed complete, so this midpoint is a point in \(X\), which proves that \(X\) admits midpoints.
Using Lemma \ref{lem:midpoints}, this proves the existence of geodesics.

To show the CAT(0) inequality, let \(\Delta=\Delta(x,y,z)\) be a geodesic triangle with a point \(m\in [x,y]\).
Let \((\overline{x},\overline{m},\overline{y},\overline{z})\) be a subembedding in \(\E^2\) of \((x,m,y,z)\).
The equalities in the definition of subembeddings ensure that \(\overline{\Delta}(\overline{x},\overline{y},\overline{z})\) is a comparison triangle of \(\Delta\), where \(\overline{m}\) is the comparison point for \(m\).
Proposition \ref{prop:equiv} (2) and the definition of subembeddings now show that \((X,d)\) is a CAT(0) space.
\end{proof}

\section{Convexity of the metric}
One property CAT(0) spaces enjoy is that their metrics are convex functions:
\begin{definition}
A function \(f:X\to\R\) defined on a geodesic metric space \((X,d)\) is \textit{convex} if each geodesic path \(\gamma:[0,1]\to X\) parametrized by arc length satisfies
\[f(\gamma(s))\leq (1-s)f(\gamma(0))+sf(\gamma(1)),\]
for all \(s\in [0,1]\).
\end{definition}
\begin{proposition}\label{prop:conv_met}
If \((X,d)\) is a CAT(0) space, then the metric \(d:X\times X\to\R\) is convex.
\end{proposition}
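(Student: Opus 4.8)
The plan is to first pin down what the statement means. Since $d$ takes two arguments, I read the claim via the product: equip $X\times X$ with the $\ell^2$ product metric $d_2\big((a_1,a_2),(b_1,b_2)\big)=\sqrt{d(a_1,b_1)^2+d(a_2,b_2)^2}$, which makes it a geodesic space whose geodesics are exactly the paths $t\mapsto(\gamma_1(t),\gamma_2(t))$ with $\gamma_1,\gamma_2$ geodesics in $X$ parametrized proportionally to arc length. Convexity of $d$ in the sense of the preceding definition then unwinds to the assertion that, for any two such geodesics $\gamma_1,\gamma_2:[0,1]\to X$,
\[
d(\gamma_1(t),\gamma_2(t))\leq (1-t)\,d(\gamma_1(0),\gamma_2(0))+t\,d(\gamma_1(1),\gamma_2(1)),
\]
for all $t\in[0,1]$. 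This is precisely statement (2) of Proposition \ref{prop:conv_mid}, which I would now establish globally rather than merely on a geodesically convex neighborhood.

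The engine of the proof is a scaling inequality for a single triangle, which I would isolate first: if $\Delta(p,a,b)$ is a geodesic triangle and $x\in[p,a]$, $y\in[p,b]$ satisfy $d(p,x)=s\,d(p,a)$ and $d(p,y)=s\,d(p,b)$ for a common $s\in[0,1]$, then $d(x,y)\leq s\,d(a,b)$. This follows immediately from the CAT($0$)-inequality: in the comparison triangle $\overline{\Delta}(\overline{x},\overline{y},\overline{z})$ built on $p,a,b$, the comparison points $\overline{x},\overline{y}$ are the images of the vertices over $a,b$ under the Euclidean homothety centred at $\overline{p}$ with ratio $s$, so $d(\overline{x},\overline{y})=s\,d(a,b)$, and the CAT($0$)-inequality gives $d(x,y)\leq d(\overline{x},\overline{y})$.

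Next comes a two-triangle argument. I would introduce the diagonal geodesic $\gamma_3:[0,1]\to X$ from $\gamma_1(0)$ to $\gamma_2(1)$, which exists and is unique since $X$ is uniquely geodesic by Proposition \ref{prop:apmid_close}(1). Applying the scaling inequality to $\Delta(\gamma_1(0),\gamma_1(1),\gamma_2(1))$ at apex $\gamma_1(0)$ with ratio $t$ bounds $d(\gamma_1(t),\gamma_3(t))\leq t\,d(\gamma_1(1),\gamma_2(1))$; applying it to $\Delta(\gamma_2(1),\gamma_1(0),\gamma_2(0))$ at apex $\gamma_2(1)$ with ratio $1-t$ bounds $d(\gamma_3(t),\gamma_2(t))\leq (1-t)\,d(\gamma_1(0),\gamma_2(0))$. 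The triangle inequality $d(\gamma_1(t),\gamma_2(t))\leq d(\gamma_1(t),\gamma_3(t))+d(\gamma_3(t),\gamma_2(t))$ then yields the desired convexity inequality, and hence the proposition.

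The main obstacle is bookkeeping rather than depth: in the second triangle one must verify that both $\gamma_3(t)$ and $\gamma_2(t)$ sit at proportion $1-t$ from the apex $\gamma_2(1)$, so that the scaling inequality genuinely applies with ratio $1-t$. The only conceptual point is the identification of convexity of $d$ with the two-geodesic inequality through the product metric; once the $\ell^2$-product geodesics are described as pairs of geodesics in $X$, the substance reduces entirely to the elementary Euclidean homothety and the CAT($0$)-inequality.
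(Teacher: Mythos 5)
Your proposal is correct and takes essentially the same approach as the paper: your scaling inequality is precisely the paper's opening special case of two geodesics issuing from a common point (proved, as you do, by the Euclidean homothety in the comparison triangle plus the CAT($0$)-inequality), and your subsequent argument --- the diagonal geodesic \(\gamma_3\) from \(\gamma_1(0)\) to \(\gamma_2(1)\), two applications of that special case with ratios \(t\) and \(1-t\), and the triangle inequality --- is exactly the paper's decomposition. The only additions are your explicit product-metric reading of what convexity of \(d:X\times X\to\R\) means and the citation of unique geodesicity for \(\gamma_3\) (only existence is needed), neither of which changes the substance.
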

\begin{proof}
Start out by assuming \(\gamma_1(0)=\gamma_2(0)\), and let \(\overline{\Delta}\subseteq\E^2\) be a comparison triangle for the geodesic triangle \(\Delta(\gamma_1(0),\gamma_1(1),\gamma_2(1))\).
We know from Euclidean geometry that \(d(\overline{\gamma_1(t)},\overline{\gamma_2(t)})=td(\overline{\gamma_1(1)},\overline{\gamma_2(1)})=td(\gamma_1(1),\gamma_2(1))\).
By the CAT(0) inequality we then have \(d(\gamma_1(t),\gamma_2(t))\leq d(\overline{\gamma_1(t)},\overline{\gamma_2(t)})=td(\gamma_1(1),\gamma_2(1))\).
Now for the case where all endpoints are distinct.
Let \(\gamma_3:[0,1]\to X\) be a geodesic such that \(\gamma_3(0)=\gamma_1(0)\) and \(\gamma_3(1)=\gamma_2(1)\).
Applying the special case presented in the beginning of the proof to the geodesics \(\gamma_1\) and \(\gamma_3\), we get \(d(\gamma_1(t),\gamma_3(t))\leq td(\gamma_1(1),\gamma_3(1))\).
Applying it to geodesics \(\gamma_2\) and \(\gamma_3\) (after reversing their orientation), we similarly get \(d(\gamma_2(t),\gamma_3(t))\leq (1-t)d(\gamma_2(0),\gamma_3(0))\).
Combining we get
\[d(\gamma_1(t),\gamma_2(t))\leq d(\gamma_1(t),\gamma_3(t))+d(\gamma_3(t),\gamma_2(t))\leq td(\gamma_1(1),\gamma_3(1))+(1-t)d(\gamma_2(0),\gamma_3(0)),\]
as desired.
\end{proof}

Convexity gives a useful way of identifying flat subspaces in CAT(0) spaces, i.e. subspaces isometric to a subset of Euclidean space. 
Before stating two of the theorems doing this, we will need some preliminaries.
Let \((X,d)\) be a CAT(0) space.
The projection onto a complete geodesically convex subset \(C\subseteq X\) in the sense of Definition \ref{def:geo_con}
is a map \(\pi: X\to C\) mapping \(x\in X\) to the point \(\pi(x)\in X\) such that \(d(x,\pi(x))=d(x,C)=\inf_{y\in C}d(x,y)\).
\begin{lemma}
Let \((X,d)\) be a CAT(0) space with a complete geodesically convex subset \(C\subseteq X\).
Let \(x\in X\).
Then the projection \(\pi(x)\) defined above is unique.
\end{lemma}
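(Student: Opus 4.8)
The plan is to argue by contradiction, assuming two distance-minimizers exist and using the CAT(0) inequality on an isosceles comparison triangle to manufacture a strictly closer point of $C$, contradicting minimality. So suppose $p$ and $p'$ are both points of $C$ realizing the distance $D := d(x,C)$, that is $d(x,p) = d(x,p') = D$, and suppose toward a contradiction that $p \neq p'$. First I would invoke the geodesic convexity of $C$: the geodesic segment $[p,p']$ lies entirely in $C$, so in particular its midpoint $m$ belongs to $C$.

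Next I would pass to a comparison triangle $\overline{\Delta}(\overline{x},\overline{p},\overline{p}')$ in $\E^2$ for the geodesic triangle $\Delta(x,p,p')$. By construction this Euclidean triangle is isosceles, with $d(\overline{x},\overline{p}) = d(\overline{x},\overline{p}') = D$ and base $d(\overline{p},\overline{p}') = d(p,p') > 0$; note the triangle inequality $d(p,p') \le d(p,x)+d(x,p') = 2D$ guarantees it actually exists in $\E^2$. The comparison point $\overline{m}$ of $m$ is precisely the midpoint of the base $[\overline{p},\overline{p}']$, since comparison points preserve distance along the side and a geodesic in $\E^2$ is a straight segment, so $\overline{m}$ is the point at distance $\tfrac12 d(p,p')$ from $\overline{p}$ along $[\overline{p},\overline{p}']$. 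Because the median from the apex of an isosceles triangle meets the base orthogonally, Pythagoras gives $d(\overline{x},\overline{m}) = \sqrt{D^2 - (\tfrac12 d(p,p'))^2}$, which is strictly less than $D$ as $d(p,p') > 0$.

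Finally I would apply the CAT(0) inequality to the points $x$ and $m$ together with their comparison points $\overline{x}$ and $\overline{m}$, obtaining $d(x,m) \le d(\overline{x},\overline{m}) < D$. But $m \in C$, so this yields $d(x,C) \le d(x,m) < D = d(x,C)$, a contradiction. Hence $p = p'$, and the projection $\pi(x)$ is unique.

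I do not expect a serious obstacle: this is a direct strict-convexity phenomenon packaged through comparison triangles. The one point needing care is the claim that the comparison point $\overline{m}$ of the midpoint $m$ is genuinely the midpoint of the Euclidean base, which I would justify exactly as above via distance-preservation along the side. One could alternatively try to run the argument through Proposition \ref{prop:conv_met} (convexity of the metric), but convexity alone only yields $d(x,\cdot) \le D$ along $[p,p']$ rather than a strict inequality; it is precisely the isosceles comparison-triangle computation that produces the strict gap forcing $p = p'$.
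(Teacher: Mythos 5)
Your proof is correct and follows essentially the same route as the paper: contradiction via two distinct minimizers, geodesic convexity placing the midpoint $m$ of $[p,p']$ in $C$, and the CAT(0) inequality applied to the comparison triangle $\overline{\Delta}(\overline{x},\overline{p},\overline{p}')$ to get $d(x,m)\leq d(\overline{x},\overline{m})<D$, contradicting minimality. The only difference is that you justify the strict Euclidean inequality $d(\overline{x},\overline{m})<D$ explicitly via Pythagoras on the isosceles triangle, a step the paper asserts without comment.
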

\begin{proof}
Assume for contradiction that there are two distinct projections \(\pi(x),\pi(x)'\in C\) of \(x\) onto \(C\).
Note that \(d(x,\pi(x))=d(x,\pi(x)')\), as both are minimal.
Because \(C\) is geodesically convex, the geodesic \([\pi(x),\pi(x)']\) is completely contained within \(C\).
In particular, their midpoint \(m\) is contained in \(C\).
Consider now the comparison triangle \(\overline{\Delta}(\overline{x},\overline{\pi(x)},\overline{\pi(x)'})\).
We have by the CAT(0) inequality
\[d(x,m)\leq d(\overline{x},\overline{m})<d(\overline{x},\overline{\pi(x)}')=d(\overline{x},\overline{\pi(x)})=d(x,\pi(x)).\]
In particular \(d(x,m)<d(x,\pi(x))\), contradicting \(d(x,\pi(x))\) being minimal.
\end{proof}

\begin{proposition}\label{prop:projection_angle}
    Let \(X\) be a CAT(0) space with a complete and geodesically convex subset \(C\).
    Let \(x\notin C\) and \(y\in C\) be points.
    If \(y\neq\pi(x)\), then the Alexandrov angle \(\angle_{\pi(x)}(x,y)\) between \([\pi(x),x]\) and \([\pi(x),y]\) is at least \(\pi/2\).
\end{proposition}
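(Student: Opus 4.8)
The plan is to argue by contradiction: assume the Alexandrov angle $\angle_{\pi(x)}(x,y)$ is strictly less than $\pi/2$, and use this to produce a point on the geodesic $[\pi(x),y]$ that is strictly closer to $x$ than $\pi(x)$ is, contradicting that $\pi(x)$ realizes the distance $d(x,C)$. Since $C$ is geodesically convex, the geodesic $[\pi(x),y]$ lies entirely in $C$, so any such point is a legitimate competitor for the projection, which is what makes the contradiction bite.

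First I would set up the relevant geodesic triangle $\Delta(x,\pi(x),y)$ and pass to its comparison triangle $\overline{\Delta}(\overline{x},\overline{\pi(x)},\overline{y})$ in $\E^2$. By Proposition \ref{prop:equiv}(4), the Alexandrov angle at $\pi(x)$ is bounded above by the Euclidean angle $\overline{\angle}_{\pi(x)}(\overline{x},\overline{y})$ in the comparison triangle; so if the Alexandrov angle were less than $\pi/2$, I would instead work directly with comparison angles, which is cleaner. The cleanest route is: suppose $\overline{\angle}_{\pi(x)}(\overline{x},\overline{y})<\pi/2$. Then in the Euclidean triangle, dropping a perpendicular from $\overline{x}$ to the line through $\overline{\pi(x)}$ and $\overline{y}$, the foot of the perpendicular lies on the ray from $\overline{\pi(x)}$ toward $\overline{y}$ (this is exactly where the acute-angle hypothesis is used). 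Consequently, for small $t>0$, the comparison point $\overline{p}_t$ on $[\overline{\pi(x)},\overline{y}]$ at distance $t$ from $\overline{\pi(x)}$ satisfies $d(\overline{x},\overline{p}_t)<d(\overline{x},\overline{\pi(x)})$, since moving a short distance toward the foot of the perpendicular strictly decreases the Euclidean distance to $\overline{x}$.

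Next I would transport this back to $X$. Let $p_t\in[\pi(x),y]\subseteq C$ be the point corresponding to $\overline{p}_t$. The CAT($0$) inequality gives $d(x,p_t)\leq d(\overline{x},\overline{p}_t)<d(\overline{x},\overline{\pi(x)})=d(x,\pi(x))$. But $p_t\in C$, so $d(x,p_t)\geq d(x,C)=d(x,\pi(x))$, a contradiction. This forces $\overline{\angle}_{\pi(x)}(\overline{x},\overline{y})\geq\pi/2$; to conclude the statement about the \emph{Alexandrov} angle I should instead realize the angle as a limit of comparison angles $\overline{\angle}_{\pi(x)}(x',y')$ along the two geodesics and run the perpendicular-foot argument for each, taking the $\limsup$.

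The main obstacle is the last point: the Alexandrov angle is a $\limsup$ of comparison angles $\overline{\angle}_{\pi(x)}(x_s,y_t)$ as $s,t\to 0$, not the single comparison angle of the big triangle $\Delta(x,\pi(x),y)$. To handle this carefully I would assume for contradiction that $\angle_{\pi(x)}(x,y)<\pi/2$, choose $\eta$ with $\angle_{\pi(x)}(x,y)<\pi/2-\eta$, and use the definition of $\limsup$ to find $\varepsilon>0$ so that $\overline{\angle}_{\pi(x)}(x_s,y_t)<\pi/2-\eta$ for all $0<s,t<\varepsilon$; fixing such a small $t$, the comparison angle of the triangle $\Delta(x_s,\pi(x),y_t)$ is acute, and the Euclidean perpendicular-foot computation above (applied to this small triangle, whose side $[\pi(x),y_t]$ still lies in $C$) yields a point $p\in C$ with $d(x_s,p)<d(x_s,\pi(x))$. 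A short triangle-inequality estimate comparing $x_s$ to $x$ then contradicts minimality of $d(x,\pi(x))$ for $s$ small; this bookkeeping between $x_s$ and $x$ is the only genuinely delicate part, and I expect it to be routine once the Euclidean picture is pinned down.
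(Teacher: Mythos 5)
Your proposal is correct and follows essentially the same route as the paper's proof: both argue by contradiction, use the $\limsup$ definition of the Alexandrov angle to obtain small comparison triangles $\overline{\Delta}(\overline{x_s},\overline{\pi(x)},\overline{y_t})$ with acute angle at $\overline{\pi(x)}$, produce a point $p\in[\pi(x),y_t]\subseteq C$ whose comparison point is strictly closer to $\overline{x_s}$ than $\overline{\pi(x)}$ is (your perpendicular-foot computation is the paper's choice of $p$ making the angle at $\overline{p}$ obtuse), and then transfer back to $X$ via the CAT($0$) inequality and geodesic convexity of $C$ to contradict minimality. The ``bookkeeping'' step you deferred is exactly the paper's preliminary observation that points of $[x,\pi(x)]$ project to $\pi(x)$, and it is indeed routine: $d(x,p)\leq d(x,x_s)+d(x_s,p)<d(x,x_s)+d(x_s,\pi(x))=d(x,\pi(x))$, contradicting $d(x,\pi(x))=d(x,C)$.
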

\begin{proof}
First we show that if \(x'\) belongs to the geodesic segment \([x,\pi(x)]\), then \(\pi(x)=\pi(x')\).
As \([x,\pi(x)]\) is a geodesic we have equality in the triangle inequality \(d(x,\pi(x))=d(x,x')+d(x',\pi(x))\).
If \(d(x',C)<d(x',\pi(x))\), i.e. \(\pi(x')\neq \pi(x)\), then \(d(x',\pi(x'))<d(x',\pi(x))\).
The triangle inequality finally gets us
\begin{align*}
d(x,\pi(x'))&\leq d(x,x')+d(x',\pi(x'))\\
            &< d(x,x')+d(x',\pi(x))\\
            &= d(x,\pi(x)),
\end{align*}
implying that \(d(x,\pi(x))>d(x,C)\), a contradiction.
We must therefore have \(\pi(x)=\pi(x')\).

Assume now for contradiction that the Alexandrov angle \(\angle_{\pi(x)}(x,y)<\pi/2\), where \(x\) and \(y\) are as in the proposition.
Then by definition there must be points \(x'\) and \(y'\) on the geodesic segments \([x,\pi(x)]\subseteq X\) and \([y,\pi(x)]\subseteq C\) such that the comparison angle \(\overline{\angle}_{\overline{\pi(x)}}(\overline{x'},\overline{y'})<\pi/2\).
Now choose \(p\in[y',\pi(x)]\) different from \(y'\) and \(\pi(x)\) close enough to \(\pi(x)\) such that the comparison angle \(\overline{\angle}_{\overline{p}}(\overline{x'},\overline{\pi(x)})\) is obtuse.
We now have
\begin{align*}
d(x',\pi(x'))&\leq d(x',p)\tag{as projection}\\
                &\leq d(\overline{x'},\overline{p})\tag{CAT(0)}\\
                &<d(\overline{x'},\overline{\pi(x)})\tag{as obtuse}\\
                &=d(x',\pi(x)),
\end{align*}
where the last equality follows from the fact that \([\overline{x'},\overline{\pi(x)}]\) is an edge in the comparison triangle.
But earlier we showed that \(\pi(x')=\pi(x)\) so \(d(x',\pi(x'))<d(x',\pi(x'))\), a contradiction.
The assumption \(\angle_{\pi(x)}(x,y)<\pi/2\) is therefore wrong, so \(\angle_{\pi(x)}(x,y)\geq \pi/2\) as desired.
\end{proof}

\subsection{Flat polygons}
We define the \textit{convex hull} of a subset \(A\) of a geodesic space \(X\) to be the intersection of all closed geodesically convex subsets of \(X\) containing \(A\), i.e. as the ``smallest'' convex subset containing \(A\).
The following proposition states that if an angle in a geodesic triangle is equal to its corresponding comparison angle, then its convex hull is flat.
\begin{lemma}[Flat Triangle Lemma]\label{lem:flat_tri}
Let \(\Delta\) be a geodesic triangle in a CAT(0) space \(X\).
If one of the vertex angles of \(\Delta\) is equal to the corresponding comparison angle in a comparison triangle \(\overline{\Delta}\subseteq\E^2\) of \(\Delta\), then the convex hull of \(\Delta\) in \(X\) is isometric to the convex hull of \(\overline{\Delta}\) in \(\E^2\).
\end{lemma}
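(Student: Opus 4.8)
The plan is to fix notation so that the vertex carrying the equality is $x$: write $\Delta = \Delta(x,y,z)$, let $\overline{\Delta} = \overline{\Delta}(\overline{x},\overline{y},\overline{z})$ be a comparison triangle, and assume the Alexandrov angle $\angle_x(y,z)$ equals the comparison angle $\overline{\alpha} := \overline{\angle}_x(y,z)$. The whole argument rests on one geometric claim: for every point $p \in [y,z]$ with comparison point $\overline{p} \in [\overline{y},\overline{z}]$ one has $d(x,p) = d(\overline{x},\overline{p})$. Granting this, I will build an explicit isometry from the solid triangle $\overline{\Delta}\subseteq\E^2$ onto the convex hull of $\Delta$ by filling $\Delta$ with the geodesics $[x,p]$.

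For the key claim the upper bound $d(x,p)\leq d(\overline{x},\overline{p})$ is the CAT($0$) inequality in the form of Proposition \ref{prop:equiv}(2). For the reverse I would split $\Delta$ along $[x,p]$ into $\Delta_1 = \Delta(x,y,p)$ and $\Delta_2 = \Delta(x,p,z)$, take comparison triangles for each, and track the comparison angles: let $\overline{\alpha}_1,\overline{\alpha}_2$ be the two angles at $x$ and $\overline{\gamma}_1,\overline{\gamma}_2$ the two angles at $p$. Since $p$ is interior to the geodesic $[y,z]$ the Alexandrov angle $\angle_p(y,z)$ equals $\pi$, so the triangle inequality for the angle pseudometric (Proposition \ref{prop:pseudo}) together with Proposition \ref{prop:equiv}(4) (Alexandrov angle $\leq$ comparison angle) gives $\overline{\gamma}_1 + \overline{\gamma}_2 \geq \pi$. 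Applying the pseudometric inequality at $x$ and invoking the hypothesis $\angle_x(y,z)=\overline{\alpha}$ gives the matching lower bound $\overline{\alpha} \leq \overline{\alpha}_1 + \overline{\alpha}_2$.

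Next I would glue the two planar comparison triangles along their common edge of length $d(x,p)$, with $\overline{y}$ and $\overline{z}$ on opposite sides, producing a planar figure with apex angle $\overline{\alpha}_1 + \overline{\alpha}_2$ at $\overline{x}$. Summing the angles of $\Delta_1$ and $\Delta_2$ and using $\overline{\gamma}_1+\overline{\gamma}_2\geq\pi$ forces $\overline{\alpha}_1+\overline{\alpha}_2\leq\pi$, so the Euclidean law of cosines applies with the apex angle in $[0,\pi]$; since the base of the glued figure has length at most $d(y,p)+d(p,z)=d(y,z)$ by the triangle inequality, the apex angle cannot exceed that of the genuine comparison triangle, i.e. $\overline{\alpha}_1+\overline{\alpha}_2\leq\overline{\alpha}$. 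Combined with the previous paragraph this yields $\overline{\alpha}_1+\overline{\alpha}_2=\overline{\alpha}$, and equality in the law of cosines forces the base to have length exactly $d(y,z)$, hence $\overline{y},\overline{p},\overline{z}$ to be collinear; the glued figure is then congruent to $\overline{\Delta}$ and $d(x,p)=d(\overline{x},\overline{p})$. This gluing step is precisely the content of Alexandrov's Lemma \ref{lem:alex}, the equality case being what we extract. A useful byproduct is that every intermediate inequality becomes an equality, so each sub-triangle $\Delta(x,y,p)$ inherits the same ``angle at $x$ equals comparison angle'' property, and flatness propagates.

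Finally, to promote the claim to an isometry of convex hulls, I would define $f\colon \overline{\Delta}\to X$ radially from $\overline{x}$: a point $\overline{w}\in\overline{\Delta}$ lies on a unique segment $[\overline{x},\overline{p}]$ with $\overline{p}\in[\overline{y},\overline{z}]$, and I send it to the point of the geodesic $[x,p]$ at the same distance from $x$. The claim makes $f$ well defined and radially isometric, and the propagation of flatness lets me show that for $\overline{a}\in[\overline{x},\overline{p}]$ and $\overline{b}\in[\overline{x},\overline{q}]$ the Alexandrov angle $\angle_x(a,b)$ equals the Euclidean angle $\angle\,\overline{a}\,\overline{x}\,\overline{b}$; the law of cosines, with the upper bound supplied by CAT($0$) and the lower bound by this angle equality via Proposition \ref{prop:equiv}(4), then gives $d(a,b)=d(\overline{a},\overline{b})$, so $f$ is an isometry onto its image. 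That image $\bigcup_p [x,p]$ contains $\Delta$, is geodesically convex because geodesics in a CAT($0$) space are unique (Proposition \ref{prop:apmid_close}(1)) and hence $f$ carries the straight segments of the convex set $\overline{\Delta}$ to the genuine geodesics joining their images, and is contained in every closed convex set containing $\Delta$; it is therefore exactly the convex hull of $\Delta$, which completes the proof. I expect the main obstacle to be the key claim — specifically pinning down the sandwich $\overline{\alpha}\leq\overline{\alpha}_1+\overline{\alpha}_2\leq\overline{\alpha}$ and its equality case — while the distance bookkeeping in the final isometry verification should be routine but tedious.
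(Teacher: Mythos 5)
Your proposal is correct and follows essentially the same route as the paper's own proof: the key radial-distance equality \(d(x,p)=d(\overline{x},\overline{p})\) is obtained, exactly as in the paper, by splitting \(\Delta\) along \([x,p]\), squeezing with the angle pseudometric (Proposition \ref{prop:pseudo}), the angle/comparison-angle inequality (Proposition \ref{prop:equiv}), and the planar gluing plus law-of-cosines argument, after which the same radial map is shown to be an isometry by forcing equality between Alexandrov and Euclidean angles for pairs of radial segments. The only organizational difference is that you propagate the flatness hypothesis to sub-triangles and apply it twice, where the paper squeezes the three-term angle decomposition \(\delta_1+\delta_2+\delta_3\) at the apex in a single step; your treatment of why the image is precisely the convex hull is a welcome bit of extra care, but the mathematics is the same.
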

\begin{proof}
Let \(\Delta=\Delta(p,q,q')\) be a geodesic triangle in \(X\) with comparison triangle \(\overline{\Delta}=\overline{\Delta}(\overline{p},\overline{q},\overline{q'})\), and assume \(\angle_p(q,q')=\overline{\angle}_p(q,q')\).
Let \(r\in [q,q']\) be a point.
Next let \(\Delta'=\Delta(p,q,r)\) and \(\Delta''(p,q',r)\) be geodesic triangles with comparison triangles \(\twiddle{\Delta}'=\overline{\Delta}(\twiddle{p},\twiddle{q},\twiddle{r})\) and \(\twiddle{\Delta}''=\overline{\Delta}(\twiddle{p},\twiddle{r},\twiddle{q}')\).
Arrange these in \(\E^2\) such that they share the edge \([\twiddle{p},\twiddle{r}]\) with \(\twiddle{q}\) and \(\twiddle{q}'\) lie on opposite sides of the line \([\twiddle{p},\twiddle{r}]\).
We have
\begin{align*}
\angle_p(q,q')  &\leq \angle_p(q,r)+\angle_p(r,q')\tag{Proposition \ref{prop:pseudo}}\\
&\leq \angle_{\twiddle{p}}(\twiddle{q},\twiddle{r})+\angle_{\twiddle{p}} (\twiddle{r},\twiddle{q}')\tag{Proposition \ref{prop:equiv}}\\
&\leq \angle_{\overline{p}}(\overline{q},\overline{q}')
\end{align*}
The last inequality is obtained as follows: The line \([\twiddle{q},\twiddle{q'}]\) has length shorter than \(d(\twiddle{q},\twiddle{r})+d(\twiddle{q'},\twiddle{r})=d(\overline{q},\overline{q'})\) by the triangle inequality.
The law of cosines and the CAT(0) condition on \(X\) then imply the inequality.
As we assumed \(\angle_p(q,q')=\overline{\angle}_p(q,q')\) we must have equality everywhere, so \(\angle_{\twiddle{p}}(\twiddle{q},\twiddle{r})+\angle_{\twiddle{p}} (\twiddle{r},\twiddle{q}')=\angle_{\overline{p}}(\overline{q},\overline{q}')\).
This furthermore implies \(d(p,r)=d(\twiddle{p},\twiddle{r})=d(\overline{p},\overline{r})\).

Next let \(j:C(\overline{\Delta})\to X\) be the map which maps the geodesic segment \([\overline{p},\overline{r}]\) isometrically onto the geodesic segment \([p,r]\), where \(\overline{r}\in [\overline{q},\overline{q'}]\), and where \(C(\overline{\Delta})\) denotes the convex hull of \(\overline{\Delta}\).
Let \(\overline{r}\) and \(\overline{r}'\) be points on \([\overline{q},\overline{q}']\) such that \(\overline{r}\in[\overline{q},\overline{r}']\), and let \(\overline{x}\in [\overline{p},\overline{r}],\overline{x}'\in[\overline{p},\overline{r}']\).
Next let \(x=j(\overline{x}),x'=j(\overline{x}'),r=j(\overline{r})\) and \(r'=j(\overline{r}')\) be points, and let \(\delta_1=\angle_p(q,r),\delta_2=\angle_p(r,r'),\delta_3=\angle_p(r',q)\) be Alexandrov angles.
As shown above \(\Delta(\overline{p},\overline{q},\overline{r})\) is a comparison triangle for \(\Delta(p,q,r)\), so \(\delta_1\leq\overline{\delta}_1\), where \(\overline{\delta}_1\) is the angle in \(\overline{\Delta}\) corresponding to \(\delta_1\).
In the same way we get \(\delta_2\leq\overline{\delta}_2\) and \(\delta_3\leq\overline{\delta}_3\).
As \(\angle\) is a pseudometric we have \(\angle_p(q,q')\leq\delta_1+\delta_2+\delta_3\leq\overline{\delta}_1+\overline{\delta}_2+\overline{\delta}_3=\angle_{\overline{p}}(\overline{q},\overline{q}')\). 
By assumption this equals \(\angle_p(q,q')\), so we have equality everywhere, whereby \(\delta_2=\overline{\delta}_2\).
This finally gets us \(d(x,x')=d(\overline{x},\overline{x}')\), showing that \(j\) is an isometry onto the convex hull \(C(\Delta)\).
\end{proof}

We have now shown that a geodesic triangle having one angle the same as its comparison angle in the comparison triangle implies that it is ``flat''.
The next theorem states something similar, giving us a condition for geodesic quadrilaterals to be flat:
\begin{theorem}[The Flat Quadrilateral Theorem]\label{thm:flat_quad}
    Let \(p,q,r,s\) be points in a CAT(0) space \((X,d)\), and let \(\alpha=\overline{\angle}_p(s,q),\beta=\overline{\angle}_q(p,r),\gamma=\overline{\angle}_r(q,s)\) and \(\delta=\overline{\angle}_s(r,p)\) be comparison angles.
    If \(\alpha+\beta+\gamma+\delta\geq 2\pi\) then \(\alpha+\beta+\gamma+\delta= 2\pi\), and the convex hull of the four points is isometric to a convex quadrilateral in \(\E^2\).    
\end{theorem}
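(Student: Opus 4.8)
The plan is to cut the quadrilateral along the diagonal $[p,r]$, control everything through the two resulting sub-triangles, and reduce the angle hypothesis to a single metric inequality. Concretely, I would form the comparison triangles $\overline{\Delta}_1=\overline{\Delta}(\overline p,\overline q,\overline r)$ of $\Delta(p,q,r)$ and $\overline{\Delta}_2=\overline{\Delta}(\overline p,\overline r,\overline s)$ of $\Delta(p,r,s)$, and glue them in $\E^2$ along the common edge $[\overline p,\overline r]$ with $\overline q,\overline s$ on opposite sides, obtaining a planar quadrilateral $\overline p\,\overline q\,\overline r\,\overline s$. By construction its interior angle at $\overline q$ is exactly $\beta$ and at $\overline s$ is exactly $\delta$, while the angle at $\overline p$ is $\alpha_1+\alpha_2$ and the angle at $\overline r$ is $\gamma_1+\gamma_2$, where $\alpha_1=\overline{\angle}_p(q,r)$, $\alpha_2=\overline{\angle}_p(r,s)$, $\gamma_1=\overline{\angle}_r(p,q)$, $\gamma_2=\overline{\angle}_r(p,s)$. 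Provided this quadrilateral is simple, its interior angles sum to $2\pi$, so $\alpha_1+\alpha_2+\beta+\gamma_1+\gamma_2+\delta=2\pi$.

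The geometric heart is the inequality $d(q,s)\le d(\overline q,\overline s)$. To obtain it, let $\overline m$ be the point where $[\overline q,\overline s]$ meets the diagonal $[\overline p,\overline r]$, and let $m\in[p,r]$ be the point with $\overline m$ as its comparison point in both glued triangles. Applying the CAT($0$) inequality in the form of Proposition \ref{prop:equiv}(2) to $\Delta(p,q,r)$ and to $\Delta(p,r,s)$ gives $d(q,m)\le d(\overline q,\overline m)$ and $d(m,s)\le d(\overline m,\overline s)$, whence $d(q,s)\le d(q,m)+d(m,s)\le d(\overline q,\overline m)+d(\overline m,\overline s)=d(\overline q,\overline s)$. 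Feeding this into the law of cosines (a longer opposite side forces a larger apex angle) yields $\alpha\le\alpha_1+\alpha_2$, and the identical computation based at $\overline r$ yields $\gamma\le\gamma_1+\gamma_2$. Combining with the planar angle sum and the hypothesis gives $2\pi\le\alpha+\beta+\gamma+\delta\le(\alpha_1+\alpha_2)+\beta+(\gamma_1+\gamma_2)+\delta=2\pi$, forcing equality throughout; in particular $\alpha+\beta+\gamma+\delta=2\pi$, which is the first assertion.

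For the rigidity statement I would exploit that equality collapses every inequality above: $d(q,s)=d(q,m)+d(m,s)$ (so $m\in[q,s]$), together with $d(q,m)=d(\overline q,\overline m)$ and $d(m,s)=d(\overline m,\overline s)$. Now split $\Delta(p,q,r)$ along $[q,m]$. Since $\overline m$ lies on $[\overline p,\overline r]$, the comparison triangles of the two pieces $\Delta(p,q,m)$ and $\Delta(q,m,r)$ are precisely the two halves of $\overline{\Delta}_1$, so their comparison angles at $\overline m$ add to $\pi$; as each Alexandrov angle is at most its comparison angle (Proposition \ref{prop:equiv}(4)) while the pseudometric inequality (Proposition \ref{prop:pseudo}) forces $\angle_m(q,p)+\angle_m(q,r)\ge\angle_m(p,r)=\pi$, a squeeze gives $\angle_m(q,p)=\overline{\angle}_m(q,p)$ and $\angle_m(q,r)=\overline{\angle}_m(q,r)$. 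The Flat Triangle Lemma (\ref{lem:flat_tri}) then makes both pieces, hence $\Delta(p,q,r)$, flat; the same argument applied through $d(m,s)=d(\overline m,\overline s)$ makes $\Delta(p,r,s)$ flat. Gluing the two flat triangles along $[p,r]$ identifies the convex hull of $\{p,q,r,s\}$ isometrically with the Euclidean quadrilateral $\overline p\,\overline q\,\overline r\,\overline s$, which is convex because its four angles $\alpha,\beta,\gamma,\delta$ are each at most $\pi$ and sum to $2\pi$.

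The step I expect to be the real obstacle is justifying the standing assumption that the glued quadrilateral is simple and that $\overline m$ actually lands on the segment $[\overline p,\overline r]$ rather than on its extension; when the glued figure is non-convex, one of $\overline q,\overline s$ falls inside the triangle opposite it and the naive angle sum is wrong. I would dispose of this exactly as in the subembedding argument earlier in this section: invoke Alexandrov's Lemma (\ref{lem:alex}) to straighten the offending configuration into a convex one carrying the same side data, after which the interior angle sum is genuinely $2\pi$ and the crossing point lies where the argument needs it. I would also record separately the degenerate situations (some $d(\cdot,\cdot)=0$, or an angle equal to $\pi$, producing a flat triangle in place of a genuine quadrilateral), all of which are consistent with the conclusion once ``convex quadrilateral'' is read to allow degeneracies.
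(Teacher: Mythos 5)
Your convex-case argument is sound, and it genuinely differs from the paper's: the paper cuts along the \emph{other} diagonal $[q,s]$, gluing comparison triangles of $\Delta(p,q,s)$ and $\Delta(r,q,s)$, so that all four needed bounds ($\alpha\le\overline{\alpha}$, $\gamma\le\overline{\gamma}$, $\beta\le\overline{\beta}_1+\overline{\beta}_2$, $\delta\le\overline{\delta}_1+\overline{\delta}_2$) come from Proposition \ref{prop:equiv}(4) together with the pseudometric inequality (Proposition \ref{prop:pseudo}); these hold unconditionally, and the six planar angles sum to $2\pi$ simply because they are the angles of two triangles, so the paper never needs to know the glued figure is convex --- convexity falls out only \emph{after} equality has been forced. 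Your cut along $[p,r]$ has the pleasant feature that $\beta$ and $\delta$ appear on the nose (so you are proving the comparison-angle statement exactly as the paper phrases it), but it concentrates the entire difficulty into the two inequalities $\alpha\le\alpha_1+\alpha_2$ and $\gamma\le\gamma_1+\gamma_2$, which you can only reach through the crossing point $\overline{m}$ --- and that is precisely where convexity of the glued quadrilateral is indispensable.

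The genuine gap is the non-convex case, and the fix you sketch does not work as described. (A minor slip first: with your gluing, the reflex vertex can only be $\overline{p}$ or $\overline{r}$; the angles at $\overline{q}$ and $\overline{s}$ are angles of single Euclidean triangles, hence less than $\pi$.) Straightening at, say, $\overline{r}$ via Alexandrov's Lemma does \emph{not} produce a configuration ``carrying the same side data'' in the sense your argument requires: it preserves the four outer sides $d(p,q),d(q,r),d(r,s),d(s,p)$ but \emph{lengthens the diagonal}, $d(\tilde{p},\tilde{r})\ge d(p,r)$, strictly so in the non-degenerate case. Consequently the two halves of the straightened figure are no longer comparison triangles of $\Delta(p,q,r)$ and $\Delta(p,r,s)$, and Proposition \ref{prop:equiv}(2) can no longer be invoked at the crossing point. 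This is exactly where the analogy with the paper's four-point-condition proof breaks: there the subembedding conditions demand only \emph{inequalities} on the diagonals, so a straightening that lengthens one is harmless, whereas your crossing-point argument needs \emph{equality} on the diagonal. A repair that keeps your decomposition: if $\gamma_1+\gamma_2>\pi$, Alexandrov's Lemma yields a triangle $\tilde{\Delta}(\tilde{p},\tilde{q},\tilde{s})$ with $\tilde{r}\in[\tilde{q},\tilde{s}]$ whose angles at $\tilde{q},\tilde{s}$ dominate $\beta,\delta$, while $d(\tilde{q},\tilde{s})=d(q,r)+d(r,s)\ge d(q,s)$ forces its angle at $\tilde{p}$ to dominate $\alpha$; hence $\alpha+\beta+\delta\le\pi$, and since $\gamma\le\pi$ always, the hypothesis $\alpha+\beta+\gamma+\delta\ge 2\pi$ forces $\gamma=\pi$ together with equality in Alexandrov's Lemma, collapsing this case to a degenerate configuration (which you must then still treat in the rigidity conclusion). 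Finally, note that both gluing steps in your rigidity argument --- the two halves of each triangle along $[q,m]$, and then the two flat triangles along $[p,r]$ --- require the cross-distance verification that occupies the last paragraph of the paper's proof; as written, you assert them without argument.
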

\begin{proof}
Let \(\Delta_1=\Delta(p,q,s),\Delta_2=\Delta(r,q,s)\), and arrange their comparison triangles in \(\E^2\) such that they share the line segment \([\overline{q},\overline{s}]\).
Let \(\overline{\Delta}_1\) have angles \(\overline{\alpha},\overline{\beta}_1,\overline{\delta}_1\) at \(\overline{p},\overline{q}\) and \(\overline{s}\), respectively, and let \(\overline{\Delta}_2\) have angles \(\overline{\gamma},\overline{\beta}_2,\overline{\delta}_2\) at \(\overline{r},\overline{q}\) and \(\overline{s}\), respectively.
Let \(\alpha,\beta_1,\beta_2,\gamma,\delta_1,\delta_2\) be the corresponding Alexandrov angles in the corresponding geodesic triangles in \(X\).
As \(X\) is a CAT(0) space, proposition \ref{prop:equiv} tells us that \(\alpha\leq\overline{\alpha}\) and \(\gamma\leq\overline{\gamma}\).
By the triangle inequality for angles (cf. Proposition \ref{prop:pseudo}) we also have \(\beta\leq\beta_1+\beta_2\leq\overline{\beta}_1+\overline{\beta}_2\) and \(\delta\leq \delta_1+\delta_2\leq\overline{\delta}_1+\overline{\delta}_2\).
Under the assumption \(\alpha+\beta+\gamma+\delta\geq2\pi\) we then have \(\overline{\alpha}+\overline{\beta}_1+\overline{\beta}_2+\overline{\gamma}+\overline{\delta}_1+\overline{\delta}_2\geq 2\pi\).
But this sum is the angle sum of a quadrilateral in \(\E^2\), so we must have equality everywhere.
We must in particular have \(\overline{\beta}=\overline{\beta}_1+\overline{\beta}_2\leq\pi\) and \(\overline{\delta}=\overline{\delta}_1+\overline{\delta}_2\leq\pi\).
This means that the quadrilateral \(Q\) in \(\E^2\) with vertices \(\overline{p},\overline{q},\overline{r}\) and \(\overline{s}\) must be convex.
Let \(j:C(Q)\to X\) be a map such that the restrictions \(j_1:C(\overline{\Delta}_1)\to\Delta_1\) and \(j_2:C(\overline{\Delta}_2)\to\Delta_2\) are isometries (these exist by the Flat Triangle Lemma).
We know that distance between two points \(\overline{x}_1\) and \(\overline{x}_2\) is preserved if both points lie in the same comparison triangle, so all that is left to show is that distance is preserved when one point lies in \(C(\overline{\Delta}_1)\) and the other lies in \(C(\overline{\Delta}_2)\).
Let \(\overline{x}_1\in C(\overline{\Delta_1}),\overline{x}_2\in C(\overline{\Delta_2})\) be two such points, and let \(\overline{\mu}=\angle_{\overline{p}}(\overline{x}_1,\overline{x}_2)\).
Denote the corresponding angle in \(X\) by \(\mu\).
Let \(\beta'_1=\angle_q(p,x_1),\beta''_1=\angle_q(x_1,s),\beta''_2=\angle_q(s,x_2)\) and \(\beta_2'=\angle_q(x_2,r)\).
We now have
\begin{align*}
\beta   &\leq\beta'_1+\mu+\beta_2'\tag{Proposition \ref{prop:pseudo}}\\
        &\leq\beta'_1+\beta''_1+\beta''_2+\beta'_2\tag{Proposition \ref{prop:pseudo}}\\
        &\leq\overline{\beta}'_1+\overline{\beta}''_1+\overline{\beta}''_2+\overline{\beta}'_2\\
        &=\overline{\beta}_1+\overline{\beta}_2\\
        &=\beta,
\end{align*}
giving us equalities everywhere. 
This gives us \(\mu=\overline{\beta}''_1+\overline{\beta}''_2\), whereby \(\mu=\overline{\mu}\).
This together with the Flat Triangle Lemma \ref{lem:flat_tri} finally gives us the desired \(d(x_1,x_2)=d(\overline{x}_1,\overline{x}_2)\).
\end{proof}

\begin{theorem}[The Flat Strip Theorem]\label{thm:flat_strip}
    Let \((X,d)\) be a CAT(0) space with two geodesic lines \(\gamma_1:\R\to X,\gamma_2:\R\to X\).
    If \(\gamma_1\) and \(\gamma_2\) are asymptotic, that is, if there is a constant \(K\) such that \(d(c(t),c'(t))\leq K\) for \(t\in\R\), then the convex hull of \(\gamma_1(\R)\cup\gamma_2(\R)\) in \(X\) is isometric to a flat strip \(\R\times [0,D]\subseteq\E^2\) for some \(0<D\in\R\).
\end{theorem}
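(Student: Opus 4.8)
The plan is to show that the convex hull of the two asymptotic lines is a flat strip by producing, for each pair of corresponding points, a flat quadrilateral, and then gluing these together. The key observation driving everything is that in a CAT(0) space the distance function $t\mapsto d(\gamma_1(t),\gamma_2(t))$ is convex (by \Cref{prop:conv_met}), and a bounded convex function on $\R$ must be constant. This immediately tells us the two lines stay a fixed distance $D$ apart, which is the candidate width of the strip.

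First I would establish that $D:=d(\gamma_1(t),\gamma_2(t))$ is independent of $t$. Fix $s<t$ and consider the four points $\gamma_1(s),\gamma_1(t),\gamma_2(s),\gamma_2(t)$; convexity of the metric applied to the two geodesic lines gives $d(\gamma_1(u),\gamma_2(u))\le \max\{d(\gamma_1(s),\gamma_2(s)),d(\gamma_1(t),\gamma_2(t))\}$ for $u$ between, and the asymptotic bound $K$ prevents this convex function from growing, forcing it to be constant. Having $D$ fixed, I would reparametrize both lines so that $\gamma_1(t)$ and $\gamma_2(t)$ are the corresponding points at distance $D$, and I may assume $\gamma_2(t)=\pi(\gamma_1(t))$ is the projection onto the complete convex line $\gamma_2(\R)$ (after possibly shifting the parametrization, using \Cref{prop:projection_angle}).

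Next I would apply the \textbf{Flat Quadrilateral Theorem} (\Cref{thm:flat_quad}) to the four corners $p=\gamma_1(s),q=\gamma_1(t),r=\gamma_2(t),s=\gamma_2(s)$ for each pair $s<t$. The crucial input is that all four comparison angles sum to at least $2\pi$: by \Cref{prop:projection_angle} the projection angles at $\gamma_2(s)$ and $\gamma_2(t)$ are each at least $\pi/2$, and by symmetry (the lines are mutually asymptotic, so each projects onto the other at distance $D$) the angles at $\gamma_1(s)$ and $\gamma_1(t)$ are likewise at least $\pi/2$. Hence the angle sum is $\ge 2\pi$, and \Cref{thm:flat_quad} yields that the convex hull of these four points is isometric to a convex (indeed rectangular, since all angles equal $\pi/2$) quadrilateral in $\E^2$, namely $[s,t]\times[0,D]$.

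Finally I would assemble these flat rectangles into a single strip. The convex hull of $\gamma_1(\R)\cup\gamma_2(\R)$ is the union (over all $s<t$) of the flat quadrilaterals just produced, and consistency of the isometries on overlaps lets us define a global map $\R\times[0,D]\to X$; uniqueness of geodesics in a CAT(0) space (\Cref{prop:apmid_close}(1)) guarantees these local isometries patch together coherently. The main obstacle I anticipate is \emph{verifying the angle hypothesis at the $\gamma_1$-endpoints}: establishing that $\gamma_1(s)$ projects to $\gamma_2(s)$ (not merely that $\gamma_2(s)$ is the foot from $\gamma_1(s)$) requires the symmetry of the asymptotic relation together with the constancy of $D$, and it is here that the convexity argument and \Cref{prop:projection_angle} must be combined carefully to close the angle sum at exactly $2\pi$.
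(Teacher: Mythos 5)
Your proposal is correct and follows essentially the same route as the paper's proof: constancy of \(t\mapsto d(\gamma_1(t),\gamma_2(t))\) from convexity (Proposition \ref{prop:conv_met}) plus boundedness, mutual projection of the two lines onto each other, Proposition \ref{prop:projection_angle} to make all four angles at least \(\pi/2\), and the Flat Quadrilateral Theorem \ref{thm:flat_quad} to flatten and assemble the strip. The step you flag as the main obstacle---that \(\gamma_1(t)\) projects to \(\gamma_2(t)\) and vice versa---is resolved in the paper exactly by the tools you name: after normalizing \(\pi(\gamma_2(0))=\gamma_1(0)\), the shifted function \(t\mapsto d(\gamma_1(t+a),\gamma_2(t))\) is likewise convex and bounded, hence constant, and its constant value \(d(\gamma_1(a),\gamma_2(0))\geq d(\gamma_1(0),\gamma_2(0))\) is minimized precisely at \(a=0\), which gives the mutual projection property.
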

\begin{proof}
Let \(\pi\) be the projection of \(X\) onto the complete convex subset \(\gamma_1(\R)\).
Reparametrizing allows us to assume that \(\pi(\gamma_2(0))=\gamma_1(0)\), whereby \(\gamma_1(0)\) is the point on \(\gamma_1(\R)\) closest to \(\gamma_2(0)\).
By Proposition \ref{prop:conv_met} we know that \(t\mapsto d(\gamma_1(t),\gamma_2(t))\) is a function that is positive and convex.
It is also bounded as \(\gamma_1\) and \(\gamma_2\) are asymptotic.
These three conditions togther imply that the function \(t\mapsto d(\gamma_1(t),\gamma_2(t))\) constant, equal to some constant \(D\), as bounded convex functions are constant.
\(t\mapsto d(\gamma_1(t+a),\gamma_2(t))\) is also constant, so \(d(\gamma_1(t+a),\gamma_2(t))=d(\gamma_1(a),\gamma_2(0))\geq d(\gamma_1(0),\gamma_2(0))\).
This implies that \(d(\gamma_1(t+a),\gamma_2(t))\) is smallest when \(a=0\), so \(\pi(\gamma_2(t))=\gamma_1(t)\).
Similarly the projection \(\pi':X\to \gamma_2(\R)\) satisfies \(\pi'(\gamma_1(t))=\gamma_2(t)\).

We now let \(t<t'\) be real numbers, and consider the quadrilateral \([\gamma_1(t),\gamma_1(t'),\gamma_2(t'),\gamma_1(t)]\).
By Proposition \ref{prop:projection_angle}, all angles are at least \(\pi/2\), so their sum is no less than \(2\pi\).
The Flat Quadrilateral Theorem \ref{thm:flat_quad} now ensures that the sum is exactly \(2\pi\), so all angles must be exactly \(2\pi\).
The Flat Quadrilateral Theorem further states that the convex hull of the points \(\gamma_1(t),\gamma_1(t'),\gamma_2(t')\) and \(\gamma_2(t)\) is isometric to a rectangle in \(\E^2\), so the map \(j:\R\times[0,D]\to X\) mapping \((t,s)\in \R\times[0,D]\) to the point on \([\gamma_1(t),\gamma_2(t)]\) a distance \(s\) away from \(\gamma_1(t)\) is an isometry onto the convex hull of \(\gamma_1(\R)\cup\gamma_2(\R)\).
\end{proof}
This can be paraphrased as stating that asymptotic geodesic lines in a CAT(0) space are parallel.

\begin{theorem}[A Product Decomposition Theorem]
    Let \((X,d)\) be a CAT(0) space, let \(\gamma:\R\to X\) be a geodesic line, and denote by \(\pi:X\to \gamma(\R)\) the projection from \(X\) to the geodesically convex subspace \(\gamma(\R)\).
\parenum
\begin{enumerate}
    \item The union of the images of all geodesic lines \(\gamma_p\) in \(X\) asymptotic to \(\gamma\) is a geodesically convex subspace \(X_\gamma\) of \(X\), and \(\left(X_\gamma,d|_{X_\gamma}\right)\) is a CAT(0) space;
    \item \(X_\gamma\) is canonically isometric to \(X_\gamma^0\times \R\), where \(X_\gamma^0=\pi_\gamma^{-1}(\gamma(0))\), and \(X_\gamma^0\) is geodesically convex in \(X\).
\end{enumerate}
\end{theorem}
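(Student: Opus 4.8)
The plan is to derive everything from three earlier results: the Flat Strip Theorem (\ref{thm:flat_strip}), convexity of the metric (\ref{prop:conv_met}), and the obtuse-angle property of nearest-point projection (\ref{prop:projection_angle}). Throughout I write $\pi$ for the projection $\pi_\gamma$ onto the complete convex set $\gamma(\R)$, and for $p\in X_\gamma$ I let $\gamma_p$ denote a geodesic line through $p$ asymptotic to $\gamma$. The first thing I would record is that $\gamma_p$ is \emph{unique}: if two asymptotic lines were distinct they would bound a flat strip $\R\times[0,D]$ with $D>0$ by \ref{thm:flat_strip}, and the two boundary lines of such a strip never meet, so two asymptotic lines sharing the point $p$ must coincide. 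I would also fix, once and for all, the arc-length parametrization of $\gamma_p$ with $\gamma_p(0)=p$ and the orientation agreeing with that of $\gamma$.

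For part (1) I would prove geodesic convexity of $X_\gamma$ directly. Given $p,q\in X_\gamma$, the lines $\gamma_p,\gamma_q$ are asymptotic to $\gamma$, hence to one another, so by \ref{thm:flat_strip} they bound a flat strip whose image is the convex hull of $\gamma_p(\R)\cup\gamma_q(\R)$; in particular this image contains the segment $[p,q]$. Every point of the strip lies on one of the lines $\R\times\{s\}$, each of which is parallel to $\gamma_p$ and therefore asymptotic to $\gamma$, so $[p,q]\subseteq X_\gamma$. Once convexity is known, the CAT($0$) property of $(X_\gamma,d|_{X_\gamma})$ is automatic: geodesics between points of $X_\gamma$ already lie in $X_\gamma$ and coincide with the ambient geodesics, so geodesic triangles in $X_\gamma$ are geodesic triangles in $X$ with the same comparison triangles, and the CAT($0$) inequality is inherited.

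For part (2) I would introduce coordinates through the projection. From the proof of \ref{thm:flat_strip}, projecting $\gamma_p$ onto $\gamma$ is a translation, so with the parametrization fixed above one has $\pi(\gamma_p(t))=\gamma(t+c_p)$; when $p\in X_\gamma^0$ the equation $\pi(p)=\gamma(0)$ forces $c_p=0$, hence $\pi(\gamma_p(t))=\gamma(t)$. I then define $\Phi\colon X_\gamma^0\times\R\to X_\gamma$ by $\Phi(p,t)=\gamma_p(t)$. Surjectivity holds because any $r\in X_\gamma$ lies on $\gamma_r$, along which $\pi$ is a bijection onto $\gamma(\R)$, so $\gamma_r$ meets $X_\gamma^0$ in exactly one point; injectivity follows from uniqueness of $\gamma_p$ together with the fact that $\pi$ is injective along each line. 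Everything then reduces to the distance formula
\[
d\bigl(\gamma_p(s),\gamma_q(t)\bigr)=\sqrt{d(p,q)^2+(s-t)^2},\qquad p,q\in X_\gamma^0 .
\]
To establish it I would first use \ref{prop:conv_met}: the map $t\mapsto d(\gamma_p(t),\gamma_q(t))$ is convex and, the lines being asymptotic, bounded, hence constant and equal to $d(p,q)$. Writing the flat strip between $\gamma_p$ and $\gamma_q$ as $\R\times[0,D]$ with $\gamma_p(s)=(s+\alpha,0)$ and $\gamma_q(t)=(t+\beta,D)$, the Euclidean distance there is $(s-t+\alpha-\beta)^2+D^2$.

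The main obstacle, and the only place where the choice $X_\gamma^0=\pi^{-1}(\gamma(0))$ is really used, is to prove the phase alignment $\alpha=\beta$. Here I would invoke that $\pi$ is $1$-Lipschitz, which is the standard consequence of \ref{prop:projection_angle}: the angle at a projection point being at least $\pi/2$ gives, via the Euclidean law of cosines in the comparison triangle, $d(a,b)\ge d(\pi a,\pi b)$. Since $\pi(\gamma_p(s))=\gamma(s)$ and $\pi(\gamma_q(t))=\gamma(t)$, this yields $d(\gamma_p(s),\gamma_q(t))\ge|s-t|$ for all $s,t$, so from the strip expression $(s-t+\alpha-\beta)^2+D^2\ge(s-t)^2$ for all $s,t\in\R$; letting $s-t\to\pm\infty$ the linear term $2(s-t)(\alpha-\beta)$ forces $\alpha=\beta$. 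Then the strip formula becomes $(s-t)^2+D^2$, and evaluating at $s=t=0$ gives $D=d(p,q)$, which is the desired formula; thus $\Phi$ is an isometry onto the $\ell^2$-product $X_\gamma^0\times\R$. Finally, $X_\gamma^0$ is geodesically convex: transporting a geodesic between two points of $X_\gamma^0\times\{0\}$ through the isometry $\Phi^{-1}$, the $\R$-coordinate is a $1$-Lipschitz path from $0$ to $0$ whose total variation must vanish by a Minkowski-inequality length comparison, so the geodesic stays in the slice $X_\gamma^0\times\{0\}$; hence $[p,q]\subseteq X_\gamma^0$, and since $X_\gamma$ is convex in $X$ the same segment is the ambient geodesic, giving convexity of $X_\gamma^0$ in $X$.
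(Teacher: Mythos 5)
Your treatment of part (1) and your uniqueness-of-the-parallel-line observation match the paper (the paper gets uniqueness from convexity and boundedness of \(t\mapsto d(\gamma_1(t),\gamma_2(t))\) rather than from disjointness of the strip's boundary lines, but the two are interchangeable). For part (2), however, you take a genuinely different route. The paper's core device is a holonomy argument: for three pairwise parallel lines it defines nearest-point projections \(P_{\gamma_i,\gamma_j}\) and shows, using the flat-strip distance formulas and a growth argument, that the composition \(P_{\gamma_1,\gamma_3}\circ P_{\gamma_3,\gamma_2}\circ P_{\gamma_2,\gamma_1}\) is the identity on \(\gamma_1\); from this it deduces both that \(X_\gamma^0\) is geodesically convex and that \(d(\gamma_x,\gamma_y)=d(x,y)\) for \(x,y\in X_\gamma^0\), and then reads off the product isometry from the Flat Strip Theorem. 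You instead use the fact (from the proof of Theorem \ref{thm:flat_strip}) that \(\pi\) restricted to each parallel line is a parameter translation, normalize so that \(\pi(\gamma_p(t))=\gamma(t)\) for \(p\in X_\gamma^0\), and prove the explicit formula \(d(\gamma_p(s),\gamma_q(t))=\sqrt{d(p,q)^2+(s-t)^2}\), the phase alignment \(\alpha=\beta\) coming from \(1\)-Lipschitzness of \(\pi\) plus letting \(s-t\to\pm\infty\). This buys a cleaner endgame: the isometry is explicit, and your Minkowski-inequality argument for the convexity of \(X_\gamma^0\) is a transparent replacement for the paper's use of the holonomy identity.

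The one step you cannot wave through is the \(1\)-Lipschitzness of \(\pi\), and your parenthetical derivation of it is incorrect as stated. The obtuse-angle property (Proposition \ref{prop:projection_angle}) combined with Proposition \ref{prop:equiv}(4) and the Euclidean law of cosines yields \(d(a,z)^2\ge d(a,\pi(a))^2+d(\pi(a),z)^2\), but only for \(z\in C\): one application of the law of cosines compares \(a\) with a point \emph{of} \(C\), not two points outside \(C\). What your argument needs is \(d(\gamma_p(s),\gamma_q(t))\ge d(\gamma(s),\gamma(t))\) where neither point lies on \(\gamma(\R)\), and that is exactly the case the one-line argument does not cover. The fact is true and standard (nearest-point projection onto a complete convex subset of a CAT(\(0\)) space is non-expansive), and it is provable with the paper's own tools: apply the displayed inequality at both \(\pi(a)\) and \(\pi(b)\), then subembed the quadruple \((a,\pi(b),b,\pi(a))\) in \(\E^2\) using the CAT(\(0\)) four-point condition proved in the paper; the two resulting right-angle inequalities force \(\overline{a}\) and \(\overline{b}\) into opposite half-planes cut off by the perpendiculars to \([\overline{\pi(a)},\overline{\pi(b)}]\) at its endpoints, whence \(|\overline{a}-\overline{b}|\ge|\overline{\pi(a)}-\overline{\pi(b)}|\). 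So the gap is fixable, but it does require an argument that is nowhere in the paper; alternatively you can sidestep it entirely by running the paper's holonomy computation, which needs nothing beyond the strip formulas and the triangle inequality.
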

\begin{proof}
\parenum
\begin{enumerate}
\item Let \(x_1,x_2\in X_\gamma\) be points, and let \(\gamma_1,\gamma_2\) be geodesic lines parallel to \(\gamma\) containing \(x_1\) and \(x_2\), respectively.
In particular \(\gamma_1\) and \(\gamma_2\) are asymptotic, so the Flat Strip Theorem \ref{thm:flat_strip} tells us that the convex hull in \(X\) of the geodesic lines \(\gamma_1,\gamma_2\) is isometric to a flat strip \(\R\times [0,D]\) for some real number \(0<D<\infty\).
Then the geodesic in \(X\) between \(x_1\) and \(x_2\) is contained in the convex hull of \(\gamma_1(\R)\cup\gamma_2(\R)\) in \(X\), which is contained in \(X_\gamma\).
\(X_\gamma\) is therefore a geodesically convex subspace of the CAT(0) space \(X\), and thus itself a CAT(0) space.

\item First we show that for all \(x\in X_\gamma\) there is a unique geodesic \(\gamma_x\) parallel to \(\gamma\) containing the point \(x\).
Assume there are geodesics \(\gamma_1\) and \(\gamma_2\) both parallel to \(\gamma\) and both containing \(x\).
As \(X_\gamma\) is a CAT(0) space, the function \(t\mapsto d(\gamma_1(t),\gamma_2(t))\) is convex by Proposition \ref{prop:conv_met}.
Both \(\gamma_1\) and \(\gamma_2\) are parallel to \(\gamma\), and therefore asymptotic to \(\gamma\).
They must therefore also be asymptotic to each other, i.e. \(t\mapsto d(\gamma_1(t),\gamma_2(t))\) is bounded.
As it is also convex it must be constant.
We now reparametrize \(\gamma_1\) and \(\gamma_2\) such that \(\gamma_1(0)=\gamma_2(0)=x\).
In particular \(d(\gamma_1(0),\gamma_2(0))\), and as this is constant we must have \(d(\gamma_1(t),\gamma_2(t))=0\) for all \(t\in\R\).
This means that \(\gamma_1\) and \(\gamma_2\) are the same geodesic, up to reparametrization.

Next, for any two geodesics \(\gamma_i,\gamma_j\) we define functions \(P_{\gamma_i,\gamma_j}(t)\) to be the point on \(\gamma_j\) closest to \(\gamma_i(t)\).
We claim that \(P=P_{\gamma_1,\gamma_3}\circ P_{\gamma_3,\gamma_2}\circ P_{\gamma_2, \gamma_1}=P_{\gamma_1,\gamma_1}\) is the identity on \(\gamma_1\).
Assume not.
Then \(P(t)=\gamma_1(t+b)\) for all \(t\in\R\) and some real non-zero \(b\in\R\).
Next define \(a_1=d(\gamma_1(\R),\gamma_2(\R)),a_2=d(\gamma_2(\R),\gamma_3(\R))\) and \(a_3=d(\gamma_3(\R),\gamma_1(\R))\), and let \(a=a_1+a_2+a_3\).
All pairs of geodesics are parallel, the Flat Strip Theorem tells us that they are parallel in \(\E^2\).
Here we have
\begin{align*}
d(\gamma_1(t),\gamma_2(t+s))    &=\sqrt{a_1^2+s^2},\\
d(\gamma_2(t),\gamma_3(t+s))    &=\sqrt{a_2^2+s^2},\\
d(\gamma_3(t),\gamma_1(t+s)     &=\sqrt{a_3^2+(s-b)^2},
\end{align*}
for all \(s\in\R\).
This in turn gives us 
\begin{align*}
d(\gamma_1(0),\gamma_1(as+b)&\leq d(\gamma_1(0),\gamma_2(a_1s))+d(\gamma_2(a_1s),\gamma_3((a_1+a_2)s))+d(\gamma_3((a_1+a_2)s),\gamma_1(as+b))\\
&= a_1\sqrt{1+s^2}+a_2\sqrt{1+s^2}+a_3\sqrt{1+s^s}\\
&= a\sqrt{s^2}.
\end{align*}
As \(\gamma_1\) is a geodesic we have \(d(\gamma_1(0),\gamma_1(as+b))=|as+b|\), so \((as+b)^2\leq a^2(1+s^2)\).
This implies \(b=0\), a contradiction.

We are now ready to show that \(X_\gamma^0=\pi_\gamma^{-1}(\gamma(0))\) is geodesically convex in \(X\), i.e. that for every \(x,y\in X_\gamma^0\), the geodesic segment \([x,y]\) is entirely contained in \(X_\gamma^0\).
Let \(\gamma_x,\gamma_y\) be the unique geodesics parallel to \(\gamma\) containing \(x\) and \(y\), respectively, reparametrized such that \(\gamma_x(0)=x\) and \(\gamma_y(0)=y\).
By the Flat Strip Theorem and part (1) of the proposition, the geodesic \([x,y]\) is entirely contained in \(X_\gamma\).
As was shown before,
\[P_{\gamma,\gamma_x}\circ P_{\gamma_x,\gamma_y}\circ P_{\gamma_y,\gamma}=P_{\gamma,\gamma}.\]
As \(d(\gamma,\gamma_x)=d(\gamma(0),\gamma_x(0))=d(\gamma(0),x)\), we get
\[\gamma(0)=P_{\gamma,\gamma}(0)=P_{\gamma,\gamma_x}\circ P_{\gamma_x,\gamma_y}\circ P_{\gamma_y,\gamma}(0)=P_{\gamma,\gamma_y}\circ P_{\gamma_y,\gamma_x}(\gamma_x(0)),\]
which is true if and only if
\[P_{\gamma,\gamma_y}^{-1}(\gamma(0))=P_{\gamma_y,\gamma_x}(\gamma_0)=P_{\gamma_y,\gamma_x}(x)\]
The left hand side is simply \(\gamma_y(0)=y\), so
\[y=P_{\gamma_y,\gamma_x}(x).\]
This finally gives us
\[d(\gamma_x,\gamma_y)=d(\gamma_x(0),\gamma_y(0))=d(x,y)\]
for all \(x,y\in X_\gamma^0\).
The Flat Strip Theorem then lets us conclude that \([x,y]\) is entirely contained in \(X_\gamma^0\) for all \(x,y\in X_{\gamma}^0\), proving that \(X_\gamma^0\) is indeed geodesically convex in \(X\).

To show that \(X_\gamma\) is isometric to \(X_\gamma^0\times\R\) we define the map \(j:X_\gamma\times\R\to X_\gamma\) by \((x,t)\mapsto \gamma_x(t)\).
Each \(x\in X_\gamma\) is contained in some geodesic \(\gamma_x\) asymptotic to \(\gamma\), so \(j\) is surjective.
Before we showed that such a geodesic is unique, so \(j\) is also injective, and thereby a bijection.
Endowing \(X_\gamma^0\) with the product metric between \(d|_{X_\gamma^0}\) and \(d_\R\)
\[d((x,t),(y,t'))=\sqrt{d(x,y)^2+|t-t'|^2}\]
for all \((x,t),(y,t')\in X_{\gamma}^0\), \(j\) being an isometry follows directly from the Flat Strip Theorem.
\end{enumerate}
\end{proof}

\section{The Cartan--Hadamard Theorem}
Notice that all different notions of non-positive curvature discussed above are \textit{local} conditions.
The Cartan--Hadamard Theorem tells us a global consequence of non-positive curvature:
\begin{theorem}[The Cartan--Hadamard Theorem]\label{thm:CHT}
    Let \((X,d)\) be a complete length space that is  path connected (i.e. between every two points there is a continuous path).
    Then:
\parenum\begin{enumerate}
    \item There is a unique length metric \(\twiddle{d}\) on the universal cover \(\twiddle{X}\) such that the covering map \(p:(\twiddle{X},\twiddle{d})\to (X,d)\) is a local isometry;
    \item If the metric \(d\) is locally convex, i.e. if every \(x\in X\) has a geodesically convex neighborhood where the induced metric is convex, then \(\twiddle{d}\) is globally convex;
    \item \(\twiddle{X}\) is uniquely geodesic;
    \item If \(X\) is a locally CAT(0) space (see Definition \ref{def:loc_cat}), then \((\twiddle{X},\twiddle{d})\) is a (globally) CAT(0) space.
\end{enumerate}
\end{theorem}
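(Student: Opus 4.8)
\medskip
The plan is to establish the four parts in order, building everything on a single local-to-global engine for local geodesics; I expect part (2) to be the genuine obstacle, with parts (3)--(4) following with comparatively little extra work.

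For part (1) I would first observe that the standing hypotheses force $X$ to be locally contractible: the geodesically convex neighborhoods are contractible via the retraction built in \ref{prop:apmid_close}(3), so $X$ is semi-locally simply connected and the topological universal cover $p:\twiddle X\to X$ exists. To equip $\twiddle X$ with a length metric I pull back the length structure of $X$: for a path $\twiddle c$ in $\twiddle X$ set $\twiddle L(\twiddle c):=L(p\circ\twiddle c)$ and let $\twiddle d(\twiddle x,\twiddle y)$ be the infimum of $\twiddle L$ over paths joining the two points, exactly as in the definition of the associated length metric. Because $p$ carries each sheet lying over an evenly covered neighborhood homeomorphically, and lengths are preserved by construction, $p$ is a local isometry; and any length metric making $p$ a local isometry must assign to every curve the length of its $p$-image, hence produce the same infimum, which gives uniqueness.

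For parts (2)--(3) the first step is to note that $(\twiddle X,\twiddle d)$ is complete: the inequality $d(p\twiddle x,p\twiddle y)\le\twiddle d(\twiddle x,\twiddle y)$ carries $\twiddle d$-Cauchy sequences to $d$-Cauchy sequences, which converge in $X$, and the local isometry together with path lifting returns a limit upstairs. The heart of the matter is then the following local-to-global lemma for a complete locally convex length space $Y$: every path is homotopic rel endpoints to a \emph{unique} local geodesic, and homotopic local geodesics with common endpoints coincide. I would prove this by a homotopy-straightening argument --- given a homotopy $H:[0,1]^2\to Y$ between two such local geodesics, subdivide the square into a grid so fine that each cell lands in a geodesically convex neighborhood on which the metric is convex, replace the slices by their local geodesics, and propagate the convexity inequality of \ref{prop:conv_mid} cell by cell to force the two boundary geodesics to agree. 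Since $\twiddle X$ is simply connected there is a single homotopy class of paths between any two points, so the lemma yields a unique local geodesic between them; a short argument shows it realizes the distance, hence is a genuine geodesic and $\twiddle X$ is uniquely geodesic, giving part (3).

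Global convexity (part (2)) I would then deduce from local convexity: interpolating between two geodesics $\gamma_1,\gamma_2$ by the geodesics $[\gamma_1(t),\gamma_2(t)]$ and subdividing finely, one applies the local inequality of \ref{prop:conv_mid} on each small configuration and chains the results with the triangle inequality (as in the dyadic induction inside the proof of \ref{prop:conv_mid}) to obtain $\twiddle d(\gamma_1(t),\gamma_2(t))\le(1-t)\,\twiddle d(\gamma_1(0),\gamma_2(0))+t\,\twiddle d(\gamma_1(1),\gamma_2(1))$. For part (4), if $X$ is locally CAT($0$) then it is in particular locally convex, so (1)--(3) apply; moreover each sufficiently small ball of $\twiddle X$ is isometric to a CAT($0$) ball of $X$, so small geodesic triangles satisfy the CAT($0$)-inequality. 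To globalize I would use the angle formulation \ref{prop:equiv}(4): Alexandrov angles are local data controlled by small triangles, so I subdivide an arbitrary geodesic triangle into small ones, invoke local CAT($0$) on each, and recombine the comparison triangles with Alexandrov's Lemma (\ref{lem:alex}) --- exactly the mechanism of the (4)$\Rightarrow$(2) step of \ref{prop:equiv} and of the Flat Triangle Lemma --- to recover the comparison inequality for the whole triangle, whence $\twiddle X$ is CAT($0$).

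The main obstacle is the homotopy-straightening lemma underlying parts (2) and (3): converting ``local convexity on each grid cell'' into a rigid global conclusion is delicate, because the two local geodesics being compared drift apart across the homotopy while the convex neighborhoods are only local. Controlling the accumulation of the cellwise convexity estimates --- ensuring the grid can be chosen uniformly fine and that the chained inequalities actually close up --- is the point on which the whole theorem rests; once it is secured, unique geodesics, global convexity, and (via Alexandrov's Lemma) the global CAT($0$)-inequality all follow.
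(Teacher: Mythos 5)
Your plan for parts (1) and (4) coincides with the paper's: part (1) is the pull-back length metric together with the local-isometry and uniqueness argument of Proposition \ref{prop:p_loc_iso}, and part (4) is exactly the Gluing Lemma (Lemma \ref{lem:gluing}) plus Alexandrov's Patchwork, run through the angle criterion of Proposition \ref{prop:equiv}. For parts (2)--(3) you take a different route --- straightening a homotopy over a fine grid --- whereas the paper builds the universal cover concretely as the space \(\twiddle{X}_{x_0}\) of local geodesics issuing from \(x_0\), metrized by the sup-metric \(d_s\), and shows it is contractible, complete (Lemma \ref{lem:d_twiddle_complete}), and a covering space via the exponential map (Corollary \ref{cor:unique_geo}).

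The genuine gap is the one you yourself flag in your last paragraph: the local-to-global engine is named but never constructed, and it is not a routine propagation of Proposition \ref{prop:conv_mid}. Both your route and the paper's rest on the following statement (the paper's Lemma \ref{lem:induced_convex}): given a local geodesic \(\sigma\) from \(x\) to \(y\) and points \(\overline{x},\overline{y}\) with \(d(x,\overline{x})<\varepsilon\) and \(d(y,\overline{y})<\varepsilon\), there \emph{exists} a local geodesic \(\overline{\sigma}\) from \(\overline{x}\) to \(\overline{y}\) for which \(t\mapsto d(\sigma(t),\overline{\sigma}(t))\) is convex, and it is unique. Uniqueness and the convexity propagation are the easy half; the existence half is where all the work lies, and the paper does it by the induction \(P(A)\Rightarrow P(3A/2)\), building two interleaved sequences of local geodesics over overlapping subintervals, proving they are Cauchy inside the complete balls \(\overline{B}(\sigma(t),2\varepsilon)\), and passing to a uniform limit. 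Your grid argument silently presupposes exactly this: ``replace the slices by their local geodesics'' assumes an arbitrary continuous slice of the homotopy can be straightened to a local geodesic at all, and pushing the convexity inequality ``cell by cell'' assumes local geodesics with prescribed nearby endpoints exist across each cell. Without the completeness-driven iteration there is no mechanism producing them, so the lemma on which you hang parts (2), (3), and (4) remains unproved. A second, smaller error: in part (1) you justify the existence of the universal cover by claiming the hypotheses force \(X\) to be locally contractible via geodesically convex neighborhoods and Proposition \ref{prop:apmid_close}(3). Part (1) assumes only a complete, path-connected length space, where no geodesically convex neighborhoods need exist, and Proposition \ref{prop:apmid_close} is a statement about CAT(0) spaces; local convexity only enters from part (2) onward, so this justification fails where you place it.
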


\subsection{The induced length metric on the universal cover}
For the first part of Theorem \ref{thm:CHT} we will need to first construct a length metric \(\twiddle{d}\) on the universal cover \(\twiddle{X}\) of a complete connected metric space.
\begin{definition}
\label{def::ind_metric_univ}
Let \((X,d)\) be a length space with universal cover $\twiddle{X}$, and let \(p\) be the covering map \(p:\twiddle{X}\to X\). 
Given a path \(\sigma:[0,1]\to\twiddle{X}\) we define its length to be the length of \(p\circ\sigma\) in \(X\).
We then define a metric \(\twiddle{d}\) on \(\twiddle{X}\) by letting \(\twiddle{d}(\twiddle{x},\twiddle{y})\) be the infimum of the lengths of all rectifiable curves between \(\twiddle{x}\in\twiddle{X}\) and \(\twiddle{y}\in\twiddle{X}\), completely analogously to the length metric on metric spaces.
We call \(\twiddle{d}\) the \textit{metric induced on \(\twiddle{X}\) by \(p\)}.
\end{definition}
\begin{remark}
This is indeed a metric: the length metric in \(X\) is a metric, so \(\twiddle{d}\) must be non-negative, symmetric, and satisfy the triangle inequality.
We also know that \(\twiddle{X}\) is Hausdorff as it is the covering space of a metric space, all of which are Hausdorff. Therefore \(\twiddle{d}\) must also necessarily be positively definite. 
\end{remark}

\begin{proposition}\label{prop:p_loc_iso}
Let \(X\) be a length space with universal cover \(\twiddle{X}\) equipped with the metric \(\twiddle{d}\) induced on \(\twiddle{X}\) by the covering map \(p\).
Then:\parenum
\begin{enumerate}
    \item \(p\) is a local isometry;
    \item \(\twiddle{d}\) is a length metric;
    \item \(\twiddle{d}\) is the only metric on \(\twiddle{X}\) satisfying (1) and (2).
\end{enumerate}
\end{proposition}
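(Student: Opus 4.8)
The plan is to handle the three claims in order, putting essentially all of the work into (1); claims (2) and (3) will then both fall out of a single principle, namely that \emph{a local isometry over a length space forces the length of a curve to equal the length of its projection}.

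For (1), I would use that the covering map $p$ is a local homeomorphism with unique path lifting. Fix $\widetilde{x}_0\in\widetilde{X}$, choose an evenly covered ball $B(p\widetilde{x}_0,R)\subseteq X$, and let $\widetilde{U}$ be the sheet containing $\widetilde{x}_0$, so that $p|_{\widetilde{U}}$ is a homeomorphism onto $B(p\widetilde{x}_0,R)$. Taking $V=(p|_{\widetilde{U}})^{-1}(B(p\widetilde{x}_0,R/4))$, I would show $\widetilde{d}(\widetilde{x},\widetilde{y})=d(p\widetilde{x},p\widetilde{y})$ for all $\widetilde{x},\widetilde{y}\in V$. The inequality $\widetilde{d}(\widetilde{x},\widetilde{y})\geq d(p\widetilde{x},p\widetilde{y})$ is immediate and in fact global: by Definition \ref{def::ind_metric_univ} any rectifiable $\sigma$ in $\widetilde{X}$ projects to a curve of equal length joining $p\widetilde{x},p\widetilde{y}$, and $d$ is a length metric.

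For the reverse inequality I would use that $X$ is a length space to pick, for small $\varepsilon>0$, a rectifiable curve in $X$ from $p\widetilde{x}$ to $p\widetilde{y}$ of length $<d(p\widetilde{x},p\widetilde{y})+\varepsilon$. Since each point of this curve lies at distance at most its arclength from $p\widetilde{x}$, the whole curve stays inside $B(p\widetilde{x},d(p\widetilde{x},p\widetilde{y})+\varepsilon)\subseteq B(p\widetilde{x}_0,R)$ once $\varepsilon$ is small; this is exactly where the slack in the radius $R/4$ is spent. Unique path lifting then produces a lift starting at $\widetilde{x}$ that, by injectivity of $p|_{\widetilde{U}}$, stays in $\widetilde{U}$ and terminates at $(p|_{\widetilde{U}})^{-1}(p\widetilde{y})=\widetilde{y}$. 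This lift has length $<d(p\widetilde{x},p\widetilde{y})+\varepsilon$, whence $\widetilde{d}(\widetilde{x},\widetilde{y})\leq d(p\widetilde{x},p\widetilde{y})$. I expect this to be the \textbf{main obstacle}: one must guarantee that the near-minimizing curve never leaves the evenly covered ball, so that its lift closes up at $\widetilde{y}$ rather than at some other point of the fibre. This is only radius bookkeeping, but it is the genuine content of the argument.

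For (2) and (3) I would first isolate the principle above. Because $p$ is a local isometry, each point of $\widetilde{X}$ has a neighborhood on which $\widetilde{d}$ and $d\circ(p\times p)$ coincide; covering a curve $\sigma\colon[0,1]\to\widetilde{X}$ by finitely many such neighborhoods (compactness and uniform continuity) and invoking additivity of length over subdivisions (as used in Proposition \ref{prop:cont_res}) gives $L_{\widetilde{d}}(\sigma)=L(p\circ\sigma)$, where $L_{\widetilde{d}}$ denotes length measured with $\widetilde{d}$. Claim (2) is then immediate, since $(\widetilde{d})_i(\widetilde{x},\widetilde{y})=\inf_\sigma L_{\widetilde{d}}(\sigma)=\inf_\sigma L(p\circ\sigma)=\widetilde{d}(\widetilde{x},\widetilde{y})$, so $\widetilde{d}$ equals its own induced length metric and is a length metric in the sense of Definition \ref{def::length metric}. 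For (3), if $d'$ is any metric making $p$ a local isometry and itself a length metric, the identical principle yields $L_{d'}(\sigma)=L(p\circ\sigma)$ for every curve, whence $d'(\widetilde{x},\widetilde{y})=\inf_\sigma L_{d'}(\sigma)=\inf_\sigma L(p\circ\sigma)=\widetilde{d}(\widetilde{x},\widetilde{y})$, forcing $d'=\widetilde{d}$. Thus once the local-isometry principle is in hand, uniqueness costs nothing beyond rereading the computation for (2).
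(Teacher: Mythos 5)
Your proposal is correct and takes essentially the same approach as the paper: part (1) is proved exactly as there (the global inequality $d(p\widetilde{x},p\widetilde{y})\leq\widetilde{d}(\widetilde{x},\widetilde{y})$, then a near-minimizing curve supplied by the length-space property of $X$, radius bookkeeping to keep it inside the evenly covered neighborhood, and lifting it through the sheet), while parts (2) and (3) follow from the fact that a local isometry forces $L_{\widetilde{d}}(\sigma)=L(p\circ\sigma)$ for every curve. The only visible difference is cosmetic: you compare a competing metric $d'$ directly to $\inf_\sigma L(p\circ\sigma)$, whereas the paper phrases the same computation by observing that the identity map $(\widetilde{X},\widetilde{d})\to(\widetilde{X},\widetilde{d}\,')$ is a local isometry.
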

\begin{proof}
\parenum\begin{enumerate}
    \item Let \(\twiddle{x}\in\twiddle{X}\) be given, and let \(x=p(\twiddle{x})\).
We show that the restriction of \(p\) to \(B(\twiddle{x},r)\) is an isometry onto \(B(x,r)\) for sufficiently small \(r\).
Let \(\twiddle{U}_{\twiddle{x}}\) be an open neighborhood of \(\twiddle{x}\) such that \(p|_{\twiddle{U}_{\twiddle{x}}}\) is a homeomorphism onto its image \(U_x\), which is possible as covering maps by definition are local homeomorphisms.
Denote the inverse \(s=p^{-1}:U_x\to\twiddle{U_{\twiddle{x}}}\), which is then continuous and bijective.
Choose \(r>0\) small enough such that \(B(x,3r)\subseteq U_x\) and \(B(\twiddle{x},3r)\subseteq\twiddle{U}_{x}\), where $B(\twiddle{x},3r):= \{ \twiddle{y} \in \twiddle{U}_{x}  \; \vert \; \twiddle{d}(\twiddle{x},\twiddle{y}) < 3r\}$.
As \(s\) is continuous and injective, so is \(s|_{B(x,r)}\).
This in turn means that \(s|_{B(x,r)}\) is a homeomorphism onto its image \(Y_x\subseteq\twiddle{U}_{\twiddle{x}}\).
\(B(x,r)\) is open, so we conclude that \(Y_x\) is open in \(\twiddle{X}\).

First we show that \(s|_{B(x,r)}\) preserves distance, i.e. is an isometry between \(B(x,r)\) and \(Y_x\).
Let \(\twiddle{y},\twiddle{z}\in Y_x\), and take \(0<\varepsilon<r\).
Define \(y=p(\twiddle{y})\in B(x,r)\) and \(z=p(\twiddle{z})\in B(x,r)\).
\(X\) is a length space, so there is a curve \(\sigma:[0,1]\to X\) between \(y\) and \(z\) whose length is strictly smaller than \(d(y,z)+\varepsilon\).
Since both are in the open ball \(B(x,r)\) we also have \(d(y,z)<2r\), so we can choose a very small \(\varepsilon\) such that \(d(y,z)+\varepsilon<2r\), whereby \(L(\sigma)<2r\).
We conclude that \(\sigma\) is entirely contained in the open ball \(B(x,3r)\subseteq U_x\) as
\[d(x,\sigma(t))\leq d(x,y)+d(y,\sigma(t))<2r+r=3r.\]
This in turn means that \(s\circ\sigma=\twiddle{\sigma}\) is a curve entirely contained in \(\twiddle{U}_{\twiddle{x}}\) between \(\twiddle{y}\) and \(\twiddle{z}\) with \(p\circ\twiddle{\sigma}=\sigma\), whereby 
\[d(y,z)\leq \twiddle{d}(\twiddle{y},\twiddle{z})\leq L(p\circ\twiddle{\sigma})=L(\sigma)<d(y,z)+\varepsilon.\]
As \(\varepsilon\) can be chosen arbitrarily small we get \(d(y,z)=\twiddle{d}(\twiddle{y},\twiddle{z})\), showing that \(s\) preserves distances.

In particular we have, \(\twiddle{d}(\twiddle{x},\twiddle{y})=d(x,y)<r\), so \(Y_x\subseteq B(\twiddle{x},r) := \{ \twiddle{y} \in \twiddle{U}_{x}  \; \vert \; \twiddle{d}(\twiddle{x},\twiddle{y}) < r\} \).
Left is to show that \(B(\twiddle{x},r)\subseteq Y_x\).
By the choice of \(r\) we have \(B(\twiddle{x},r)\subseteq B(\twiddle{x},3r)\subseteq \twiddle{U}_{\twiddle{x}}\), and as \(p:\twiddle{U}_{\twiddle{x}}\to U_x\) is a bijection we have \(s(z)=s(p(\twiddle{z}))=\twiddle{z}\in B(\twiddle{x},r)\) for any \(\twiddle{z}\in B(\twiddle{x},r)\) with \(z=p(\twiddle{z})\).
But we know \(z=p(\twiddle{z})\in B(x,r)\) as \(d(x,z)\leq\twiddle{d}(\twiddle{x},\twiddle{z})<r\), and as \(s|_{B(x,r)}:B(x,r)\to Y_x\) is a bijection we must have \(s(z)=\twiddle{z}\in Y_x\).
This in turn means that \(B(\twiddle{x},r)\subseteq Y_x\), whereby \(Y_x=B(\twiddle{x},r)\).

We have now shown that every \(\twiddle{x}\in \twiddle{X}\) has a neighborhood which is isometric to a neighborhood of \(x=p(\twiddle{x})\), proving (1).

    \item By (1) \(p\) is a local isometry, so the length of a curve \(\sigma\) in \(\twiddle{X}\) is the same as its image \(p(\sigma)\) in \(X\), whereby \(\twiddle{d}\) is necessarily a length metric on \(\twiddle{X}\).

    \item Let \(\twiddle{d}'\) be a metric satisfying (1) and (2).
Then the identity map \(\text{id}:(\twiddle{X},\twiddle{d})\to (\twiddle{X},\twiddle{d}')\) is a local isometry, so the length of \textit{all} curves is preserved.
This means that \(\twiddle{d}(\twiddle{x},\twiddle{y})=\twiddle{d}'(\twiddle{x},\twiddle{y})\) for all \(\twiddle{x},\twiddle{y}\in\twiddle{X}\), so \(\twiddle{d}'\) is necessarily \(\twiddle{d}\).
\end{enumerate}
\end{proof}
\begin{remark}
The proof doesn't require \(p\) to be a covering map, it is enough that it is a local homeomorphism.
We can therefore replace ``universal cover'' and ``covering map'' with ``Hausdorff topological space'' and ``local homeomorphism'', and the theorem still holds (we need the topological space to be Hausdorff to ensure that \(\twiddle{d}\) is a metric).
We are however only interested in the universal cover of a metric space which is always Hausdorff, so the theorem has been stated in terms of this.
\end{remark}

Proposition \ref{prop:p_loc_iso} proves part (1) of Theorem \ref{thm:CHT}.

\subsection{Convexity of the induced length metric}
\begin{lemma}\label{lem:induced_convex}
Let \((X,d)\) be a metric space with locally complete and locally convex metric \(d\), i.e. where each point \(x\in X\) is contained in a geodesically convex and complete neighborhood \(U_x\) in which the induced metric is convex (see Proposition \ref{prop:conv_mid}).
Let \(\sigma:[0,1]\to X\) be a local geodesic between two points \(x,y\in X\), and let \(\varepsilon>0\) be such that for every \(t\in[0,1]\), the induced metric on the closed ball \(\overline{B}(\sigma(t),2\varepsilon)\) is complete and convex.
Then:
\parenum\begin{enumerate}
    \item For every points \(\overline{x},\overline{y}\in X\) with the properties \(d(x,\overline{x})<\varepsilon\) and \(d(y,\overline{y})<\varepsilon\), there is a unique local geodesic \(\overline{\sigma}:[0,1]\to X\) connecting \(\overline{x}\) to \(\overline{y}\) such that the map \(t\mapsto d(\sigma(t),\overline{\sigma}(t))\) is convex;
    \item \[L(\overline{\sigma})\leq L(\sigma)+d(x,\overline{x})+d(y,\overline{y}).\]
\end{enumerate}
\end{lemma}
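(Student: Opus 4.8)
The plan is to reduce everything to the behaviour of geodesics inside a single convex ball, where the situation is controlled by Proposition~\ref{prop:conv_mid}, and then to construct $\overline\sigma$ by solving a boundary-value problem via a straightening argument. First I would subdivide: since $\sigma$ is a local geodesic and $[0,1]$ is compact, a Lebesgue-number argument applied to the cover of $[0,1]$ by the neighbourhoods on which $\sigma$ is a genuine geodesic, together with uniform continuity of $\sigma$, yields a partition $0=t_0<\dots<t_n=1$ so fine that each restriction $\sigma|_{[t_{i-1},t_{i+1}]}$ is an honest geodesic contained in one of the balls $\overline B(\sigma(s),2\varepsilon)$ on which $d$ is complete and convex. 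On any such ball $B$ I record two facts used throughout: (i) any two points of $B$ are joined by a \emph{unique} geodesic in $B$ — existence from geodesic convexity, and uniqueness because, by Proposition~\ref{prop:conv_mid}, the distance between two geodesics sharing both endpoints is a convex function vanishing at $0$ and $1$, hence identically $0$; and (ii) for two geodesics in $B$ parametrised proportionally to arc length, $t\mapsto d(\cdot,\cdot)$ between them is convex.

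Two of the assertions then become soft, granted that the construction below produces a local geodesic $\overline\sigma$ lying within $\varepsilon$ of $\sigma$. For the \textbf{convexity of the gap} $f(t)=d(\sigma(t),\overline\sigma(t))$: near any $t_\ast$ both $\sigma$ and $\overline\sigma$ are genuine geodesics lying in the single ball $\overline B(\sigma(t_\ast),2\varepsilon)$, so (ii) makes $f$ convex on a neighbourhood of $t_\ast$; and a continuous, locally convex function on an interval is convex, so $f$ is convex on all of $[0,1]$. Consequently $\max_t f(t)=\max\{f(0),f(1)\}<\varepsilon$. For \textbf{uniqueness}, two admissible solutions $\overline\sigma_1,\overline\sigma_2$ then both stay within $\varepsilon$ of $\sigma$, hence within a common $2\varepsilon$-ball near each point; the same local-convexity argument applied to the pair $(\overline\sigma_1,\overline\sigma_2)$ gives a convex function vanishing at both endpoints, forcing $\overline\sigma_1=\overline\sigma_2$.

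The real work is \textbf{existence} together with the length bound~(2). The difficulty is that both endpoints $\overline x,\overline y$ are prescribed, so this is a boundary-value, not an initial-value, problem: forward propagation of a geodesic from $\overline x$ will generally miss $\overline y$. I would resolve this constructively. Start from the broken geodesic $\gamma^{(0)}$ with vertices $\overline x,\sigma(t_1),\dots,\sigma(t_{n-1}),\overline y$, whose segments are the unique geodesics of fact~(i) in the successive balls; by the triangle inequality the only defects occur at the two ends, so $L(\gamma^{(0)})\le L(\sigma)+d(x,\overline x)+d(y,\overline y)$. Then iterate the straightening map replacing each interior vertex by a point on the geodesic joining its neighbours. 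Fact~(ii) makes this map non-expanding — a discrete averaging with fixed boundary data — so the vertices remain within $\varepsilon$ of the $\sigma(t_i)$, an a priori confinement that keeps every iterate inside the convex balls; and since $X$ is complete the iterates converge to a configuration with all junctions straight, i.e.\ to a local geodesic $\overline\sigma$ from $\overline x$ to $\overline y$. Because each straightening step does not increase length, $L(\overline\sigma)\le L(\gamma^{(0)})$, which is exactly~(2); and since for the unperturbed data the initial broken geodesic already \emph{is} $\sigma$, the limit is the $\sigma$-shadowing solution rather than some distant geodesic.

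I expect the principal obstacle to be precisely this existence step: verifying that the straightening iteration is a genuine contraction (equivalently, that the deformation/connectedness argument showing its solution set is open and closed in $[0,1]$ goes through) and that the a priori bound keeps all iterates inside the balls where fact~(ii) is available. Both points are driven by the same two ingredients — convexity of the metric (Proposition~\ref{prop:conv_mid}), which supplies non-expansiveness and pins down the limit, and completeness of the relevant balls, which supplies the limit itself.
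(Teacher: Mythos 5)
Your treatment of the soft parts is correct and matches the paper's own proof almost verbatim: the gap function $t\mapsto d(\sigma(t),\overline{\sigma}(t))$ is locally convex because both curves run inside the convex balls $\overline{B}(\sigma(t),2\varepsilon)$, hence convex, and uniqueness follows since a convex function vanishing at both endpoints vanishes identically. Your route to the length bound (2) is genuinely different from the paper's and is attractive: you get it for free from the construction, via monotonicity of length under straightening plus lower semicontinuity of $L$ under uniform convergence, starting from the broken path through $\overline{x},\sigma(t_1),\dots,\sigma(t_{n-1}),\overline{y}$. The paper instead proves a separate comparison inequality $L(\sigma_2)\le L(\sigma_1)+d(\sigma_1(1),\sigma_2(1))$ for local geodesics sharing an initial point and applies it twice through an auxiliary local geodesic from $\overline{x}$ to $y$; your version avoids that detour, \emph{provided} the construction converges.

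That convergence is the genuine gap, and it is the heart of the lemma, not a routine verification. Fact (ii) only makes your straightening map non-expanding in the sup metric on vertex configurations; it is not a contraction, and ``non-expanding with a nearby fixed point'' does not force the iterates to converge, so the specific repair you propose (checking the map is a ``genuine contraction'') would fail as stated. What actually drives convergence is that the two boundary vertices are pinned: writing $u^{(k)}_i=d\bigl(v^{(k+1)}_i,v^{(k)}_i\bigr)$, convexity gives the componentwise bound $u^{(k+1)}\le M u^{(k)}$, where $M$ is the substochastic tridiagonal averaging matrix of the random walk on $\{1,\dots,n-1\}$ killed at the endpoints, and one must then prove $\|M^k\|_\infty\to 0$ geometrically (the defect drains out through the fixed boundary in roughly $n$ steps) to get a Cauchy sequence in the complete balls. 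This estimate is exactly what is missing from your write-up, and it is exactly what the paper engineers by other means: its induction $P(A)\Rightarrow P(3A/2)$ splits the interval into three equal thirds $a<a_1<b_1<b$ and alternately solves the problem on the overlapping subintervals $[a,b_1]$ and $[a_1,b]$ (each of length $\le A$, where existence is known); because $a_1$ and $b_1$ are the midpoints of those subintervals, convexity gives the clean one-step decay $d(p_n,p_{n+1})\le\frac{1}{2}d(q_n,q_{n-1})$ and $d(q_n,q_{n+1})\le\frac{1}{2}d(p_n,p_{n-1})$, so the Cauchy property is immediate, and the two limit geodesics patch along the common middle third by your uniqueness argument. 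So your architecture can be completed, but only after replacing the hoped-for contraction property with a boundary-damping estimate of this kind; as it stands, the existence step does not close.
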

\begin{proof}
We start by assuming that such a \(\overline{\sigma}\) exists.
First we show uniqueness.
Let \(\sigma_1,\sigma_2:[0,1]\to X\) be local geodesics such that condition (1) holds for both $\sigma_1,\sigma_2$. Then we have that for any such two local geodesics \(\sigma_1,\sigma_2\) satisfying the conditions of part (1), the map \(t\mapsto d(\sigma_1(t),\sigma_2(t))\) is convex. Indeed, first notice that 
$$d(\sigma_1(t),\sigma(t))\leq (1-t)d(\sigma_1(0),\sigma(0))+td(\sigma_1(1),\sigma(1))\leq (1-t)\varepsilon+\varepsilon=\varepsilon,$$ 
for all \(t\in [0,1]\) (and similarly for \(\sigma_2\)).
For all \(t\in [0,1]\) we have \(\sigma_1(t),\sigma_2(t)\in B(\sigma(t),2\varepsilon)\), which is a convex neighborhood of \(\sigma(t)\).
For every \(t\in [0,1]\) there is then a \(\delta_t>0\) such that \(v\mapsto d(\sigma_1(v),\sigma_2(v))\) is convex on \([t-\delta_t,t+\delta_t]\).
In particular, \(t\mapsto d(\sigma_1(t),\sigma_2(t))\) is locally convex, and therefore convex by a general theorem.
Now, by convexity and since \(\sigma_1(0)=\sigma_2(0)=\overline{x}\) and \(\sigma_1(1)=\sigma_2(1)=\overline{y}\) we must then have \(0\leq d(\sigma_1(t),\sigma_2(t))\leq (1-t)d(\sigma_1(0),\sigma_2(0))+td(\sigma_1(1),\sigma_2(1))=0\), so \(\sigma_1=\sigma_2\).
This shows that if a local geodesic satisfying the conditions in part (1) exists, it must necessarily be unique.

\medskip
Next we show that this local geodesic $\overline{\sigma}$ from (1) will satisfy the inequality in part (2).

 By the first part of the proof it is enough to consider local geodesics \(\sigma_1,\sigma_2:[0,1]\to X\) with \(d(\sigma(t),\sigma_1(t))<\varepsilon\) and \(d(\sigma(t),\sigma_2(t))<\varepsilon\) for all \(t\in [0,1]\), but this time we also assume \(\sigma_1(0)=\sigma_2(0)\). Notice 
\(\sigma_1(t)\) and \(\sigma_2(t)\) are both in the convex neighborhood \(B(\sigma(t),2\varepsilon)\) when \(t\in [0,1]\). So again by the convexity proven above \(d(\sigma_1(t),\sigma_2(t))\leq (1-t)d(\sigma_1(0),\sigma_2(0))+td(\sigma_1(1),\sigma_2(1))=td(\sigma_1(1),\sigma_2(1))\).
As \(\sigma_1\) and \(\sigma_2\) are local geodesics, \(\sigma_1|_{[0,t]}\) and \(\sigma_2|_{[0,t]}\) are geodesics for sufficiently small \(t\in (0,1]\).
In particular, \(L(\sigma_1|_{[0,t]})=tL(\sigma_1)\) and \(L(\sigma_2|_{[0,t]})=d(\sigma_2(0),\sigma_2(t))\).
We now have
\begin{align*}
tL(\sigma_2)&=d(\sigma_2(0),\sigma_2(t))=d(\sigma_1(0),\sigma_2(t))\\
            &\leq d(\sigma_1(0),\sigma_1(t))+d(\sigma_1(t),\sigma_2(t))\\
            &\leq tL(\sigma_1)+td(\sigma_1(1),\sigma_2(1)),
\end{align*}
for sufficiently small \(t\in (0,1]\).
Dividing through by \(t\) gives us
\begin{equation}\label{eq:length_ineq}L(\sigma_2)\leq L(\sigma_1)+d(\sigma_1(1),\sigma_2(1)),\end{equation}
for local geodesics \(\sigma_1,\sigma_2\) with \(\sigma_1(0)=\sigma_2(0)\).

Now let \(x,y,\overline{x},\overline{y}\) and \(\overline{\sigma}\) be as in part (1) of this Lemma. Let also \(\overline{\sigma_1}:[0,1]\to X\) be a local geodesic satisfying the condition (1) of this Lemma and with the property that $\overline{\sigma_1}(0)=\overline{\sigma}(0)=\overline{x}$ and $\overline{\sigma_1}(1)=\sigma(1)=y$.

Above we showed that those local geodesics $\overline{\sigma}, \overline{\sigma_1}$ are unique if they exist. Existence will be shown later. Then we apply inequality \eqref{eq:length_ineq} to get 
\begin{equation}\label{eq:length_ineq1}
L(\overline{\sigma})\leq L(\overline{\sigma_1})+d(\overline{\sigma}(1)),\overline{\sigma_1}(1)=L(\overline{\sigma_1})+d(y,\overline{y}).
\end{equation}
We also have \(-\overline{\sigma_1}(0)=-\sigma(0)=y\), so we can again apply inequality \eqref{eq:length_ineq}, which gives us \begin{align}
L(-\overline{\sigma_1})&\leq L(-\sigma)+d(-\overline{\sigma_1}(1),-\sigma(1)) \notag\\
L(\overline{\sigma_1})&\leq L(\sigma)+d(\overline{x},x)\label{eq:length_ineq2}.
\end{align}
Inserting inequality \eqref{eq:length_ineq1} into inequality \eqref{eq:length_ineq2} finally gives us
\[L(\overline{\sigma})\leq L(\sigma)+d(x,\overline{x})+d(y,\overline{y})\]
as desired.

\medskip
All of the above assumes the existence of such a \(\overline{\sigma}\).
We will now show \(\overline{\sigma}\) always exists.
Let \(A>0\), and consider the following statement:
\begin{enumerate}
    \item[\(P(A)\)]
For all \(a,b\in [0,1]\) with \(0<b-a\leq A\) and for all \(\overline{p},\overline{q}\in X\) with \(d(\sigma(a),\overline{p})<\varepsilon\) and \(d(\sigma(b),\overline{q})<\varepsilon\) there exists a local geodesic \(\overline{\sigma}:[a,b]\to X\) such that \(\overline{\sigma}(a)=\overline{p},\overline{\sigma}(b)=\overline{q}\) and such that \(d(\sigma(t),\overline{\sigma}(t))<\varepsilon\) for all \(t\in [a,b]\).
\end{enumerate}

First we notice that $P(A)$  is true for \(A:=\varepsilon/L(\sigma)\). Indeed, we have 
\(d(\sigma(a),\sigma(b))=L(\sigma)(b-a)=L(\sigma|_{[a,b]})\leq\varepsilon\).
For all \(\overline{q}\in X\) with \(d(\sigma(b),\overline{q})<\varepsilon\) we then have by the triangle inequality \(d(\sigma(a),\overline{q})\leq d(\sigma(a),\sigma(b))+d(\sigma(b),\overline{q})<2\varepsilon\).
This shows that all \(\overline{p},\overline{q}\in X\) as in the claim are in the ball \(\overline{B}(\sigma(a),2\varepsilon)\), which by hypothesis is geodesically convex.
Therefore, there exists a local geodesic from \(\overline{p}\) to \(\overline{q}\) which satisfies \(d(\sigma(t),\overline{\sigma}(t))<\varepsilon\) by convexity of \(d\) on \(B(\sigma(a),2\varepsilon)\), proving \(P(\varepsilon/L(\sigma))\).

\medskip
Now we know that $P(A)$ is true for at least one positive value of \(A\). If we can now show that \(P(A)\) implies \(P(3A/2)\), then the set of \(A\)'s for which the statement is true is unbounded, i.e. it is true for all \(A>0\). Indeed, assume $P(A)$ is true for some \(A>0\).
Let \(a,b\in [0,1]\) be such that \(0<b-a<3A/2\).
Next, split the interval \([a,b]\) into three smaller intervals with endpoints \(a<a_1<b_1<b\) of the same length.
Notice that \(b_1-a<A\) and \(b-a_1<A\).
Let \(\overline{p},\overline{q}\in X\) be as in the statement \(P(3A/2)\).
Define \(p_0=\sigma(a_1),q_0=\sigma(b_1)\).
Now we assumed that \(P(A)\) is true, so once we define points \(p_{n-1}\in X\) and \(q_{n-1}\in X\) we know there are local geodesics \(\sigma_n:[a,b_1]\to X\) and local geodesics \(\sigma'_n:[a_1,b]\to X\) joining \(\overline{p}\) to \(q_{n-1}\) and joining \(p_{n-1}\) to \(\overline{q}\), respectively, such that \(d(\sigma(t),\sigma_n(t))<\varepsilon\) for \(t\in [a,b_1]\) and \(d(\sigma(t),\sigma'_n(t))<\varepsilon\) for \(t\in [a_1,b]\).
We then define \(p_n=\sigma_n(a_1)\) and \(q_n=\sigma'_n(b_1)\).
Because we chose \(\varepsilon>0\) to be small enough for the metric to be convex we have 
\begin{align*}
d(p_0,p_1)&=d(\sigma(a_1),\sigma_1(a_1))\\
&\leq \frac{1}{2}d(\sigma(a),\sigma_1(a))+\frac{1}{2}d(\sigma(b_1),\sigma_1(b_1))\\
&<\frac{1}{2}\varepsilon
\end{align*}
and
\begin{align*}
d(p_n,p_{n+1})&=d(\sigma_n(a_1),\sigma_{n+1}(a_1))\\
&\leq\frac{1}{2}d(\sigma_n(a),\sigma_{n+1}(a))+\frac{1}{2}d(\sigma_n(b_1),\sigma_{n+1}(b_1))\\
&=\frac{1}{2}d(q_n,q_{n-1}).
\end{align*}
Similarly we get \(d(q_0,q_1)<\varepsilon/2\) and \(d(q_n,q_{n+1})\leq \frac{1}{2}d(p_n,p_{n-1})\).
Recursive substitution then yields
\begin{align*}
d(p_n,p_{n+1})&<\frac{\varepsilon}{2^{n+1}}\\
d(q_n,q_{n+1})&<\frac{\varepsilon}{2^{n+1}},
\end{align*}
showing that \((p_n)_{n\geq 0}\) and \((q_n)_{n\geq 0}\) are Cauchy sequences and therefore converge to unique points \(p_\infty\) and \(q_\infty\), respectively.

As \(t\mapsto d(\sigma_n(t),\sigma_{n+1}(t))\) is convex, it is bounded from above by \(d(\sigma_n(b_1),\sigma_{n+1}(b_1))<\frac{\varepsilon}{2^{n+1}}\).
This means that \((\sigma_n(t))_{n\geq 0}\) also is a Cauchy sequence in the complete ball \(\overline{B}(\sigma(t),\varepsilon)\) for all \(t\in [a,b_1]\).
In the same way \((\sigma'_n(t))_{n\geq 0}\) is a Cauchy sequence in the complete ball \(\overline{B}(\sigma'(t),\varepsilon)\) for all \(t\in [a_1,b]\).
The local geodesics \(\sigma_n\) and \(\sigma'_n\) therefore converge uniformly to local geodesics \(\sigma_\infty:[a,b_1]\to X\) and \(\sigma'_\infty:[a_1,b]\).
These local geodesics restricted to the interval \([a_1,b_1]\) are two local geodesics between \(p_\infty,q_\infty\).
As \(b_1-a_1<A/2<A\), any local geodesic between these two points must be unique as shown before, so \(\sigma_\infty,\sigma_\infty'\) must coincide on \([a_1,b_1]\).
Now these make up a local geodesic from \(\overline{p}\) to \(\overline{q}\) given by
\[\overline{\sigma}=\begin{cases}
\sigma_\infty(t)   &\text{if }t\in [a,b_1]\\
\sigma'_\infty(t)   &\text{if }t\in[a_1,b]\end{cases}.\]
This finishes the proof.
\end{proof}

\subsection{The exponential map}
This subsection will be devoted to constructing a universal cover (and its corresponding covering map), from which we can deduce parts (2) and (3) of the Cartan--Hadamard Theorem.
\begin{definition}
Let \((X,d)\) be a metric space, and let \(x_0\in X\) be a point.
We then define the topological space \(\twiddle{X}_{x_0}\) as the set of linearly reparametrized local geodesics \(\sigma:[0,1]\to X\) that start at \(\sigma(0)=x_0\) and end at some point \(\sigma(1)\in X\), including the constant map \(x_0:[0,1]\to X\).
We then define a metric $d_s$ on \(\twiddle{X}_{x_0}\) by
\[d_s(\sigma,\sigma')=\sup_{t\in[0,1]}d(\sigma(t),\sigma'(t)).\]
and define \(\exp:\twiddle{X}_{x_0}\to X\) as the map \(c\mapsto \exp(c)=c(1)\).
\end{definition}
The map \(d_s\) is obviously symmetric and positively definite.
By the triangle inequality we also have for \(\sigma_1,\sigma_2,\sigma_3\in\twiddle{X}_{x_0}\):
\begin{align*}
d_s(\sigma_1,\sigma_2)+d_s(\sigma_2,\sigma_3)
&=\sup_{t\in[0,1]}d(\sigma_1(t),\sigma_2(t))+\sup_{t\in[0,1]}d(\sigma_2(t),\sigma_3(t))\\
&\geq\sup_{t\in[0,1]}d(\sigma_1(t),\sigma_3(t))\\
&=d_s(\sigma_1,\sigma_3)
\end{align*}
showing that \((\twiddle{X}_{x_0},d_s)\) is a metric space.
\begin{definition}
   We denote by $\twiddle{d}_s$ the induced length metric on $\twiddle{X}_{x_0}$ associated with $d_s$.
\end{definition}

\begin{remark}
 Notice that if \((X,d)\) is a path-connected metric space with locally complete and locally convex metric \(d\), then by Lemma \ref{lem:induced_convex}, for any two points $x,y \in X$ one can construct a local geodesic in $X$ between $x$ and $y$. Indeed, take a continuous curve $c$ between $x$ and $y$, that exists by the assumptions on $(X,d)$. Then, since $(X,d)$ is locally complete and locally convex, using Lemma \ref{lem:induced_convex} and the continuity of the curve $c$ one can construct step by step a local geodesic between $x$ and $c(t)$, for every $t \in [0,1]$.
\end{remark}

\begin{proposition}\label{prop:twiddle simply connected}
Let \((X,d)\) be a path-connected metric space with locally complete and locally convex metric \(d\). Let $x_0 \in X$.
\parenum
\begin{enumerate}
    \item \(\twiddle{X}_{x_0}\) is contractible, thus simply connected;
    \item \(\exp:\left(\twiddle{X}_{x_0},d_s\right)\to (X,d)\) is a local isometry;
    \item There is a unique local geodesic joining \(\twiddle{x}_0\) to any  point in \(\left(\twiddle{X}_{x_0},d_s\right)\).
\end{enumerate}
\end{proposition}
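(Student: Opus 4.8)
The plan is to handle the three parts in the order (1), (2), (3): contractibility is self-contained, the local-isometry statement rests on Lemma \ref{lem:induced_convex}, and uniqueness of local geodesics will be deduced from the local isometry by a lifting/connectedness argument. Throughout, the one structural fact I keep exploiting is that a restriction of a local geodesic is again a local geodesic, so that ``truncate and linearly reparametrize'' sends $\twiddle{X}_{x_0}$ to itself.

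For (1) I would exhibit an explicit strong deformation retraction onto the constant geodesic $\twiddle{x}_0$. Define $H:\twiddle{X}_{x_0}\times[0,1]\to\twiddle{X}_{x_0}$ by $H(\sigma,s)(t)=\sigma(st)$; then $H(\sigma,s)$ is the linear reparametrization of $\sigma|_{[0,s]}$, hence lies in $\twiddle{X}_{x_0}$, with $H(\sigma,1)=\sigma$, $H(\sigma,0)=\twiddle{x}_0$, and $H(\twiddle{x}_0,s)=\twiddle{x}_0$. The only nontrivial point is joint continuity: for fixed $s$ the map is $1$-Lipschitz in $\sigma$ since $d_s(H(\sigma,s),H(\sigma',s))=\sup_t d(\sigma(st),\sigma'(st))\le d_s(\sigma,\sigma')$, while $s\mapsto H(\sigma,s)$ is $L(\sigma)$-Lipschitz because a linearly reparametrized local geodesic has constant speed $L(\sigma)$; combining the two gives continuity at each point. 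Contractibility then yields simple connectedness.

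For (2) I would fix $\sigma\in\twiddle{X}_{x_0}$, set $y=\sigma(1)$, and choose $\varepsilon>0$ as in Lemma \ref{lem:induced_convex}, so that every $\overline{B}(\sigma(t),2\varepsilon)$ is complete and convex. Applying that lemma with the \emph{fixed} left endpoint $x_0$, each $\overline{y}\in B(y,\varepsilon)$ is the endpoint of a unique local geodesic $\overline{\sigma}\in\twiddle{X}_{x_0}$ staying within $\varepsilon$ of $\sigma$; this gives a two-sided inverse of $\exp$ on $U:=B_{d_s}(\sigma,\varepsilon)$. The heart of the argument is the distance computation: for $\sigma_1,\sigma_2\in U$ both start at $x_0$ and both remain in the convex balls $B(\sigma(t),2\varepsilon)$, so the convexity argument inside Lemma \ref{lem:induced_convex} makes $f(t):=d(\sigma_1(t),\sigma_2(t))$ convex with $f(0)=0$; convexity forces $f(t)\le t f(1)\le f(1)$, so the supremum defining $d_s$ is attained at $t=1$ and $d_s(\sigma_1,\sigma_2)=d(\sigma_1(1),\sigma_2(1))=d(\exp\sigma_1,\exp\sigma_2)$. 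Hence $\exp|_U$ is a bijection onto $B(y,\varepsilon)$ preserving distances, which is exactly the local isometry claim.

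For (3), existence is the curve $\Sigma_\sigma(s)(t)=\sigma(st)$ from $\twiddle{x}_0$ to $\sigma$, i.e. a fibre of the contraction $H$; since $\exp\circ\Sigma_\sigma=\sigma$ is a local geodesic and $\exp$ is a local isometry by (2), the identity $d_s(\Sigma_\sigma(s),\Sigma_\sigma(s'))=d(\sigma(s),\sigma(s'))$ for nearby $s,s'$ shows $\Sigma_\sigma$ is a local geodesic via the earlier characterization of geodesics (a curve is geodesic iff $d(\gamma(t),\gamma(t'))=\lambda|t-t'|$ for a constant $\lambda$). For uniqueness I would prove that any local geodesic $\Sigma$ with $\Sigma(0)=\twiddle{x}_0$ is determined by $c:=\exp\circ\Sigma$, namely $\Sigma(s)(t)=c(st)$: the set $S=\{s:\Sigma(s)=\Sigma_c(s)\}$, where $\Sigma_c(s)(t)=c(st)$, contains $0$, is closed because both paths are $d_s$-continuous, and is open because near any $s_0\in S$ the local isometry $\exp$ is injective on a neighborhood of $\Sigma(s_0)=\Sigma_c(s_0)$ while $\exp\Sigma(s)=c(s)=\exp\Sigma_c(s)$; hence $S=[0,1]$. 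Evaluating at $s=1$ gives $c=\sigma$, so $\Sigma=\Sigma_\sigma$ is the only such local geodesic. The main obstacle is precisely this lifting step: it is where the proof genuinely needs $\exp$ to be a local isometry (part (2)) and not merely a local homeomorphism, and where one must check carefully that the open--closed dichotomy applies to the equality of two continuous $\twiddle{X}_{x_0}$-valued paths.
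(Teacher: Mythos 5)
Your proposal is correct and follows essentially the same route as the paper: the contraction \((\sigma,s)\mapsto(t\mapsto\sigma(st))\) for (1), the convexity bound \(d(\sigma_1(t),\sigma_2(t))\le t\,d(\sigma_1(1),\sigma_2(1))\) coming from Lemma \ref{lem:induced_convex} to identify \(d_s\)-balls with \(d\)-balls under \(\exp\) for (2), and existence plus uniqueness of the lift \(s\mapsto r_s(\sigma)\) for (3). The only difference is that you spell out what the paper leaves implicit (joint continuity of the homotopy, and the open--closed unique-lifting argument behind the paper's asserted bijection \(\twiddle{c}\mapsto\exp\circ\twiddle{c}\)), which adds rigor but is not a different proof.
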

\begin{proof}
\parenum
\begin{enumerate}
    \item Let \(c\in\twiddle{X}_{x_0}\) and let \(s\in[0,1]\).
We then define the path \(r_s(c):[0,1]\to X\) as the map \(t\mapsto c(st)\).
The map \(\twiddle{X}_{x_0}\times [0,1]\to \twiddle{X}_{x_0}\) given by \((c,s)\mapsto r_s(c)\) is then a homotopy from the identity map in \(\twiddle{X}_{x_0}\) to the constant map \(\twiddle{x}_0\), showing that \(\twiddle{X}_{x_0}\) is contractible.
As all contractible spaces are simply connected we are done.
    \item Let \(\sigma\in\twiddle{X}_{x_0}\) be a local geodesic in \(X\) between \(x_0\in X\) and \(\sigma(1)\in X\).
By local completeness and local convexity of the metric there is an \(\varepsilon_\sigma>0\) such that the induced metric on \(B(\sigma(t), 2\varepsilon_\sigma)\) is complete and convex,  for every $t \in [0,1]$.
Now take another local geodesic \(\sigma'\) from \(x_0\) to \(\sigma'(1) \in B(\sigma(1),\varepsilon_\sigma)\).
We want to show that $$d_s(\sigma,\sigma')=d(\exp(\sigma),\exp(\sigma'))=d(\sigma(1),\sigma'(1)),$$ and thus  \( \sigma' \in B(\sigma,\varepsilon_\sigma) \subset \twiddle{X}_{x_0}\).
Indeed, by convexity from Lemma \ref{lem:induced_convex} we have \(d(\sigma(t),\sigma'(t))\leq t d(\sigma(1),\sigma'(1)) \leq d(\sigma(1),\sigma'(1))\), for every $t \in [0,1]$.
This gives us
\[d_s(\sigma,\sigma')=\sup_{t\in [0,1]} d(\sigma(t),\sigma'(t))=d(\sigma(1),\sigma'(1))=d(\exp(\sigma),\exp(\sigma')),\]
showing that $\exp$ restricted to the ball \(B(\sigma,\varepsilon_\sigma)\) is a distance preserving function onto \(B(\sigma(1),\varepsilon_\sigma)\). 
This map is clearly surjective, and injective by Lemma \ref{lem:induced_convex}, so it is a bijection, hence an isometry.
This holds for any \(\sigma\in \twiddle{X}_{x_0}\), so every point in \(\twiddle{X}_{x_0}\) has an open neighborhood which is isometric to an open neighborhood in \(X\), whereby $\exp$ is a local isometry.
    \item The image of a continuous curve \(\twiddle{c}\) in \(\twiddle{X}_{x_0}\) between two local geodesics \(\sigma:[0,1]\to X\) and \(\sigma':[0,1]\to X\) with \(\sigma(0)=\sigma'(0)=x_0\) under \(\exp\) is a continuous curve \(c\) from \(\sigma(1)\in X\) to \(\sigma'(1)\in X\).
    
As \(\exp\) is a local isometry, \(\twiddle{c}\) is a local geodesic if and only if \(c\) is. In particular, one can easily verify that we have a bijection between the set of local geodesics issuing from \(\twiddle{x}_0\) to the set of local geodesics issuing from \(x_0\) given by \(\twiddle{c}\mapsto\exp\circ\twiddle{c}\).
By hypothesis \(d\) is locally complete and locally convex, so we apply Lemma \ref{lem:induced_convex} to show that for each point \(c\in\twiddle{X}_{x_0}\), the image under \(\exp\) of the continuous path \(\twiddle{c}:[0,1]\to \twiddle{X}_{x_0}\) given by \(\twiddle{c}(s)=r_s(c)\) is the unique local geodesic joining \(\twiddle{x}_0\) to \(c\) in \(\twiddle{X}_{x_0}\),  still $c$ might not be the unique local geodesic in $X$ joining \(x_0\) to \(c(1)\) in \(X\).
\end{enumerate}
\end{proof} 

\begin{lemma}\label{lem:d_twiddle_complete}
    Let \((X,d)\) be a metric space with locally convex and complete metric \(d\). 
    Then for any \(x_0\in X\), the metric \(d_s\) on \(\twiddle{X}_{x_0}\) is complete.
\end{lemma}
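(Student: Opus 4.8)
The plan is to take an arbitrary $d_s$-Cauchy sequence $(\sigma_n)_n$ in $\twiddle{X}_{x_0}$ and exhibit a limit lying in $\twiddle{X}_{x_0}$. Since $d_s(\sigma,\sigma')=\sup_{t\in[0,1]}d(\sigma(t),\sigma'(t))$ is the uniform metric, the sequence being $d_s$-Cauchy means exactly that the maps $\sigma_n:[0,1]\to X$ are uniformly Cauchy. For each fixed $t$ the sequence $(\sigma_n(t))_n$ is then Cauchy in $X$, and as $(X,d)$ is complete it converges to a point $\sigma_\infty(t)$; the convergence is in fact uniform, so $\sigma_\infty$ is continuous and $d_s(\sigma_n,\sigma_\infty)\to 0$. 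Moreover $\sigma_\infty(0)=\lim_n\sigma_n(0)=x_0$. The whole problem thus reduces to the single assertion that $\sigma_\infty$ is a linearly reparametrised local geodesic, so that $\sigma_\infty\in\twiddle{X}_{x_0}$.

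To prove this, fix $t_0\in[0,1]$. Using local convexity and completeness of $d$, choose $\eta>0$ so that the closed ball $\overline{B}(\sigma_\infty(t_0),2\eta)$ is geodesically convex, complete, and carries a convex induced metric, exactly as in Lemma \ref{lem:induced_convex}. By continuity of $\sigma_\infty$ pick $\varepsilon>0$ with $\sigma_\infty([t_0-\varepsilon,t_0+\varepsilon])\subseteq B(\sigma_\infty(t_0),\eta/2)$, and then $N$ so that $d_s(\sigma_n,\sigma_\infty)<\eta/2$ for $n\geq N$; for such $n$ the restriction $\sigma_n|_{[t_0-\varepsilon,t_0+\varepsilon]}$ takes values in the convex ball. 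The key sub-step is that a local geodesic lying in a ball with convex metric is automatically a genuine geodesic: if $\tau_n$ denotes the geodesic in the ball joining the two endpoints of $\sigma_n|_{[t_0-\varepsilon,t_0+\varepsilon]}$, then $v\mapsto d(\sigma_n(v),\tau_n(v))$ is locally convex (apply Proposition \ref{prop:conv_mid} on the sub-intervals where $\sigma_n$ is genuinely geodesic), hence convex, and it vanishes at both endpoints, forcing $\sigma_n|_{[t_0-\varepsilon,t_0+\varepsilon]}=\tau_n$. Thus for $n\geq N$ the restriction is a constant-speed geodesic, and since $\sigma_n$ is linearly reparametrised its speed on this interval equals $\lambda_n:=L(\sigma_n)$.

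It then remains to pass to the limit. Fixing one nondegenerate pair $s,s'\in[t_0-\varepsilon,t_0+\varepsilon]$, continuity of $d$ gives $\lambda_n=d(\sigma_n(s),\sigma_n(s'))/|s-s'|\to d(\sigma_\infty(s),\sigma_\infty(s'))/|s-s'|=:\lambda_\infty$, so the speeds converge. Then for all $s,s'$ in the interval
\[
d(\sigma_\infty(s),\sigma_\infty(s'))=\lim_n d(\sigma_n(s),\sigma_n(s'))=\lim_n\lambda_n|s-s'|=\lambda_\infty|s-s'|,
\]
and the characterisation of geodesics by the relation $d(\gamma(t),\gamma(t'))=\lambda|t-t'|$ shows that $\sigma_\infty|_{[t_0-\varepsilon,t_0+\varepsilon]}$ is a geodesic. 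As $t_0$ was arbitrary and $\lambda_\infty=\lim_n L(\sigma_n)$ does not depend on $t_0$, the curve $\sigma_\infty$ is a local geodesic of constant speed, i.e.\ a linearly reparametrised local geodesic starting at $x_0$, so $\sigma_\infty\in\twiddle{X}_{x_0}$. Since $d_s(\sigma_n,\sigma_\infty)\to0$, the Cauchy sequence converges in $\twiddle{X}_{x_0}$, which proves completeness. I expect the main obstacle to be precisely this sub-step, that the uniform limit of local geodesics is again a local geodesic: one must secure, on a single interval with constants independent of $n$, both that each $\sigma_n$ is a true geodesic there and that the speeds $\lambda_n$ converge, and it is exactly here that local convexity together with completeness of $d$ are indispensable.
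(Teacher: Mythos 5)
Your proof is correct, and for most of its length it coincides with the paper's argument: both take the uniform (sup-metric) limit \(\sigma_\infty\) of the Cauchy sequence, localize in a ball around \(\sigma_\infty(t_0)\) on which the induced metric is convex and geodesics exist, and upgrade each \(\sigma_n\) there from a local geodesic to a genuine geodesic by comparing it with the geodesic \(\tau_n\) joining its endpoints inside the ball, using that \(v\mapsto d(\sigma_n(v),\tau_n(v))\) is locally convex, hence convex, and vanishes at the endpoints --- this is exactly the paper's identification \(c_n=\gamma_n\). Where you genuinely diverge is the passage to the limit. The paper introduces the unique geodesic \(\gamma\) of the ball joining the two limit points \(\sigma_\infty(t_0-\varepsilon)\) and \(\sigma_\infty(t_0+\varepsilon)\) and proves \(d(\sigma_\infty(v),\gamma(v))=0\) by one further convexity estimate in which \(n\to\infty\); you instead exploit the constant-speed structure: on the fixed interval each \(\sigma_n\) satisfies \(d(\sigma_n(s),\sigma_n(s'))=\lambda_n|s-s'|\) with \(\lambda_n=L(\sigma_n)\), the speeds converge, so \(d(\sigma_\infty(s),\sigma_\infty(s'))=\lambda_\infty|s-s'|\), and the metric characterization of geodesics from Chapter 1 (the proposition stating that \(\gamma\) is a geodesic if and only if \(d(\gamma(t),\gamma(t'))=\lambda|t-t'|\)) makes \(\sigma_\infty\) a geodesic there. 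Your ending is slightly more economical: it needs only the existence of \(\tau_n\) (not the existence or uniqueness of a geodesic between the limit endpoints) and no limiting convexity estimate, and it has the side benefit of showing explicitly that \(\sigma_\infty\) has the same speed \(\lambda_\infty=\lim_n L(\sigma_n)\) on every local interval, i.e.\ that the limit really is a \emph{linearly reparametrized} local geodesic --- a point the paper leaves implicit.
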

\begin{proof}
    Let \((c_n)\) be a Cauchy sequence in \(\twiddle{X}_{x_0}\).
By definition for  any there is \(\varepsilon>0\) an \(N_\varepsilon\in\N\) such that
\[d_s(c_n,c_m)=\sup\{d(c_n(t),c_m(t)):t\in [0,1]\} < \varepsilon \quad\text{ for all }n,m\geq N_\varepsilon.\]
For a given \(\varepsilon>0\) we then have \(d(c_n(t),c_m(t))<\varepsilon\) for all \(t\in[0,1]\) and for all \(n,m\geq N_\varepsilon\).
In particular, the sequence \((c_n(t))_n\) is Cauchy in \(X\) for all \(t\in [0,1]\).
As \(d\) is assumed complete, \((c_n(t))_n\) converges to some point \(c(t)\in X\).
To show that \(\twiddle{X}_{x_0}\) is complete we just need to show that \(c:[0,1]\to X\), where \(c(0)=c_n(0)=x_0\), is a local geodesic.
As all the \(c_n\) are continuous, so is \(c\).
It remains to show that \(c\) is a local geodesic.
Let \(t\in [0,1]\).
Because \(X\) is locally convex, we can by definition find an open ball \(B(c(t),2\varepsilon)\subseteq X\) where \(d|_{B(c(t),2\varepsilon)}\) is a convex metric.
As \(c\) is continuous, we can find some sufficiently small interval \([t-\delta,t+\delta]\) such that
\begin{itemize}
    \item \(c(v)\in B(c(t),\varepsilon)\) for any \(v\in [t-\delta,t+\delta]\),
    \item we can find an \(N>0\) such that \(c_n(v)\in B(c(v),\varepsilon)\) for all \(n\geq N_\varepsilon\) and all \(v\in [t-\delta,t+\delta]\).
\end{itemize}
By the triangle inequality we then have \(c_n(v)\in B(c(t),2\varepsilon)\) for all \(v\in [t-\delta,t+\delta]\).
Now the restriction \(c_n:[t-\delta,t+\delta]\to X\), which is a local geodesic, is entirely contained in \(B(c(t),2\varepsilon)\).
Now \((B(c(t),2\varepsilon),d)\) is convex and geodesic, so there is a geodesic \(\gamma_n\) from \(c_n(t-\delta)\) to \(c_n(t+\delta)\) entirely contained in \(B(c(t),2\varepsilon)\).
By convexity \(\gamma_n\) is unique.
Now the function \(v\mapsto d(c_n(v),\gamma_n(v))\), where \(v\in [t-\delta,t+\delta]\), is locally convex, and therefore convex.
This in turn means that the restriction \(c_n:[t-\delta,t+\delta]\to X\) is a geodesic,  $c_n=\gamma_n$ on $[t-\delta,t+\delta]$.
Between the points \(c(t-\delta)\) and \(c(t+\delta)\) there is a unique geodesic \(\gamma:[t-\delta,t+\delta]\to X\) which is entirely contained in \(B(c(t),2\varepsilon)\) by choice of \(\varepsilon\).
We claim that this geodesic \textit{is} \(c\) restricted to the interval, i.e. \(\gamma(v)=c(v)\) for \(v\in [t-\delta,t+\delta]\), which in turn will show that \(c\) is a local geodesic.
Indeed \(d(c(v),\gamma(v))\leq d(c_n(v),c(v))+d(c_n(v),\gamma(v))\) for all \(n\geq N_\varepsilon\) by the triangle inequality.
As \(c_n,\gamma:[t-\delta,t+\delta]\to X\) are geodesics we furthermore get by convexity
\[d(c(v),\gamma(v))\leq d(c_n(v),c(v))+vd(c_n(t-\delta),\gamma(t-\delta))+(1-v)d(c_n(t+\delta),\gamma(t+\delta)).\]
Letting \(n\) go to infinity the right hand side goes to 0, so \(d(c(v),\gamma(v))=0\) and therefore \(c(v)=\gamma(v)\) for all \(v\in [t-\delta,t+\delta]\), finishing the proof.
\end{proof}

\begin{corollary}\label{cor:unique_geo}
    Let \((X,d)\) be a path-connected length space with complete and locally convex metric \(d\). Let \(x_0\in X\) be a point.
\parenum
\begin{enumerate}
    \item \(\exp:(\twiddle{X}_{x_0},\twiddle{d}_s )\to (X,d)\) is a universal covering map;
    \item There is a unique local geodesic joining each pair of points in \(\left(\twiddle{X}_{x_0},\twiddle{d}_s\right)\).
\end{enumerate}
\end{corollary}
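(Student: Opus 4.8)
The plan is to bootstrap everything from the basepoint construction already carried out for $x_0$, exploiting that $\exp$ is a local isometry whose source is now complete.

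First I would promote the local isometry from $d_s$ to $\twiddle{d}_s$ and record completeness. By Proposition \ref{prop:twiddle simply connected} the map $\exp\colon(\twiddle{X}_{x_0},d_s)\to(X,d)$ is a local isometry, so it preserves curve lengths; since $(X,d)$ is a length space and is locally convex, on every sufficiently small geodesically convex ball the length metric induced by $d_s$ coincides with $d_s$ itself. Hence $\exp\colon(\twiddle{X}_{x_0},\twiddle{d}_s)\to(X,d)$ is again a local isometry and $\twiddle{d}_s=d_s$ locally. As $\twiddle{d}_s\geq d_s$ pointwise, any $\twiddle{d}_s$-Cauchy sequence is $d_s$-Cauchy and therefore converges in $(\twiddle{X}_{x_0},d_s)$ by Lemma \ref{lem:d_twiddle_complete}; the local coincidence of the two metrics near the limit upgrades this to $\twiddle{d}_s$-convergence, so $(\twiddle{X}_{x_0},\twiddle{d}_s)$ is a complete length space.

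For part (1) I would prove that this local isometry is a covering map. The engine is path lifting: given $\sigma\in\twiddle{X}_{x_0}$ and a curve $c\colon[0,1]\to X$ with $c(0)=\exp(\sigma)=\sigma(1)$, the step-by-step deformation of local geodesics from $x_0$ to $c(t)$ described in the remark preceding Proposition \ref{prop:twiddle simply connected} (built from Lemma \ref{lem:induced_convex}) yields a continuous lift $\twiddle{c}\colon[0,1]\to\twiddle{X}_{x_0}$ with $\exp\circ\twiddle{c}=c$ and $\twiddle{c}(0)=\sigma$; local injectivity of $\exp$ together with the uniqueness clause of Lemma \ref{lem:induced_convex} makes the lift unique, and completeness guarantees that it never escapes and is defined on all of $[0,1]$. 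Surjectivity of $\exp$ follows by lifting curves out of $x_0$, using that $X$ is path-connected. To exhibit evenly covered neighbourhoods I would fix $x\in X$, take a geodesically convex ball $B(x,\varepsilon)$ on which $d$ is convex, and for each $\sigma\in\exp^{-1}(x)$ lift the radial geodesics of $B(x,\varepsilon)$ starting at $\sigma$; unique lifting shows that $\exp^{-1}\bigl(B(x,\varepsilon)\bigr)$ is the disjoint union of the resulting sheets, each carried isometrically onto $B(x,\varepsilon)$. Thus $\exp$ is a covering map, and since $\twiddle{X}_{x_0}$ is simply connected by Proposition \ref{prop:twiddle simply connected}, it is the universal covering, which is (1).

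For part (2) I would apply part (1) a second time, to the space $Y:=(\twiddle{X}_{x_0},\twiddle{d}_s)$ with a fresh basepoint. By the first paragraph $Y$ is a complete length space; it is path-connected (indeed contractible, by Proposition \ref{prop:twiddle simply connected}), and its metric is locally convex because $\exp$ is a local isometry onto the locally convex $(X,d)$, so $Y$ satisfies the hypotheses of the corollary. Fixing an arbitrary $\twiddle{y}\in Y$ and forming $\twiddle{Y}_{\twiddle{y}}$ with its induced length metric, part (1) gives a universal covering map $\exp_Y\colon\twiddle{Y}_{\twiddle{y}}\to Y$; because $Y$ is simply connected this $\exp_Y$ is a bijective local isometry, hence an isometry. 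Proposition \ref{prop:twiddle simply connected} supplies a unique local geodesic from the basepoint of $\twiddle{Y}_{\twiddle{y}}$ to every point, and transporting these through the isometry $\exp_Y$ yields a unique local geodesic from $\twiddle{y}$ to every point of $\twiddle{X}_{x_0}$. Since $\twiddle{y}$ was arbitrary, there is a unique local geodesic between any two points, establishing (2).

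The main obstacle is the covering-map step in part (1). The delicate point is that the radius $\varepsilon_\sigma$ on which $\exp$ is an isometry, furnished by Proposition \ref{prop:twiddle simply connected}, depends on the entire lifted path $\sigma$ and not merely on $\exp(\sigma)$, so it cannot be chosen uniformly over a fibre; completeness of $(\twiddle{X}_{x_0},\twiddle{d}_s)$ is precisely what compensates, forcing every lift to extend across all of $[0,1]$ and every sheet to be complete. Checking that the sheets are genuinely disjoint and exhaust $\exp^{-1}\bigl(B(x,\varepsilon)\bigr)$ — that is, the uniqueness of path lifts — is the step demanding the most care.
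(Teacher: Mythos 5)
Your proposal is correct, but it reaches both conclusions by a genuinely different route than the paper. For part (1), the paper does not prove the covering property by hand: it verifies a list of hypotheses (local agreement of \(d_s\) and \(\twiddle{d}_s\), completeness of \(\twiddle{d}_s\) via Lemma \ref{lem:d_twiddle_complete}, \(\exp\) a local isometry, local unique geodesicity and continuous dependence of geodesics on endpoints from Lemma \ref{lem:induced_convex}) and then invokes Proposition 3.28 of Chapter I.3 in \cite{BridHaef} as a black box; your path-lifting and evenly-covered-sheets argument is in essence a re-proof of that cited proposition. This buys self-containedness, at the price that the step you yourself flag as delicate --- that the sheets over a convex ball are open, disjoint, and exhaust the preimage --- is precisely the content of the citation and remains a sketch in your write-up. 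For part (2), the paper argues through homotopy classes: choosing the basepoint at an endpoint, it shows that every homotopy class of paths in \(X\) contains a unique local geodesic, then uses simple connectedness of \(\twiddle{X}_{x_0}\) and the bijection between path classes upstairs and downstairs. Your bootstrap --- apply part (1) a second time to \(Y=(\twiddle{X}_{x_0},\twiddle{d}_s)\) with a fresh basepoint \(\twiddle{y}\), observe that a universal covering of a simply connected space is a bijective local isometry between length spaces and hence an isometry, and transport the unique local geodesic furnished by Proposition \ref{prop:twiddle simply connected}(3) --- is clean and avoids the homotopy bookkeeping entirely. Two small points should be made explicit for this to be airtight: \(Y\) must be checked to satisfy the hypotheses of the corollary and of Proposition \ref{prop:twiddle simply connected} (it is a length space by construction, complete by your first paragraph, and locally convex because \(\exp\) is a local isometry; this last transfer needs the easy observation that inside a geodesically convex neighborhood with convex metric, sufficiently small balls are again geodesically convex with convex metric, so that the convex neighborhoods of \(X\) can be shrunk to fit inside the isometric images of neighborhoods of \(Y\)); and the identification of basepoints \(\exp_Y(\twiddle{y}_0)=\twiddle{y}\), which is immediate. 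Neither is a gap of substance, so your argument stands as a valid alternative.
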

\begin{proof}
\parenum
\begin{enumerate}
    \item First we note the following:
\begin{itemize}
    \item  The metrics $d_s$ and $\twiddle{d}_s$ on $\twiddle{X}_{x_0}$ locally agree on $\twiddle{X}_{x_0}$.
    \item \(X\) is assumed connected.
    \item $\exp$ is a local isometry.
    In particular, the map $\exp$ is a local homeomorphism where the length of every path in \(\twiddle{X}_{x_0}\) is no larger than the length of its image under $\exp$.
    \item \(X\) is locally uniquely geodesic by Lemma \ref{lem:induced_convex}, which also tells us that geodesics vary continuously with their endpoints locally (by convexity). 
    \item By Lemma \ref{lem:d_twiddle_complete} we know that the metric \(d_s\) on \(\twiddle{X}_{x_0}\) is complete; thus also $\twiddle{d}_s$.
    \item  By definition and construction  $(\twiddle{X}_{x_0},\twiddle{d}_s)$ is a length space. 
\end{itemize}
We can thus apply Proposition 3.28 from Chapter I.3 in \cite{BridHaef}, showing that $\exp$ is a covering map.
To show that \(\twiddle{X}_{x_0}\) is indeed a universal cover we just need to show that it is simply connected.
This is just Proposition \ref{prop:twiddle simply connected}.
    \item First we show that any continuous curve \(\sigma\) in \(X\) between two given points is homotopic to a unique local geodesic between said points.
As \(x_0\) is chosen arbitrarily, we can chose $x_0$ to be one of the two endpoints of \(\sigma\).
Now \(\twiddle{X}_{x_0}\) is a universal covering, so there is a bijection between the set of continuous paths homotopic to \(\sigma\) (with the same endpoints) and the set of continuous paths in \(\twiddle{X}_{x_0}\) from \(\twiddle{x}_0\) and having the same endpoint as the lift in $\twiddle{X}_{x_0}$ of the curve \(\sigma\) (apply again Lemma \ref{lem:induced_convex}).
By Proposition \ref{prop:twiddle simply connected} there is a unique local geodesic in \(\twiddle{X}_{x_0}\) with those endpoints (the uniqueness come again from the construction in Lemma \ref{lem:induced_convex}). So by Proposition \ref{prop:twiddle simply connected}(3),  the set of continuous paths in \(\twiddle{X}_{x_0}\) from \(\twiddle{x}_0\) and having the same endpoint as the lift in $\twiddle{X}_{x_0}$ of the curve $\sigma$ contains a unique local geodesic.

Next, let \(p,q\in\twiddle{X}_{x_0}\) be local geodesics.
We know that \(\twiddle{X}_{x_0}\) is simply connected, so it has only one homotopy class of curves with these endpoints.
This in turn means that the projection on \(X\) by the map $\exp$ will send that unique class of homopotic curves bijectively onto one single homotopy class of curves in $X$ between the corresponding endpoints.
We have showed above that a homotopy class of curves in $X$ contains a unique local geodesic \(\overline{\sigma}:[0,1]\to X\), so the corresponding homotopy class in \(\twiddle{X}_{x_0}\) must also contain a unique local geodesic, the lifting of \(\overline{\sigma}\).
This is then the unique geodesic between \(p\) and \(q\).
\end{enumerate}
\end{proof}

 We have now shown that \(\twiddle{X}_{x_0}\) is the universal cover of \(X\) with covering map \(\exp:\twiddle{X}_{x_0}\to X\), and that $\twiddle{X}_{x_0}$ is uniquely locally geodesic. Since \((\twiddle{X}_{x_0}, \twiddle{d}_s)\) is a length space as well, by Proposition \ref{prop:p_loc_iso} we have that \((\twiddle{X}_{x_0}, \twiddle{d}_s)\) is the same space as $(\twiddle{X}, \twiddle{d})$ introduced in Definition \ref{def::ind_metric_univ}, and the corresponding covering map $p$ is the $\exp$ map. This is exactly part (3) of the Cartan--Hadamard Theorem (Theorem \ref{thm:CHT}).

It remains to show parts (2) and (3) of the Cartan--Hadamard Theorem, i.e. that for a metric space \((X,d)\), if the metric \(d\) is locally convex, then the metric \(\twiddle{d}_s=\twiddle{d}\) on the universal covering \((\twiddle{X}_{x_0},\twiddle{d}_s) \cong (\twiddle{X},\twiddle{d})\) is globally convex and uniquely geodesic.
\begin{proof}[Proof of parts (2) and (3) of Theorem \ref{thm:CHT}]
We have shown that \(\twiddle{X}\) is a complete uniquely local geodesic space in which  local geodesics vary continuously with their endpoints.
Let \(p,q\in \twiddle{X}\) be points and denote the unique local geodesic joining \(p\) and \(q\) by \(\sigma_{p,q}:[0,1]\to \twiddle{X}\).  Then one can apply the easy argument from the proof of \cite[Lemma 4.8(1)]{BridHaef} to show that in fact $\sigma_{p,q}$ is a geodesic, thus also unique. So part (3) of the Cartan--Hadamard Theorem follows.

Now we show global convexity of $\twiddle{d}$.
If we can show \(\twiddle{d}(\sigma_{p,q_0}(1/2),\sigma_{p,q_1}(1/2))\leq (1/2)\twiddle{d}(q_0,q_1)\) for each pair \(\sigma_{p,q_0},\sigma_{p,q_1}:[0,1]\to \twiddle{X}\), we are done.
Then from there, we can prove by induction that \(\twiddle{d}(\sigma_{p,q_0}(t),\sigma_{p,q_1}(t))\) for all dyadic rationals in \([0,1]\). 
As geodesics vary continuously with their endpoints, this extends to all \(t\in [0,1]\).

To this end, suppose that \(q_1,q_2 \in \twiddle{X} \) are connected by the geodesic \(s\mapsto q_s\).
By Lemma \ref{lem:induced_convex} part (1) we then know that \(\twiddle{d}(\sigma_{p,q_s}(1/2),\sigma_{p,q_{s'}}(1/2))\leq (1/2)\twiddle{d}(q_s,q_{s'})\) whenever \(s,s'\) are sufficiently close.
Now we partition \([0,1]\) to \(0=s_0<s_1<\ldots<s_n=1\) finely enough that the above inequality holds for all \(\{s,s'\}=\{s_i,s_{i+1}\}\).
Now we just add all these inequalities to get the desired \(\twiddle{d}(\sigma_{p,q_0}(1/2),\sigma_{p,q_1}))\leq (1/2)\twiddle{d}(q_0,q_1)\).
\end{proof}

\subsection{Alexandrov's Patchwork}
For the final part of the proof of Theorem \ref{thm:CHT} we will make use of the characterization of CAT(0) spaces in terms of angles.
We will first show that ``gluing'' two geodesic triangles whose angles are no greater than their comparison angles yields a larger triangle with the same property:
\begin{lemma}[Gluing Lemma]\label{lem:gluing}
    Let \(X\) be a geodesic metric space, and let \(\Delta\) be a geodesic triangle with distinct vertices \(p,q_1\) and \(q_2\) in \(X\).
    For \(r\in[q_1,q_2]\), let \(\Delta_1,\Delta_2\) be the geodesic triangles with vertices \(p,q_1,r\) and \(p,r,q_2\), respectively.
    If the vertex angles of the \(\Delta_i\) are no greater than the corresponding vertex angles in their comparison triangles, then the same is true for \(\Delta\).
\end{lemma}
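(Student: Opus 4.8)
The plan is to compare the three vertex angles of $\Delta$ with those of its comparison triangle one vertex at a time, using Alexandrov's Lemma (\ref{lem:alex}) as the main engine and feeding it a single crucial inequality at the splitting point $r$. Fix a comparison triangle $\overline{\Delta}=\overline{\Delta}(\overline p,\overline q_1,\overline q_2)$ for $\Delta$ and let $\overline r\in[\overline q_1,\overline q_2]$ be the comparison point of $r$; since $r\in[q_1,q_2]$ we have $d(q_1,q_2)=d(q_1,r)+d(r,q_2)$, so $\overline r$ genuinely lies on the segment $[\overline q_1,\overline q_2]$. Write $\overline{\Delta}_1,\overline{\Delta}_2$ for comparison triangles of $\Delta_1,\Delta_2$, denote by $\overline\angle$ the comparison angles and by $\angle$ the Alexandrov angles.

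The crucial input concerns the angles at $r$. Because $r$ lies on the geodesic $[q_1,q_2]$, the two sub-geodesics $[r,q_1]$ and $[r,q_2]$ have opposite germs at $r$, so the Alexandrov angle $\angle_r(q_1,q_2)$ equals $\pi$. The triangle inequality for Alexandrov angles (Proposition \ref{prop:pseudo}) then gives $\pi=\angle_r(q_1,q_2)\le\angle_r(q_1,p)+\angle_r(p,q_2)$, and the hypothesis on $\Delta_1,\Delta_2$ bounds each summand by its comparison angle. Hence $\overline\angle_r(\Delta_1)+\overline\angle_r(\Delta_2)\ge\pi$; this is precisely the condition that makes the $z=r$ side of Alexandrov's Lemma non-negative.

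For the base vertices I would apply Alexandrov's Lemma (\ref{lem:alex}) with $x=q_1$, $z=r$, $y=q_2$ and external point $p$. Since the expression at $r$ is $\ge 0$, the sign statement forces $\overline\angle_{q_1}(\Delta)-\overline\angle_{q_1}(\Delta_1)\ge 0$, i.e. the comparison angle of $\Delta$ at $q_1$ dominates that of $\Delta_1$. On the other hand the germs of $[q_1,r]$ and $[q_1,q_2]$ at $q_1$ coincide, so the Alexandrov angles satisfy $\angle_{q_1}(\Delta)=\angle_{q_1}(\Delta_1)$; chaining with the hypothesis gives $\angle_{q_1}(\Delta)=\angle_{q_1}(\Delta_1)\le\overline\angle_{q_1}(\Delta_1)\le\overline\angle_{q_1}(\Delta)$. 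The same argument with $q_1$ and $q_2$ interchanged handles the vertex $q_2$.

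The apex $p$ is the main obstacle, and here the bare sign form of Alexandrov's Lemma is not enough: an angle-sum count in $\overline\Delta$ shows the desired quantity equals $(\overline\angle_r(\Delta_1)+\overline\angle_r(\Delta_2)-\pi)-(\overline\angle_{q_1}(\Delta)-\overline\angle_{q_1}(\Delta_1))-(\overline\angle_{q_2}(\Delta)-\overline\angle_{q_2}(\Delta_2))$, a difference of non-negative terms whose sign is not determined. What is needed is the geometric ``unfolding'' content of Alexandrov's Lemma: glue $\overline\Delta_1$ and $\overline\Delta_2$ along the common edge $[\overline p,\overline r]$ with $\overline q_1,\overline q_2$ on opposite sides, then straighten the configuration so that $\overline r$ moves onto the segment joining the two outer vertices; because $d(q_1,q_2)=d(q_1,r)+d(r,q_2)$ this straightened triangle is exactly $\overline\Delta$, and since the angle sum at $\overline r$ was $\ge\pi$ the straightening does not decrease the apex angle, giving $\overline\angle_p(\Delta)\ge\overline\angle_p(\Delta_1)+\overline\angle_p(\Delta_2)$. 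Finally the triangle inequality for Alexandrov angles together with the hypothesis yields $\angle_p(\Delta)\le\angle_p(q_1,r)+\angle_p(r,q_2)\le\overline\angle_p(\Delta_1)+\overline\angle_p(\Delta_2)\le\overline\angle_p(\Delta)$, completing the verification at all three vertices. I would either invoke this unfolding form directly or re-derive it from the planar law of cosines as a short supplement to the sign version established above.
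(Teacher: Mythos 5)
Your proof is correct and takes essentially the same route as the paper's: the pseudometric triangle inequality (Proposition \ref{prop:pseudo}) at \(r\) gives \(\overline{\angle}_r(\Delta_1)+\overline{\angle}_r(\Delta_2)\geq\pi\), Alexandrov's Lemma then settles the base vertices, and the glued/straightened configuration with the planar law of cosines gives \(\overline{\angle}_p(\Delta)\geq\overline{\angle}_p(\Delta_1)+\overline{\angle}_p(\Delta_2)\), after which the hypothesis and the angle triangle inequality close the argument at \(p\). If anything you are more explicit than the paper at two points it glosses over, namely the germ identity \(\angle_{q_1}(\Delta)=\angle_{q_1}(\Delta_1)\) needed at the base vertices, and the fact that the sign-only statement of Lemma \ref{lem:alex} must be supplemented by the unfolding (law of cosines) content to handle the apex.
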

\begin{proof}
    Let \(\overline{\Delta}_1=\overline{\Delta}_1(\overline{p},\overline{q}_1,\overline{r})\) and \(\overline{\Delta}_2=\overline{\Delta}_2(\overline{p},\overline{r},\overline{q}_2)\) be comparison triangles in \(\E^2\) for \(\Delta_1\) and \(\Delta_2\) arranged such that they share the common side \([\overline{p},\overline{r}]\) with \(\overline{q}_1\) and \(\overline{q}_2\) on opposite sides.
    As the Alexandrov angle is a pseudometric, the sum of the angles of the \(\Delta_i\) at \(r\) is at least \(\angle_r(q_1,q_2)=\pi\). 
    By assumption the same is then true for the sum of the comparison angles.
    Alexandrov's Lemma (Lemma \ref{lem:alex}) now tells us that \(\overline{\angle}_{q_1}(p,q_2)\geq \overline{\angle}_{q_1}(p,r)\), which shows that the angle at \(q_1\) in \(\Delta\) is no greater than its comparison angle.
    A similar argument shows the same for \(q_2\).
    For the angle at \(p\), notice that Alexandrov's Lemma implies that the distances between the vertices corresponding to \(q_1\) and \(q_2\) in the comparison triangle of \(\Delta\) is greater than or equal to \(d(\overline{q_1},\overline{q_2})\), which furthermore implies
\begin{align*}
    \overline{\angle}_p(q_1,q_2)&\geq \overline{\angle}_p(q_1,r)+\overline{\angle}_p(r,q_2)\\
    &\geq \angle_p(q_1,r)+\angle_p(r,q_2)\tag{by assumption}\\
    &\geq \angle_p(q_1,q_2),\tag{by the triangle inequality}
\end{align*}
finishing the proof.
\end{proof}

In locally CAT(0) spaces one can imagine cutting any geodesic triangle into a collection of suitably small geodesic triangles and then repeatedly applying the Gluing Lemma (Lemma \ref{lem:gluing}) to show that the original triangle satisfies this angle/comparison angle inequality.
This is made precise in the following lemma:
\begin{lemma}[Alexandrov's Patchwork]
    Let \((X,d)\) be a geodesic metric space which is locally CAT(0) as in Definition \ref{def:loc_cat}.
    Let \(q_1,q_2\in X\) be points connected by the geodesic \(q:[0,1]\to X\) starting at \(q_1\) and ending at \(q_2\), and let \(p\in X\) be a point not on the image of \(q\).
    For \(s\in [0,1]\), let \(c_s:[0,1]\to X\) be a geodesic from \(p\) to \(q(s)\), and assume that \(s\mapsto c_s\) is continuous.
    Then the angles at \(p,q_0,q_1\) in the geodesic triangle \(\Delta\) with sides \(c_0([0,1]),c_1([0,1]),q([0,1])\) are no greater than the corresponding angles in the comparison triangle of \(\Delta\).
\end{lemma}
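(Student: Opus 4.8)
The plan is to carry out the heuristic quoted above: cut $\Delta$ into geodesic triangles small enough to lie inside single CAT(0) balls, apply Proposition \ref{prop:equiv}(4) to each, and reassemble $\Delta$ by repeated use of the Gluing Lemma (Lemma \ref{lem:gluing}). To fix a uniform scale, note that $(s,t)\mapsto c_s(t)$ is continuous on the compact square $[0,1]^2$ (joint continuity follows from the continuity of $s\mapsto c_s$ and of each $c_s$), so its image $F$ is compact. Since $X$ is locally CAT(0) (Definition \ref{def:loc_cat}), every point of $F$ lies in a CAT(0) ball, and a Lebesgue-number argument yields a single $\delta>0$ such that any subset of $F$ of diameter less than $\delta$ lies in one CAT(0) ball; because the metric of a CAT(0) space is convex (Proposition \ref{prop:conv_met}), the geodesic between any two points of such a subset also stays inside that ball, so all the auxiliary geodesics drawn below remain in CAT(0) balls.

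Using uniform continuity I would then choose partitions $0=s_0<\dots<s_n=1$ and $0=t_0<\dots<t_m=1$ so fine that the image of every rectangle $[s_{i-1},s_i]\times[t_{j-1},t_j]$ has diameter less than $\delta$. Writing $A^i_j=c_{s_i}(t_j)$ for the grid points (so that $A^i_0=p$ and $A^i_m=q(s_i)$), the radial edges $c_{s_i}|_{[t_{j-1},t_j]}$ are genuine geodesic subsegments; inside the relevant CAT(0) ball I join $A^{i-1}_j$ to $A^i_j$ by its unique transversal geodesic and split each small quadrilateral by a diagonal geodesic. Every small triangle produced this way lies in a CAT(0) ball, so by Proposition \ref{prop:equiv}(4) its vertex angles are at most the corresponding comparison angles.

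The reassembly proceeds in two stages. The outer stage is immediate: the fan triangles $T_i=\Delta(p,q(s_{i-1}),q(s_i))$ share the apex $p$ and have their third vertices $q(s_i)$ collinear on the geodesic $q$, so the Gluing Lemma merges $T_1,\dots,T_n$ into $\Delta$ once each $T_i$ is known to satisfy the angle comparison. For fixed $i$ I obtain this by induction on the radial level, growing the truncated triangle $R_j=\Delta(p,A^{i-1}_j,A^i_j)$ from $R_{j-1}$ and the two small triangles of band $j$ by two applications of the Gluing Lemma: first glue $R_{j-1}$ to the band triangle $\Delta(A^{i-1}_{j-1},A^i_{j-1},A^i_j)$ with common apex $A^{i-1}_{j-1}$, the collinearity hypothesis being met because $p,A^i_{j-1},A^i_j$ all lie on $c_{s_i}$; then glue the result to the remaining band triangle with common apex $A^i_j$, now using that $p,A^{i-1}_{j-1},A^{i-1}_j$ lie on $c_{s_{i-1}}$. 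Uniqueness of the diagonal geodesic inside the band's CAT(0) ball guarantees that the two pieces share exactly the relevant edge, so every intermediate figure is a genuine geodesic triangle and both inputs to each merge already satisfy the comparison inequality.

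The main obstacle is keeping the Gluing Lemma applicable throughout: it combines only two triangles sharing an edge issuing from a common vertex whose two remaining vertices are collinear, so the whole argument hinges on routing every merge through the grid points $A^i_j$ and exploiting that points on a common radial geodesic $c_{s_i}$ are automatically collinear. The remaining verifications---joint continuity and uniform smallness, the fact (via convexity of the metric) that the auxiliary transversal and diagonal geodesics stay in their CAT(0) balls, and that the intermediate figures really are the asserted geodesic triangles---are routine, and once they are in place the repeated gluing propagates the angle/comparison-angle inequality from the small triangles up through each $T_i$ and finally to $\Delta$, which is the claim.
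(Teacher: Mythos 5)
Your proof follows essentially the same strategy as the paper's: subdivide $\Delta$ by the grid points $c_{s_i}(t_j)$ into small geodesic triangles lying in CAT(0) balls (via joint continuity and a Lebesgue-number argument), verify the angle/comparison-angle inequality on each by Proposition \ref{prop:equiv}, and then propagate it upward by repeated application of the Gluing Lemma, first radially within each band and then across the fan triangles sharing the apex $p$. If anything, your write-up is more careful than the paper's, which leaves the uniform choice of ball size, the containment of the auxiliary transversal and diagonal geodesics in those balls, and the precise order of gluings (``the zig-zag pattern'') implicit.
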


\begin{proof}
    By hypothesis the map \(c:[0,1]\times[0,1]\mapsto X\) mapping \((s,t)\) to \(c_s(t)\) is continuous, so we can create partitions \(0<s_0<s_1<\cdots<s_k=1\) and \(0<t_0<t_1<\cdots<t_k=1\) making \(c([s_{i-1},s_i]\times [t_{j-1},t_j])\) arbitrarily small for all \(0<i,j\leq k\).
    By hypothesis \(X\) is locally CAT(0), so we can then create these partitions such that all these \(c([s_{i-1},s_i]\times [t_{j-1},t_j])\) are contained in an open ball \(U_{ij}\) which is CAT(0).
    Construct a sequence of \(k\) geodesic triangles \(\Delta_1,\ldots,\Delta_k\) by letting \(\Delta_i\) be the geodesic triangle with sides \(c_{s_{i-1}}([0,1]),c_{s_i}([0,1])\) and \(q([s_{i-1},s_i])\).
    Next we cut each of these triangles into smaller triangles as follows:
    For each \(i\), let \(\Delta_i^1\) be the geodesic triangle in \(U_{1,1}\) with vertices \(p,c_{s_{i-j}}(t_1),c_{s_i}(t_1)\).
    For each \(i\), let \(\Delta_i^j\) be the geodesic triangle in \(U_{i,j}\) with vertices \(c_{s_{i-1}}(t_{j-1}),c_{s_i}(t_{j-1})\) and \(c_{s_{i-1}}(t_j)\).
    Finally, for each \(i\), let \(\twiddle{\Delta}_i^j\) be the geodesic triangle with vertices \(c_{s_{i-1}}(t_j),c_{s_i}(t_j)\) and \(c_{s_i}(t_{j-1})\).
    All of these are contained in a CAT(0) space, so their angles are no greater than their comparison angles.
    Repeatedly applying the gluing lemma to the zig-zag pattern created inbetween all the \(c_{s_i}\) shows that the \(\Delta_1,\ldots,\Delta_k\) satisfy the angle-comparison angle inequality.
    As these triangles patched together make up the whole geodesic triangle \(\Delta\), another repeated usage of the gluing lemma finishes the proof.
\end{proof}
\begin{figure}[H]
    \centering
    \includegraphics[width=\textwidth*2/3]{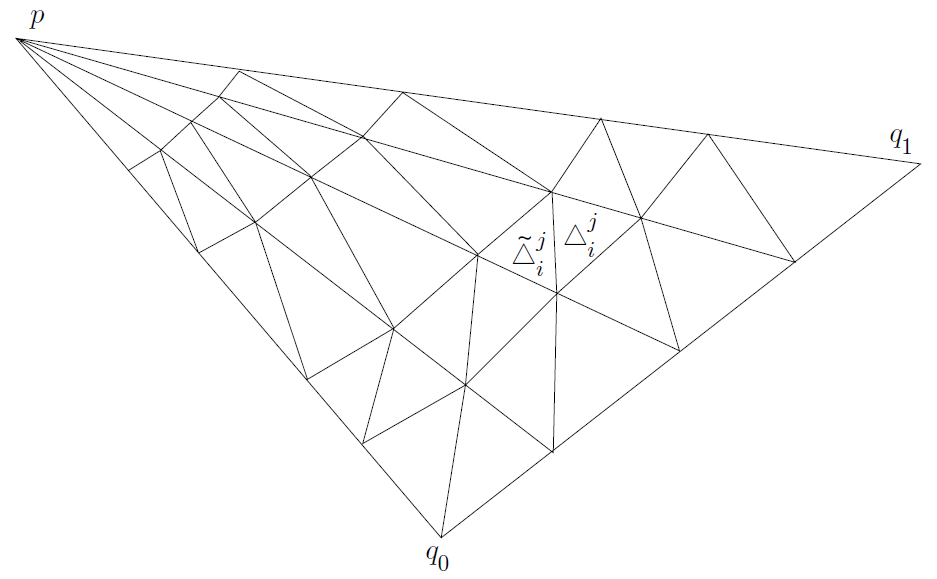}
    \caption{\cite{BridHaef} Alexandrov's patchwork: each of the small triangles are contained in CAT(0) spaces so they satisfy the angle/comparison angle inequality.}
\end{figure}

In the last section we showed that the universal covering map \(p\) is the local isometry $\exp$ when equipping \(\twiddle{X}_{x_0}\) with the metric induced by $\exp$.
This shows that if \((X,d)\) is locally CAT(0) then so is \((\twiddle{X}_{x_0},\twiddle{d}_s)\).
We also showed that \(\twiddle{X}_{x_0}\) is uniquely geodesic, so Alexandrov's patchwork shows us that all geodesic triangles in \(\twiddle{X}_{x_0}\) satisfy that their angles are no greater than the corresponding angles in the comparison triangle, proving that \(\twiddle{X}_{x_0}\) is a CAT(0) space. 
This finally proves the last part of the Cartan--Hadamard theorem.

\begin{bibdiv}
\begin{biblist}

\bib{BridHaef}{book}{
   author={Bridson, Martin R.},
   author={Haefliger, Andr\'{e}},
   title={Metric spaces of non-positive curvature},
   series={Grundlehren der mathematischen Wissenschaften [Fundamental
   Principles of Mathematical Sciences]},
   volume={319},
   publisher={Springer-Verlag, Berlin},
   date={1999},
   pages={xxii+643},
   isbn={3-540-64324-9},
   review={\MR{1744486}},
   doi={10.1007/978-3-662-12494-9},
}

\bib{AngleWeb}{unpublished}{
   title={Alexandrov angles visualized},
   note={\url{https://jpmacmanus.me/2020/10/02/alexandrov.html}},
}

\end{biblist}
\end{bibdiv}
    
\end{document}